\newtheorem{prob}{Problem}
\newtheorem{theorem}{Theorem}
\newtheorem{lemma}{Lemma}
\newtheorem{proposition}[theorem]{Proposition}
\theoremstyle{remark}
\newtheorem{remark}{Remark}
  \newcommand\R{\mathbb{R}}
 \newcommand\N{\mathbb{N}}  
 \newcommand{\eps}{\varepsilon}
 \newcommand\cA{\mathcal{A}}
\newcommand\cE{\mathcal{E}}
\newcommand\Eell{\mathcal{E}^\ell}
\newcommand\Nell{N_{\ell}}
\newcommand\dist{{\rm dist}}
\newcommand\loc{{\rm loc}}
\newcommand\sgn{{\rm sgn}}
\newcommand\sinc{{\rm sinc }}
\newcommand\bvert{\bigg\vert}
\DeclareMathOperator{\sech}{sech}
\DeclareMathOperator{\supp}{supp}
\title{
\vspace{-1in}
Existence of solitary waves in one dimensional peridynamics} % [Draft]}
\author{Robert L. Pego\footnote{Email address:\url{rpego@cmu.edu}} }
\author{Truong-Son Van\footnote{Email address:\url{sonv@andrew.cmu.edu}}}
\affil{Department of Mathematical Sciences and
\\Center for Nonlinear Analysis \\ Carnegie Mellon University, Pittsburgh, PA 15213, USA}
\begin{document}
\date{August 31, 2018}
\nocite{*}
\maketitle
\begin{abstract}
		We give a rigorous proof of existence for solitary waves of a peridynamics model in one space dimension recently investigated by Silling ({\it J. Mech. Phys. Solids} 96:121--132, 2016).
We adapt the variational framework developed by Friesecke and Wattis ({\it Comm. Math Phys.} 161:391--418, 1994) for the Fermi-Pasta-Ulam-Tsingou lattice equations
to treat a truncated problem which cuts off short-range interactions, then pass to the limit.
\end{abstract}

{\bf Keywords} Travelling wave, non-local elasticity, concentration compactness

\smallskip
{\bf Mathematics Subject Classification} 74J35, 35Q74

%\tableofcontents

\section{Introduction}
\subsection{Overview}
		Peridynamics is a relatively new nonlocal continuum model that was introduced in 2000 by Silling \cite{2000-S}.
A distinguishing feature of this model is that it consists of integro-differential equations that do not involve spatial derivatives.
For this reason, it has received considerable attention for its potential uses in modeling materials that have defects such as cracks.

Recently, Silling in \cite{S} investigated large-amplitude localized nonlinear waves---solitary waves---in a
peridynamics model in one space dimension. This model takes the form
%In one dimension, it has the following form
		\begin{equation}\label{eq:motion1}
				u_{tt} = \int_{-\delta}^\delta f(u(x+\xi,t)-u(x,t),\xi) \,d\xi.
		\end{equation}
Here $x$ represents a material (Lagrangian) coordinate, $u$ represents the displacement field, $\xi$  is called a \textit{bond} and describes the relative position between two material points, $\delta$ is called the \textit{horizon} and represents the maximum reference distance between interacting material points. The function $f$ is a pairwise bond force density that is determined by the material model and is required to satisfy the anti-symmetry condition
		\begin{equation}\label{eq:f_rule}
			f(-\eta,-\xi)=-f(\eta,\xi).
		\end{equation}
Taking $f$ in a form that models an elastic material that hardens in compression (see Remark 2 below),
Silling demonstrated the presence of solitary waves in numerical simulations of
\eqref{eq:motion1} and studied their form by approximate analytical methods.
		%Our interest is in interacting forces that have the form
		%\begin{equation}\label{eq:interacting}
		%    f(\eta,\xi) = (F\circ g)(\eta,\xi) \sgn(\xi), \hspace{0.5cm} 0<\vert \xi\vert\le \delta,
		%\end{equation}
		%where $g(\eta,\xi) = \frac{\eta}{\xi}$ and for some function $F$.

Our work, inspired by that of Friesecke and Wattis \cite{F-W}, aims to provide a general framework to rigorously prove the existence of solitary waves for \eqref{eq:motion1} by looking at the problem as a variational problem. In particular, we are looking for a solution in the form of a travelling wave,
		$$u(x,t) = q(x-ct),$$
		where $q:\R\to\R$ is monotone.
This provides a solution of \eqref{eq:motion1} if, for all real $z$,
		%By substituting this in \eqref{eq:motion1} and using \eqref{eq:f_rule}, we get
		\begin{equation}\label{eq:limiting}
				c^2q''(z)= \int_0^\delta \Bigl(f(q(z+\xi)-q(z),\xi) - f(q(z)-q(z-\xi),\xi)\Bigr)\,d\xi.
		\end{equation}
%    where $z=x-ct$.

		This equation is formally the Euler-Lagrange equation of the following general variational problem,
expressed in terms of a function $W$ (called the micropotential) which satisfies
%      $W$ is called the micropotential and
\begin{equation}
\partial_{\eta} W(\eta,\xi) = f(\eta,\xi).
\end{equation}

		%\begin{prob}
		%  Minimize
		%  \begin{equation}\label{func:T}
		%      T(q):=\frac{1}{2} \int_{\R} q'(z)^2 dz
		%  \end{equation}
		%  subject to a fixed potential energy
		%  \begin{equation}
		%      \cE(q) := \int_{\R}\int_0^\delta W\bigg(q(z+\xi)-q(z),\xi\bigg) d\xi dz = K
		%  \end{equation}
		%  where $K>0$, $W$ is called the micropotential and $\partial_{\eta} W(\eta,\xi) = f(\eta,\xi)$.
		%\end{prob}

		 %Since $f$ is as in \eqref{eq:interacting},
		 %$$W(\eta,\xi)=\vert \xi\vert \int_0^{\eta/\xi} F(s)ds =: \vert \xi \vert V(\frac{\eta}{\xi}). $$

		 %A natural generalization of the above problem is the following:

		 \begin{prob}\label{generalization}
			Minimize
			\begin{equation}\label{func:T2}
					T(q):=\frac{1}{2} \int_{\R} q'(z)^2 dz
			\end{equation}
			subject to fixed potential energy
			\begin{equation}
					\cE(q) := \int_{\R}\int_0^\delta W\bigg(q(z+\xi)-q(z),\xi\bigg) \,d\xi \, dz = K
			\end{equation}
			where $K>0$.
% and
%      $$W(\eta,\xi) = V\bigg(\frac{\eta}{m(\xi)}\bigg)k(\xi) $$
%      for some functions $m$ and $k$ that are chosen so that (\ref{eq:f_rule}) holds and $m(0)=k(0)=0$.
		\end{prob}

		{\bf Assumptions on micropotential.} In this paper, we assume
that the micropotential takes the scaling form
\begin{equation}\label{Wform}
W(\eta,\xi) = V\bigg(\frac{\eta}{m(\xi)}\bigg)k(\xi)
\end{equation}
for some functions $m$ and $k$ that are chosen so that (\ref{eq:f_rule}) holds and $m(0)=k(0)=0$,
where $V$ is $C^2$ and the following hold:

	 \begin{enumerate}[(H1)]
		 \item\label{A1} $V$ is non-increasing (or decreasing), convex and superquadratic on  $(-\infty,0]$ (or $[0,\infty)$) with $V(0)=V'(0)=0$, $V''(0)>0$, and
\[
V''(x)  \le  V''(0) + c_1\vert x\vert^{\gamma_1} + c_2\vert x\vert^{\gamma_2},
\]
		 where $c_1,c_2 \ge1$ and $0< \gamma_1\le \gamma_2$.
		 \item\label{A2}  $k:\R \to \R$ is even, $m:\R\to\R$ is odd, both $m$ and $k$ vanish at only $0$ and are non-decreasing on $[0,\infty)$, and satisfy
			$$\int_0^\delta \frac{\xi^2 k(\xi)}{m(\xi)^{\gamma_2+2}} \, d\xi<\infty
\quad\mbox{and}\quad
\int_0^\delta \frac{k(\xi)}{m(\xi)} \, d\xi <\infty.$$
	 \end{enumerate}

	 Under these assumptions, we will prove the following theorem.
     We note that %with these hypotheses, 
     under the long-wave approximation 
     $u(x+\xi,t)-2u(x,t)+u(x-\xi,t)\approx \xi^2 u_{xx}$,
     the linearization of equation \eqref{eq:motion1} takes the form
     $u_{tt} \approx c_0^2u_{xx}$, where 
     the ``speed of sound'' $c_0$ in this long-wave limit satisfies
     \begin{equation*}
         c_0^2 =  V''(0) \int_0^\delta \frac{ k(\xi) \xi^2}{m(\xi)^2} d\xi.
     \end{equation*}

	 %Under these assumptions, we will prove the following theorem.
	 \begin{theorem}\label{thm2}
		There exists a $K_1$ such that for $K>K_1$, there exists a constant $c> c_0$         and a function $q\in C^2(\R)$ so that $\cE(q)=K$ and $q$ solves equation \eqref{eq:limiting}.
		Furthermore, $q$ is increasing (decreasing) if $V$ is superquadratic on $[0,\infty)$ ($(-\infty,0]$).
		\end{theorem}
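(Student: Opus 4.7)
I would follow the Friesecke--Wattis strategy as signalled in the abstract: approach the travelling wave equation \eqref{eq:limiting} as the Euler--Lagrange equation of Problem~\ref{generalization}, obtain a minimizer by concentration--compactness, and read off the travelling wave from the resulting Lagrange multiplier. The central technical difficulty is that $W(\eta,\xi) = V(\eta/m(\xi))k(\xi)$ becomes singular as $\xi \to 0$ (since $m(\xi)\to 0$), so direct compactness arguments on $\cE$ are delicate. To handle this, I would truncate the bond integral to $\xi\in[\ell,\delta]$, producing a regularized functional $\Eell(q) := \int_\R\int_\ell^\delta W(q(z+\xi)-q(z),\xi)\,d\xi\,dz$, solve the truncated variational problem $I_\ell(K) := \inf\{T(q) : \Eell(q)=K\}$ for each $\ell > 0$, and then let $\ell\to 0$.

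For the truncated problem, I would take a minimizing sequence $(q_n)$ for $I_\ell(K)$, which is bounded in $\dot H^1(\R)$, and apply Lions's concentration--compactness principle to the densities $|q_n'|^2$. \emph{Vanishing} is ruled out using (H1): since $V$ is superquadratic with $V''(0)>0$ and controlled by the polynomial bound on $V''$, together with $k/m$-integrability from (H2), the energy $\Eell(q_n)$ is bounded above by an $L^\infty$-times-$L^2$ quantity in terms of $q_n'$, so vanishing of $q_n'$ on every ball would force $\Eell(q_n)\to 0$, contradicting $\Eell(q_n)=K$. To exclude \emph{dichotomy} I would prove the strict subadditivity $I_\ell(K) < I_\ell(\theta K) + I_\ell((1-\theta)K)$ for $\theta\in(0,1)$; the usual route is a scaling $q \mapsto q_\lambda(z)=\lambda q(z)$ and superquadraticity, which shows $I_\ell(K)/K$ is strictly decreasing in $K$ beyond some threshold $K_1$. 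Compactness then yields a minimizer $q_\ell$ with Lagrange multiplier $c_\ell^2>0$ satisfying the truncated Euler--Lagrange equation. Monotonicity of $q_\ell$ follows, as in Friesecke--Wattis, by replacing $q_\ell$ with its monotone rearrangement: the rearrangement decreases $T$ and increases $\cE$ whenever $V$ is monotone on the relevant half-line, forcing $q_\ell$ itself to be monotone.

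The crux is the passage $\ell\to 0$. I would first establish uniform bounds: a test function computation (for example, a rescaled Gaussian profile) shows $I_\ell(K)\le I_0(K) + o(1)$ with $I_0(K)<\tfrac12 c_0^2 K/V''(0)$ for $K>K_1$, which combined with the obvious $I_\ell(K)\ge I_{\ell'}(K)$ for $\ell\le \ell'$ gives $T(q_\ell)$ uniformly bounded and $c_\ell^2$ uniformly bounded away from both $0$ and $\infty$, with the strict inequality $\liminf c_\ell^2 > c_0^2$ coming from comparison with the linearized (sound-speed) problem. Using monotonicity of $q_\ell$ and translational normalization, Helly's selection theorem delivers pointwise and $\dot H^1$-weak limits $q_\ell \to q$ and $c_\ell\to c$. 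To verify $\cE(q)=K$ and that $q$ solves \eqref{eq:limiting} I would split the bond integral into $[0,\ell]$ and $[\ell,\delta]$: the inner part is controlled using the assumption (H2) together with the uniform bound $|q_\ell(z+\xi)-q_\ell(z)|\le \xi^{1/2}\|q_\ell'\|_{L^2}$ and the polynomial upper bound on $V''$ from (H1), so the contribution from $[0,\ell]$ is uniformly $O(\ell^{\alpha})$ for some $\alpha>0$; the outer part passes to the limit by dominated convergence.

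The main obstacle I expect is precisely this uniform control near the singularity $\xi=0$: the integrability conditions in (H2) are tailored to exactly absorb the worst-case growth $V''\sim |x|^{\gamma_2}$ when $x=(q(z+\xi)-q(z))/m(\xi)$ is evaluated for small $\xi$, so verifying these bounds carefully, and simultaneously ensuring $c_\ell^2 \to c^2 > c_0^2$ rather than degenerating to the sound speed (which would produce a trivial limit), is where the real work lies. Once this is done, standard bootstrapping in \eqref{eq:limiting} using that $q'\in L^2$ and the $C^2$ regularity of $V$ upgrades $q$ to $C^2(\R)$, and monotonicity of $q$ persists in the limit.
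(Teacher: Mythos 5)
Your overall architecture (truncate the bond integral, run concentration--compactness for the truncated problem, prove monotonicity, then let $\ell\to0$) is the same as the paper's, but two steps as written would fail. First, you work with $V$ itself, whereas (H1) gives convexity and superquadraticity only on one half-line. The scaling arguments behind strict subadditivity and, above all, your rearrangement step (``rearrangement decreases $T$ and increases $\cE$'') require comparing $V$ at the rearranged quotient $|q(z+\xi)-q(z)|/m(\xi)$ with $V$ at the signed quotient, which is simply not available under (H1). The paper handles this by first symmetrizing ($\overline V$ even, equal to $V$ on its superquadratic side), proving existence and monotonicity for the symmetrized problem (Lemma~\ref{lem:monotone}, which uses the strict convexity of $\overline V$), and only at the end observing that a monotone minimizer has one-signed differences, so that $q$ or $-q$ solves the original equation \eqref{eq:limiting}; this is exactly where the ``increasing/decreasing according to the superquadratic half-line'' clause of the theorem comes from, which your argument never produces. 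Second, your exclusion of vanishing is incorrect as stated: if $\sup_y\int_{B_R(y)}|q_n'|^2\to0$, the differences tend to zero uniformly and the potential energy is asymptotically \emph{bounded by its quadratic part}, giving $K\le \Nell V''(0)\,T^\ell_K$ rather than $\Eell(q_n)\to0$. One must therefore establish the strict energy inequality \eqref{energy}, $\Nell V''(0)T^\ell_K<K$, which holds only for $K$ large (Lemmas~\ref{lem:energy} and~\ref{reformulate2}); this, not subadditivity alone, is the true source of the threshold $K_1$.

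The most serious gap is the $\ell\to0$ passage, which you explicitly defer (``where the real work lies''). Monotonicity, translational normalization and Helly/weak compactness do not by themselves prevent the limit from being identically zero: the amplitudes of $q^\ell$ could tend to zero while their widths diverge. The paper's mechanism is the improved, $\ell$-uniform estimate of Lemma~\ref{lem:energy_revisit}, $K-C>\Nell V''(0)T^\ell_K$ with $C$ independent of $\ell$ (coming from the ramp test functions, whose corner contribution does not depend on $\ell$); inserting this gap into the growth bound $V''(x)\le V''(0)+c_1|x|^{\gamma_1}+c_2|x|^{\gamma_2}$ and the (H2) integrals yields the uniform amplitude bound $\sup_z\bigl(q^{\ell}(z+\delta)-q^{\ell}(z)\bigr)\ge C_*>0$ (Proposition~\ref{p:recenter}), which is what makes the re-centered weak limit non-trivial, keeps the Lagrange multipliers bounded away from $0$ and $\infty$, and ultimately gives $c>c_0$. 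Your sketch contains the raw ingredients (a strict energy inequality and $\limsup_{\ell\to0} I_\ell(K)\le I_0(K)$) but never converts them into such a lower bound on the wave amplitude. Finally, your auxiliary claim $I_\ell(K)\ge I_{\ell'}(K)$ for $\ell\le\ell'$ is backwards: the paper proves $T(q^{\ell_1})\le T(q^{\ell_2})$ for $\ell_1<\ell_2$, and it is this correct direction that gives the uniform bound on $T(q^\ell)$ you need.
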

\hfill
		\begin{remark}
            Actually, while the function $q$ in Theorem \ref{thm2} provides a travelling wave solution to \eqref{eq:motion1}, we do not know whether it provides a minimizer for Problem \ref{generalization}. Rather, it is a solution to a minimization problem with a symmetrized micropotential, which is described in the next subsection.
		\end{remark}

\begin{remark}
    The speed of the travelling wave is actually always faster than both the group and phase velocities at any wave number $\kappa$.
Upon linearizing the equation around $u(\cdot +\xi, \cdot) - u(\cdot,\cdot)=0$ and taking solutions of the form $u(x,t) = e^{i\kappa x - i\omega t}$, we get the dispersion relation
    \begin{equation}
        \omega^2 = \int_0^\delta V''(0) 4\sin^2\bigg(\frac{\xi \kappa}{2}\bigg) \frac{ k(\xi)}{m^2} \,d\xi =  \kappa^2 \int_0^\delta 
        \sinc^2\bigg(\frac{\xi \kappa}{2}\bigg) 
        \,d\gamma(\xi),
    \end{equation}
    where $\sinc\, x = \frac{\sin x}{x}$ and 
    $d\gamma(\xi) = V''(0) \frac{k(\xi)\xi^2}{m(\xi)^2}\, d\xi$. 
    Since $\sinc^2 x\le1$ and $\int_0^\delta d\gamma(\xi)=c_0^2$,
    this tells us that the phase velocity $\omega/\kappa$ satisfies
    $(\omega/\kappa)^2<c_0^2$.
%    \begin{equation*}
%        \bigg( \frac{\omega}{\kappa}\bigg)^2 = \int_0^\delta 
%        \sinc^2\bigg(\frac{\xi \kappa}{2}\bigg) 
%        V''(0) \frac{k(\xi) \xi^2}{m(\xi)^2} \, d\xi <c_0^2.
%    \end{equation*}
    Moreover, due to the identity
    \begin{equation*}
        \sinc^2(x) + \frac{x}{2}  \frac{d}{dx} \sinc^2(x) = \cos(x)\,\sinc(x),
    \end{equation*}
    we also compute that the group velocity (for $\omega$, $\kappa >0$) satisfies
%    \begin{equation}\label{phase_velo}
%        \frac{d\, \omega}{d\, \kappa} = \frac{\omega}{\kappa} + \frac{1}{2}\frac{\kappa}{\omega} \int_0^\delta S'\bigg( \frac{\xi \kappa}{2} \bigg) \frac{\xi \kappa}{2} \, d\gamma(\xi) <  c_0,
%    \end{equation}
%    where $S(x) = \sinc^2(x)$ and 
%    $d\gamma(\xi) = V''(0) \frac{k(\xi)\xi^2}{m(\xi)^2}\, d\xi$. 
%    To see this, we look at the ratio
    \begin{align*}
        \frac{d\, \omega}{d\, \kappa} &=  
        %\frac{\kappa}{\omega}   % \int_0^\delta \bigg( S\bigg(\frac{\xi \kappa}{2}\bigg) + \frac{1}{2}S'\bigg( \frac{\xi \kappa}{2} \bigg) \frac{\xi \kappa}{2} \bigg)\, d\gamma(\xi)  \\
          \frac{\kappa}{\omega} \int_0^\delta \cos \bigg(\frac{\xi\kappa}{2}\bigg) \sinc\bigg(\frac{\xi\kappa}{2}\bigg) \, d\gamma(\xi) < \frac{\kappa}{\omega}
          \sqrt{\frac{\omega^2}{\kappa^2}} \sqrt{\int_0^\delta d\gamma(\xi)}= c_0,
    \end{align*}
%    The last equality above comes from the fact 
%    \begin{equation*}
%        \sinc^2(x) + \frac{1}{2}(\frac{d}{dx} \sinc^2(x)) x = \cos(x)\,\sinc(x).
%    \end{equation*}
    by the Cauchy-Schwarz inequality.
    \end{remark}

\begin{remark}\label{r:Silling}
		The model that we study is a direct generalization of the one that was studied by Silling in \cite{S}. More specifically, the model investigated by Silling has the micro-potential
		\begin{equation}\label{Silling-micro}
		W(\eta,\xi) = \begin{cases}
				\frac{ \eta^2}{2\vert\xi\vert}(1-\frac{\eta}{3\xi}), \hspace{0.5in} \text{ if } \frac{\eta}\xi<0, \\
				\frac{ \eta^2}{2\vert\xi\vert} , \hspace{1in} \text{otherwise.}
		\end{cases}
\end{equation}
		This fits our framework with
		$$V(s) = \begin{cases}
				\frac12{ s^2}(1-\frac13{s}), \hspace{0.5in} \text{ if } s<0, \\
				\frac12{ s^2} , \hspace{1in} \text{otherwise,}
		\end{cases}$$
		$$m(\xi) = \xi, \qquad k(\xi) = \vert \xi\vert,$$
		$$\gamma_1=\gamma_2=1, \qquad c_1=c_2 = \frac{1}{2}.$$
\end{remark}

		We point out that while our general framework is good for showing the existence of travelling waves given large enough potential energy, Silling's model has a structure that allows us to prove the following result, showing that travelling waves exist even in the case of low potential energy.

		\begin{theorem}\label{thm:low-energy}
				There exists a monotone travelling wave solution to equation \eqref{eq:motion1} satisfying $\cE(q)=K$ for every $K>0$,
				where
				\begin{equation}\label{eq:interacting}
						f(\eta,\xi) = F(\eta/\xi) \sgn(\xi), \hspace{0.5cm} 0<\vert \xi\vert\le \delta,
				\end{equation}
				with %$g(\eta,\xi) = \frac{\eta}{\xi}$ and
				$$F(s) = \begin{cases}
								s - \frac{s^2}{2}, \hspace{0.7in} s< 0,\\
								s, \hspace{1in} \text{otherwise}.
				\end{cases}$$
		\end{theorem}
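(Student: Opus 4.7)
Since Silling's model fits the framework of Theorem \ref{thm2} as verified in Remark \ref{r:Silling}, a travelling wave already exists for every $K>K_1$. My plan is to cover the remaining range $0<K\le K_1$ by exploiting a scaling property special to Silling's force $f(\eta,\xi)=F(\eta/\xi)\sgn(\xi)$, which depends on its arguments only through the strain $\eta/\xi$.

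The crucial observation is that the transformation $q_\lambda(z):=\lambda q(z/\lambda)$ preserves strains along rescaled arguments: the strain of $q_\lambda$ at $(z,\xi)$ equals the strain of $q$ at $(z/\lambda,\xi/\lambda)$. A direct change of variables then confirms that if $q$ solves equation \eqref{eq:limiting} with speed $c$ at horizon $\delta$, then $q_\lambda$ solves it with speed $\lambda c$ at horizon $\lambda\delta$, with $T(q_\lambda)=\lambda\, T(q)$ and $\cE_{\lambda\delta}(q_\lambda)=\lambda^3\,\cE_\delta(q)$ (subscripts indicating the horizon used in defining the potential energy). This is not a symmetry of the fixed-$\delta$ problem, but rather a bridge between fixed-horizon problems at different energies.

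Given a target energy $K>0$ at the fixed horizon $\delta$, my plan is to apply Theorem \ref{thm2} at the rescaled horizon $\lambda\delta$ and energy $\lambda^3 K$; provided $\lambda^3 K>K_1(\lambda\delta)$, this yields a solution $Q$ at horizon $\lambda\delta$ with $\cE_{\lambda\delta}(Q)=\lambda^3 K$, and the pull-back $q(z):=\lambda^{-1}Q(\lambda z)$ is then a travelling wave at horizon $\delta$ with $\cE_\delta(q)=K$. The construction thus succeeds as soon as $\inf_{\lambda>0} K_1(\lambda\delta)/\lambda^3=0$.

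The main obstacle is establishing precisely this infimum condition, which requires tracking how the threshold $K_1$ depends on the horizon through the concentration-compactness argument underlying Theorem \ref{thm2}. I expect $K_1(\delta')$ to grow strictly sub-cubically in $\delta'$, so that taking $\lambda$ large closes the argument; the key input is the quadratic leading-order behavior of $V$ near zero, which sets the natural energy scale at a subcubic rate. If that bookkeeping proves too delicate, a fallback is to construct small-amplitude waves directly from the long-wave limit: inserting $q(z)=\epsilon Q(\epsilon z)$ into \eqref{eq:limiting} and using the specific cubic structure of Silling's $V$ yields at leading order a KdV equation for $Q'$ whose $\sech^2$ soliton can then be upgraded to a genuine travelling wave via a contraction-mapping argument in the spirit of Friesecke-Pego.
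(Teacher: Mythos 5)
Your scaling identities are correct: for Silling's $W(\eta,\xi)=V(\eta/\xi)|\xi|$ the map $q\mapsto q_\lambda:=\lambda q(\cdot/\lambda)$ preserves strains, gives $T(q_\lambda)=\lambda T(q)$ and $\cE_{\lambda\delta}(q_\lambda)=\lambda^3\cE_{\delta}(q)$, and sends solutions of \eqref{eq:limiting} with speed $c$ and horizon $\delta$ to solutions with speed $\lambda c$ and horizon $\lambda\delta$. But this is exactly why your main route has a genuine gap: the scaling is an \emph{exact symmetry} of Silling's model, so the problem at horizon $\lambda\delta$ with energy $\lambda^3K$ is equivalent to the problem at horizon $\delta$ with energy $K$. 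Hence the sharp existence threshold transforms exactly cubically, $K_1^{\rm sharp}(\lambda\delta)=\lambda^3K_1^{\rm sharp}(\delta)$, and your key condition $\inf_{\lambda>0}K_1(\lambda\delta)/\lambda^3=0$ is equivalent to $K_1^{\rm sharp}(\delta)=0$, i.e.\ to the very statement you are trying to prove; the scaling detour cannot lower the threshold. The same holds for the constructive threshold of Theorem \ref{thm2}: it comes from Lemma \ref{lem:energy} and Lemma \ref{lem:energy_revisit} with a \emph{fixed} strain amplitude $\Lambda$ in the test function $q_{\Lambda,L}$, and for $m(\xi)=\xi$, $k(\xi)=|\xi|$ one finds $\Nell\sim\delta^2/2$ and $L_0\sim{\rm const}\cdot\delta$, so $K_0(\delta)\sim{\rm const}(\Lambda)\,\delta^3$ --- exactly cubic, making your infimum a positive constant rather than zero. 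Your heuristic that the quadratic behavior of $V$ near $0$ forces sub-cubic growth is backwards: the quadratic part yields no superquadratic gain at all; the gain comes from the cubic term of Silling's potential and is itself scale-covariant. To make a constructive $K_1(\delta')$ sub-cubic you would have to let the test amplitude tend to zero with $\delta'$ and quantify the cubic gain of long, small-amplitude profiles \emph{uniformly in the cutoff} $\ell$ (the $\ell$-independent constant $C$ of Lemma \ref{lem:energy_revisit} is what drives the non-triviality of the $\ell\to0$ limit in Proposition \ref{p:recenter}); after undoing the scaling this is precisely the low-energy estimate at the original horizon, so the scaling buys nothing.

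That quantitative small-amplitude estimate is the actual content of the paper's proof of Theorem \ref{thm:low-energy}: in Section \ref{low-energy} the energy inequality \eqref{energy}, in its $\ell$-uniform improved form, is verified for \emph{every} $K>0$ by an explicit expansion with the profiles $q_{\Lambda,\beta}(z)=\Lambda\beta^{-1/2}\tanh(\beta z)$, for which the cubic term contributes a positive excess of order $\Lambda^3\beta^{1/2}(\delta^2-\ell^2)$, independent of $\ell$ for fixed small $\beta$, after which the concentration-compactness and truncation-limit machinery of the earlier sections applies unchanged. Your fallback (a KdV long-wave limit plus a Friesecke--Pego type contraction argument) is a genuinely different method and might well work, but it is entirely unexecuted here; as written, the proposal does not prove the theorem.
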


Concerning the asymptotic behavior of $q(z)$ for large $|z|$, we have only a little information.
In \cite{S}, by using Taylor's approximation, Silling derived an approximating ODE to \eqref{eq:limiting} and found an explicit solution to that ODE whose derivative has compact support. While this is numerically a good approximation, exact travelling waves for \eqref{eq:motion1} do {\it not} have compactly supported derivatives. This is proved in section 6 under a rather general assumption.

		Silling's model is the peridynamics counterpart of the discrete spring model that was studied by Fermi, Pasta, Ulam and Tsingou \cite{fermi1955studies}. In fact, Friesecke and Wattis \cite{F-W} rigorously showed that the discrete spring model possesses travelling wave solutions.
		%and
		%\begin{equation*}
		%    F(s):=\begin{cases}
		%        cs-\frac{cs^2}{s_0}, & s<0\\
		%        cs, & s\ge 0
		%    \end{cases}
		%\end{equation*}
		%where $c$ and $s_0$ are positive constants that describe properties of the material. For simplicity, we let $c=s_0=1$.
		%This particular material model is the peridynamics counterpart the discrete spring model that was studied by Fermi, Pasta and Ulam \cite{fermi1955studies}. In fact, in 1994, Friesecke and Wattis \cite{F-W} rigorously showed that the discrete spring model possesses travelling wave solutions.
        While our result was inspired by that in \cite{F-W}, difficulties arose when we directly applied the method from \cite{F-W} due to the lack of control over the weak derivatives of functions in the Sobolev space $W^{1,2}_{\loc}(\R)$. Unlike the setting in \cite{F-W}, in which the authors only need to use the finiteness of kinetic energy to ensure the continuity of $q(\cdot + 1)-q(\cdot)$, we do not have the difference quotients $(q(\cdot+\xi)-q(\cdot) )/m(\xi)$ uniformly bounded as $\xi$ approaches zero. To be more specific, when we tried to apply the method from \cite{F-W} directly, we were not able to see how an analog of Lemma \ref{reformulate2}, a reformulation the nonvanishing condition of the minimizing sequence, would hold. 
        
        To overcome this problem, we find the existence of travelling waves in an approximate problem obtained by cutting off short bonds, and then prove that the approximate solutions converge to a solution of the main problem. The ability to prove such convergence relies heavily on an improved potential energy estimate (Lemma \ref{lem:energy_revisit}) and the monotonicity of the solutions of the approximate problem.

        We also note that the truncation near zero is not related to the assumptions on $m$ and $k$. It is mainly to deal with the lack of uniform boundedness of the difference quotients.

It is plausible that the existence of solitary waves can be obtained with 
different or more general structural assumptions on the micropotential from those we impose here.
We have chosen to treat micropotentials in the scaling form of \eqref{Wform} because they can conveniently represent 
a variety of typical peridynamic force densities,
such as arise, e.g., by finite-horizon truncation of energies involving fractional derivatives.

		\subsection{Symmetrization}\label{solitary-waves}
		%Assume now that the travelling wave goes from left to right so that $c>0$ and for $x_1>x_2$
		%$$u(x_1)\le u(x_2).$$
		%\begin{figure}[ht]
		%\centering
		%\input{monotone.tex}
		%\caption{Solitary wave}
		%\end{figure}

		In order to prove existence of solitary waves, we will symmetrize our potential $W$ by replacing $V$ with a function $\overline{V}$ that is even and superquadratic on $\R$. More specifically, let $I$ be the half line on which $V$ is superquadratic (i.e. $(-\infty,0]$ or $[0,\infty)$). Define
		$$\overline{V}(x) := \begin{cases}
				V(x), \qquad x\in I \\
				V(-x), \qquad -x \in I
		\end{cases}. $$

		Define then $$\overline{W}(\eta,\xi) : = \overline{V}(\frac{\eta}{m(\xi)})k(\xi).$$

		It turns out that problem \ref{generalization} with the potential $\overline{W}$ will have a minimizer that is monotone. Furthermore, if $q$ is a minimizer of this problem, then $-q$ is also a minimizer (due to the symmetric nature of $\overline{V}$).

		Once we prove the existence of minimizers for this symmetrized problem, we will see that if the original $V$ is superquadratic on $[0,\infty)$ ($(-\infty,0]$) then the increasing (decreasing) minimizer of the symmetrized problem will be a solitary wave to the original problem \ref{generalization}.

		We note that the solitary wave found here may not be the minimizer of the original variational problem 1.

        \begin{remark}
            It is not necessary to symmetrize the potential if $V$ is already strictly convex. The only place that this is utilized heavily is Lemma \ref{lem:monotone}, where we prove monotonicity of minimizers by exploiting the strict convexity of $\overline{V}$. This is not necessary to prove existence of the minimizer, where we only need to exploit the one-sided superquadraticity of $V$.
        \end{remark}

		\subsection{Truncation} \label{sub:existence}

%    Due to some technical limitation of our approach, we cannot to directly prove existence of minimizers for the symmetrized problem introduced subsection in \ref{solitary-waves}. This is due to difficulties in controlling the $L^\infty$ norm of the derivatives of $W^{1,2}_{\loc}$ functions. We, therefore, introduce the truncated problem
Next, we introduce the following truncated problem:
\\
		\begin{prob}\label{approximate}
		 Minimize
		 \begin{equation*}
				 T(q):=\frac{1}{2} \int_{\R} q'(z)^2 dz
		 \end{equation*}
		 subject to a fixed potential energy
         \begin{equation}\label{potential_energy}
				 \Eell(q) := \int_{\R}\int_\ell^\delta \overline{W}\bigg(q(z+\xi)-q(z),\xi\bigg) d\xi dz = K
		 \end{equation}
		 where $\ell\in(0,\delta)$, $K>0$ and $\overline{W}$ is defined above.
	 \end{prob}

		It turns out that we can solve this problem by adapting the technique of \cite{F-W}:
\\
				\begin{theorem}\label{thm1}
						There exists a $K_0$ such that for all $K>K_0$, there exists  $q^\ell\in C^2(\R)$ so that $\Eell(q^\ell)=K$ and $q^\ell$ solves Problem \ref{approximate}. Furthermore, it solves the Euler-Lagrange equation
						\begin{equation}\label{eq:travelling-wave-cutoff}
								c^2(q^\ell)''(x)   = \int_{\ell}^{\delta} \big[f(q^\ell(x+\xi)-q^\ell(x),\xi)-f(q^\ell(x)-q^\ell(x-\xi),\xi)\big]d\xi
								\end{equation}
								where $c^2>0$ is the inverse of the Lagrange multiplier.
				\end{theorem}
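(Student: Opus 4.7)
The plan is to adapt the variational concentration--compactness framework of Friesecke and Wattis \cite{F-W} to the cutoff functional $\Eell$. Set $I^\ell_K := \inf\{ T(q) : q'\in L^2(\R),\ \Eell(q)=K\}$ and pick a minimizing sequence $(q_n)$. Both $T$ and $\Eell$ are invariant under vertical shifts $q\mapsto q+b$ and horizontal translations $q\mapsto q(\cdot+a)$, so I may normalize $q_n(0)=0$ and retranslate later to center the mass of $(q_n')^2$. Since $T(q_n)$ is bounded, $(q_n')$ is bounded in $L^2(\R)$ and, using $q_n(0)=0$, $q_n$ is bounded in $H^1_\loc(\R)$.

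The three-part alternative of Lions is then applied to the densities $\rho_n := (q_n'(z))^2$. To rule out vanishing, observe that if $\sup_{y\in\R}\int_{y-R}^{y+R}\rho_n \to 0$ for every fixed $R$, then by Cauchy--Schwarz $|q_n(z+\xi)-q_n(z)|\le \sqrt{\xi}\,\bigl(\int_z^{z+\xi}\rho_n\bigr)^{1/2}$ tends to zero uniformly in $z$ for $\xi\in[\ell,\delta]$. The cutoff $\xi\ge\ell$ gives $m(\xi)\ge m(\ell)>0$, so the argument of $\overline V$ is uniformly small; combined with $\overline V(0)=\overline V'(0)=0$, the growth bound on $V''$ in (H1), and the integrability conditions in (H2), this forces $\Eell(q_n)\to 0$, contradicting $\Eell(q_n)=K$. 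This is the step where the near-zero cutoff is essential, and where the direct attack on Problem \ref{generalization} breaks down.

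To rule out dichotomy, the standard device is to prove the strict subadditivity $I^\ell_{K_1+K_2} < I^\ell_{K_1}+I^\ell_{K_2}$ for $K_1,K_2>0$ with $K_1+K_2=K>K_0$; this is where the threshold $K_0$ enters. Because $\overline V$ is symmetric, convex, and superquadratic, the rescaling $q\mapsto \lambda q$ gives $T(\lambda q)=\lambda^2 T(q)$ but $\Eell(\lambda q)>\lambda^2\Eell(q)$ for $\lambda>1$, which forces $K\mapsto I^\ell_K/K$ to be strictly decreasing once $K$ exceeds a threshold where nontrivial superquadratic behavior becomes accessible; strict subadditivity then follows. I expect this subadditivity step to be the main technical difficulty, particularly in pinning down the explicit $K_0$ and in decomposing $q_n$ into pieces whose supports are separated by more than $\delta$ so that cross interaction terms in $\Eell$ vanish exactly (the finite interaction range is a genuine simplification here compared to the FPU setting).

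With both vanishing and dichotomy excluded, concentration holds after translation: there are $a_n$ and a subsequence with $q_n(\cdot+a_n)\to q^\ell$ locally uniformly and $q_n'(\cdot+a_n)\to (q^\ell)'$ strongly in $L^2(\R)$ (tightness from concentration promotes weak $L^2$ convergence to strong). Passing to the limit yields $\Eell(q^\ell)=K$ and $T(q^\ell)=I^\ell_K$, so $q^\ell$ is a minimizer. The Lagrange multiplier theorem then produces \eqref{eq:travelling-wave-cutoff} with multiplier $1/c^2$ positive since $T(q^\ell)$ and $\Eell(q^\ell)$ are both strictly positive at the minimizer. Finally, $q^\ell\in H^1(\R)\subset C(\R)$, so the shifts $q^\ell(x\pm\xi)$ are continuous in $x$; the integrand of \eqref{eq:travelling-wave-cutoff} is continuous and bounded for $x\in\R$ and $\xi\in[\ell,\delta]$, and dominated convergence shows the right-hand side is continuous in $x$. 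Hence $(q^\ell)''\in C(\R)$ and $q^\ell\in C^2(\R)$.
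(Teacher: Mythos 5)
Your vanishing argument has a genuine gap. You claim that if $\sup_y\int_{B_R(y)}(q_n')^2\to 0$ then the uniform smallness of the difference quotients forces $\Eell(q_n)\to 0$. That is false: pointwise smallness of the integrand does not control the integral over all of $\R$. For instance $q_n(z)=z/n$ on $[0,n^3]$ has difference quotients $\xi/n\to 0$ but $\Eell(q_n)\approx\tfrac12 V''(0)\Nell\, n\to\infty$ (and $T(q_n)=n/2\to\infty$). What vanishing actually gives, via Taylor expansion, is $\Eell(q_n)=V''(0)\Nell\, T(q_n)+o(T(q_n))$; since $\Eell(q_n)=K$, this forces $\liminf T(q_n)\ge K/(V''(0)\Nell)$, and so vanishing can only be excluded if one already knows the \emph{energy inequality} $T^\ell_K< K/(V''(0)\Nell)$. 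This is where the threshold $K_0$ enters (compare Lemma~\ref{ineq:liminf}, Lemma~\ref{reformulate2}, Lemma~\ref{lem:energy}), not in the dichotomy step as you suggest. Consequently you have also misattributed the role of $K_0$: in the paper's scheme (and in Friesecke--Wattis), $K_0$ is chosen so that (E) holds, which excludes vanishing, and then subadditivity for $K>2K_0$ is derived using (E) together with the uniform lower bound on $\Vert[q]_\ell\Vert_\infty$ it provides (Lemma~\ref{lem:subadditivity}).

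A secondary, more structural difference: you apply Lions's trichotomy to the kinetic density $\rho_n=(q_n')^2$, whose total mass $T(q_n)$ is merely convergent rather than fixed, whereas the paper applies it to the potential-energy density $U^k$, which has fixed mass $K$ by the constraint. Applying concentration--compactness to the constraint density is cleaner: in the dichotomy case one then gets an immediate bookkeeping of the split potential energies $\alpha$ and $K-\alpha$ (needed to invoke subadditivity), and the splitting construction truncates $q'$ on an annulus where the constraint mass is small, exactly so that $\Eell$ is nearly additive across the split. Your version would need a separate argument to convert a kinetic-energy dichotomy into a potential-energy split. The remaining steps (Lagrange multiplier, sign of the multiplier from positivity, and $C^2$ regularity by bootstrapping continuity of the right-hand side over the compact range $\xi\in[\ell,\delta]$) match the paper.
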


		We will then extract a limit
		$$q^\ell \to q$$
		along some subsequence and show that $q$ is non-trivial and solves \eqref{eq:limiting}.

        \subsection{Strategy and plan of the paper}\label{strategy}
				%We will introduce necessary notations and state the main results of the paper in the next section.
				To summarize, the strategy to establish the existence of a solution to (\ref{eq:limiting}) is the following:

						\begin{enumerate}
								\item Prove the existence of a minimizer $q^\ell$ to Problem \ref{approximate},
								\item Show that $q^\ell$ is monotone,
								%\item Show $q:=\lim_{\ell\to 0} q^\ell$ is non-trivial and is a minimizer of problem \ref{generalization} with the symmetrized potential $\overline{W}$
                                \item Show that as $\ell\to 0$, a subsequence of the $q^\ell$ converges to a minimizer of Problem \ref{generalization} with the symmetrized potential $\overline{W}$.
								\item Conclude that this minimizer of the problem associated with $\overline{W}$ is a function that satisfies (\ref{eq:limiting}).
						\end{enumerate}

				The existence of minimizers to the truncated Problem \ref{approximate} is proven in section \ref{existence}. Section \ref{properties} derives various properties for the minimizers and completes the proof of Theorem~\ref{thm1}.
The analysis in sections~\ref{existence} and \ref{properties} is similar to that in \cite{F-W}, so
readers who are familiar with that can skip the details in these sections without missing any major concept.
The existence of travelling waves in the original problem will be proven in Section \ref{limiting}. Theorem~\ref{thm:low-energy} is shown in Section \ref{low-energy}. Finally, we discuss compactness of the travelling waves' derivatives in Section \ref{discussion}.

%                The analysis in sections~\ref{existence} and \ref{properties} is similar to that in \cite{F-W} and depends on one fixed $\ell$. Therefore, we omit writing the $\ell$ dependence sometimes in these sections to make the notations less cluttered. 
%Readers who are familiar with the work \cite{F-W} can skip the details in these sections without missing out any major concept.
                
%This is the main new difficulty of the paper and a new estimate, Lemma~\ref{lem:energy_revisit}, is needed.

\section{Existence of a minimizer to the truncated problem} \label{existence}
We establish Theorem~\ref{thm1} in this section, which deals with the truncated Problem~\ref{approximate}. 
Since we only deal with the symmetrized potential, we will write $\overline{V}$ as $V$ in this section, unless specifically stated otherwise.
We also note that our proof follows almost exactly as in \cite{F-W} with some modifications needed due to the fact that our potential is an integral, not a function as in \cite{F-W}. 

\subsection{Notations}

%In order to have a clear presentation, we introduce some tools that will be used in the analysis. The reader can skip this part for now and refer to it whenever need to look up for some technical tool.

		We will be working on the following Hilbert space
		$$H:=\bigg\{q\in W^{1,2}_\loc(\R): \Vert q'\Vert_{L^2(\R)}<\infty, q(0)=0\bigg\} $$
		where the inner product is given by $\langle q,p\rangle = \int_{\R} q'p'$ and $\Vert q\Vert = \Vert q' \Vert_{L^2(\R)}$.
		%We will also assume that $\delta-\ell>\frac{\delta}{2}>>1$.
        For convenience, we define 
        \begin{equation}\label{kinetic_energy_K}
            T^\ell_K:= \inf_{\cA^\ell_K} T, \qquad   \cA^\ell_K:= \{q\in H: U(q(\cdot)) \in L^1(\R), \Eell(q)=K\}.
    \end{equation}
    The analysis in this section and the next is performed for each fixed $\ell \in (0,\delta)$. Therefore, for convenience we suppress the explicit dependence on $\ell$ frequently 
%omit writing the $\ell$ dependence sometimes 
in these sections to make the notations less cluttered. 
%Readers who are familiar with the work \cite{F-W} can skip the details in these sections without missing out any major concept.
    %In this section and the next, we will suppress the explicit dependence on $\ell$, however. 
                The results in sections~\ref{limiting} and \ref{low-energy} require close attention to different values of $\ell$ so we will explicitly write the $\ell$ dependence there in all of the calculations. 

It is also convenient to define the piecewise linear function $q_{\Lambda,L}$ by
		$$q_{\Lambda, L}(z)=\begin{cases}
				0, & z\le 0,\\
				\Lambda z, &z\in [0,L],\\
				\Lambda L, & z>L.
		\end{cases}$$
%and
%    $$[q_k(x)]_{\ell}=\sup_{\xi\ge\ell}\bigg\vert \frac{q^k(x+\xi)-q^k(x)}{m(\xi)}\bigg\vert.$$
%\begin{remark}
%    For each $\ell$, there exists a $C$ such that $\Vert [q_k]_{\ell}\Vert_\infty\le C$.
%\end{remark}
%
%    \begin{equation*}
%        U^{\ell}(q;z) := \int_{{\ell}}^\delta V\bigg(\frac{q(z+\xi)-q(z)}{m(\xi)}\bigg)   k(\xi) d\xi,
%    \end{equation*}
%
%
%    \begin{equation*}
%        \Eell(q) :=\int_{\R}\int_{\ell}^\delta V\bigg(\frac{q(z+\xi)-q(z)}{m(\xi)}\bigg)k(\xi) d\xi \, dz,
%    \end{equation*}
%
%    $$\cE(q):= \cE^0(q),$$
%
   % \begin{equation*}
   %     \cA_K:= \{q\in H: U(q(\cdot)) \in L^1(\R), \Eell(q)=K\},
   % \end{equation*}
   % \begin{equation*}
   %     T_K:= \inf_{\cA_K} T.
   % \end{equation*}

%---------------------------- CONCENTRATION-COMPACTNESS ------------------------
\subsection{Analysis}

Let us start with the concentration-compactness lemma.

\begin{lemma}[concentration-compactness]\label{concentration-compactness}
	Let $\{q^k\}$ be a sequence in $W^{1,2}_{loc}(\R)$ such that there exists a $C>0$ so
	$$\sup_k \Vert (q^k)'\Vert_{L^2(\R)}\le C$$ and that
	\begin{equation}
		\int\limits_\R U^k \equiv K
	\end{equation}
	where $K>0$ and
	\begin{equation} \label{d:Uk}
		U^k(z)=U(q^k;z):= \int_{\ell}^\delta V\big(\frac{q^k(z+\xi)-q^k(z)}{m(\xi)}\big)k(\xi) \, d\xi.
	\end{equation}
	Then, up to a subsequence, $q^k$ satisfies exactly one of the following
	\begin{enumerate}[i.]
		\item (compactness) There exists $y_k \in \R$ such that $U^k(\cdot + y_k)$ is tight, i.e., $\forall \eps>0$, $\exists R<\infty$ such that
		$$\int\limits_{\R\backslash B_R(0)} U^k(\cdot + y_k)\le \eps$$ for all $k$.

		\item (vanishing) $$\lim_{k\to\infty}\sup_{y\in\R} \int\limits_{B_R(y)} U^k(\cdot) = 0$$ for all $R<\infty$.

		\item (splitting) There exists $\alpha\in (0,K)$ such that $\forall \eps>0$, $\exists k_0$
		such that $\forall k\ge k_0$,
		$\exists\,q_1^k, q_2^k\in W^{1,2}_{loc}(\R)$, $\Vert (q_i^k)'\Vert_{L^2(\R)}<\infty$
		and the following is true:
			 $$\Vert U^k - (U^k_1+U^k_2)\Vert_{L^1(\R)} \le \eps,$$
		$$\bigg\vert \int\limits_\R U_1^k -\alpha\bigg\vert \le \eps,$$
		$$\bigg\vert \int\limits_\R U_2^k -(K-\alpha)\bigg\vert \le \eps,$$
		and
		$$\frac{1}{2}\int\limits_\R ((q^k)')^2 -\frac{1}{2}\int\limits_\R ((q^k_1)')^2 -\frac{1}{2}\int\limits_\R ((q^k_2)')^2 \ge 0$$
		with $$\dist(\supp((q_1^k)'),\supp((q_2^k)'))\to \infty$$ as $k\to\infty$.
		Here $U_j^k(x):= U(q_j^k;x)$ where $j=1,2$.
	\end{enumerate}
\end{lemma}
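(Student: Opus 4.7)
The plan is to apply the P.-L.\ Lions concentration-compactness principle directly to the non-negative $L^1$ densities $U^k$, each having fixed total mass $K$. Introduce the concentration functions $Q^k(t) := \sup_{y \in \R} \int_{B_t(y)} U^k$, which are non-decreasing in $t$ and uniformly bounded by $K$. By Helly's selection theorem, up to a subsequence $Q^k \to Q$ pointwise for some non-decreasing $Q \le K$; set $\alpha := \lim_{t\to\infty} Q(t) \in [0,K]$. The extreme cases $\alpha = 0$ and $\alpha = K$ give the vanishing and compactness alternatives directly: $\alpha = 0$ means $\lim_k Q^k(t) = 0$ for each fixed $t$, which is (ii); and $\alpha = K$ yields, via choosing $y_k$ that nearly maximizes the concentration on a fixed-size ball, the tightness in (i).

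For the dichotomy case $\alpha \in (0,K)$, a further subsequence extraction yields scales $R_k \to \infty$ and centers $y_k \in \R$ with
\begin{equation*}
\int_{B_{R_k}(y_k)} U^k \to \alpha, \qquad \int_{B_{6R_k}(y_k) \setminus B_{R_k}(y_k)} U^k \to 0.
\end{equation*}
Fix a smooth $[0,1]$-valued cutoff $\chi$ with $\chi \equiv 1$ on $[-1,1]$ and $\chi \equiv 0$ outside $[-2,2]$, and set
\begin{equation*}
\phi_1^k(z) := \chi\bigl((z-y_k)/R_k\bigr), \qquad \phi_2^k(z) := 1 - \chi\bigl((z-y_k)/(3R_k)\bigr),
\end{equation*}
then define $(q_i^k)' := \phi_i^k (q^k)'$ with $q_i^k(0) := 0$. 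The supports $\supp\phi_1^k \subset \overline{B_{2R_k}(y_k)}$ and $\supp\phi_2^k \subset \R \setminus B_{3R_k}(y_k)$ are disjoint, separated by distance at least $R_k \to \infty$, and since $\phi_1^k,\phi_2^k \in [0,1]$ with disjoint supports, $(\phi_1^k)^2 + (\phi_2^k)^2 \le 1$ pointwise, yielding the kinetic-energy inequality at once.

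The main obstacle is the $L^1$ bound $\|U^k - U_1^k - U_2^k\|_{L^1} \le \eps$ (and the analogous mass estimates for $\int U_i^k$). Partition $\R$ according to the behavior of the cutoffs on the interval $[z, z+\delta]$, which is where $U^k(z)$ ``sees'' $q^k$. On the inner core $\{z : [z,z+\delta] \subset B_{R_k}(y_k)\}$ one has $\phi_1^k \equiv 1$ and $\phi_2^k \equiv 0$, hence $q_1^k(z+\xi) - q_1^k(z) = q^k(z+\xi) - q^k(z)$ and $q_2^k$ is constant there, so $U_1^k(z) = U^k(z)$ and $U_2^k(z) = 0$; symmetrically on the outer core $\{z : [z,z+\delta] \cap B_{6R_k}(y_k) = \emptyset\}$. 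The integrand $U^k - U_1^k - U_2^k$ therefore vanishes off a transition set $A_k$, contained in a $\delta$-neighborhood of the annulus $B_{6R_k}(y_k) \setminus B_{R_k}(y_k)$.

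The principal technical difficulty is then to estimate $\int_{A_k}(U^k + U_1^k + U_2^k)$. The first term is $o(1)$ by the annular smallness. For $U_i^k$, I would use the growth assumption (H1) on $V$ to bound it pointwise by a polynomial of degree at most $\gamma_2 + 2$ in $|q_i^k(z+\xi) - q_i^k(z)|/m(\xi)$; together with Cauchy--Schwarz $|q_i^k(z+\xi) - q_i^k(z)| \le \sqrt{\xi}\, \|(q^k)'\|_{L^2([z,z+\xi])}$ and the integrability in (H2), this reduces matters to controlling local $L^2$-energies $\int_{A_k + [0,\delta]} ((q^k)')^2$. These in turn can be compared, via the lower bound $V(s) \ge c s^2$ near zero (available from $V''(0) > 0$) and a splitting of the $\xi$-integration into regions where the difference quotient is small (quadratic regime) versus large (where $V \ge$ const forces large potential energy), to $\int U^k$ over a slightly enlarged annulus, which is again $o(1)$. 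The same decomposition yields $\int U_i^k \to \alpha_i$ with $\alpha_1 = \alpha$, $\alpha_2 = K - \alpha$. The essential novelty compared to the classical pointwise Lions argument is this interplay between the ``kinetic'' $L^2$-density and the ``potential'' density $U^k$, made possible by (H1).
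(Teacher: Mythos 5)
Your setup (concentration functions, Helly, the trichotomy in $\alpha$, cutting the \emph{derivative} so that no $\phi'$ term enters $(q_i^k)'$, and the resulting kinetic inequality and support separation) matches the paper's Steps 1--2. The gap is in the one step you yourself flag as the main difficulty: controlling $\int_{A_k}(U_1^k+U_2^k)$ over the transition region. Your plan is to bound $U_i^k(z)$ by the local kinetic density $\int_{[z,z+\delta]}((q^k)')^2$ and then to convert this back into potential mass of a slightly enlarged annulus using $V(s)\ge c s^2$. That conversion goes in the wrong direction and is false: $U^k$ only sees finite differences $q^k(z+\xi)-q^k(z)$ at scales $\xi\in[\ell,\delta]$, which do not control $(q^k)'$. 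A small-amplitude, high-frequency oscillation of $q^k$ on the annulus (amplitude $\eps_k\to0$, derivative of order one in $L^2$) makes $\int_{\mathrm{annulus}}U^k$ as small as you like while $\int_{\mathrm{annulus}}((q^k)')^2$ stays of order one. Worse, even the premise that the kinetic energy in the widening annulus is small cannot be expected: in the splitting alternative kinetic energy may genuinely be lost in the gap, which is exactly why the lemma only asserts the inequality $T(q^k)-T(q_1^k)-T(q_2^k)\ge 0$ rather than near-equality. So as written, neither $\Vert U^k-(U_1^k+U_2^k)\Vert_{L^1}\le\eps$ nor the mass estimates for $\int U_i^k$ follow, because your cutoffs transition over a region of width $\sim R_k\to\infty$ on which you have no smallness for $U_i^k$.

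The paper avoids this by using \emph{sharp} truncations: $q_1^k$ equals $q^k$ on $B_{R_1^k}(y_k)$ and is exactly constant outside, and similarly for $q_2^k$, so $U_i^k$ agrees with $U^k$ (or vanishes) except on boundary layers of \emph{fixed} width $\delta$ around the cut radii. The contribution of those layers is handled by the key estimate $U_i^k(z)\le M_\ell\bigl(\int_z^{z+\delta}((q^k)')^2\bigr)^{1/2}$ (valid because $\ell>0$ keeps the arguments of $V$ in a bounded set) together with a pigeonhole choice of $R_1^k\in[R,R+\tfrac13(R^k-R)]$ and $R_2^k\in[R+\tfrac23(R^k-R),R^k]$: since the total kinetic energy is $\le C^2$, among the $\sim R^k/\delta$ disjoint candidate layers at least one carries contribution $\le \tilde\delta(k)\to0$. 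If you want to keep smooth cutoffs at scale $R_k$, the statement you would actually need is a pointwise comparison such as $|q_i^k(z+\xi)-q_i^k(z)|\le |q^k(z+\xi)-q^k(z)| + O\bigl(\xi^{3/2}\Vert(\phi_i^k)'\Vert\,\Vert (q^k)'\Vert_{L^2([z,z+\xi])}\bigr)$ (by integrating by parts against $\phi_i^k$), followed by the local Lipschitz bound on $V$ and a Cauchy--Schwarz in $z$ to make the error $o(1)$; that is a different argument from the kinetic-versus-potential comparison you propose, which cannot work.
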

%-------------------------END OF STATEMENT------------------------------------------
\begin{proof}

		{\bf Step 1.} For each $k$, define a concentration function $Q_k:\R^+\to \R^+$ such that
		$$0\le Q_k(R):= \sup_{y\in \R} \int\limits_{B_R(y)} U^k \le K$$ for all $k$.

		Since $Q_k$'s are increasing functions that are uniformly bounded, up to a subsequence, we have that $Q_k$ converges to some  non-negative increasing function $Q$, pointwise. Define $$\alpha := \lim_{R\to\infty}Q(R)\in [0,K].$$ We have 3 cases:
		\begin{itemize}
			\item $\alpha = 0$. This implies (ii).
			\item $\alpha =K$. This implies (i).
			\item $\alpha\in (0,K)$. We need to prove that this implies (iii).
		\end{itemize}
		To show the last item, we proceed as following.

		{\bf Step 2.}  Fix $\eps>0$, since $Q\nearrow \alpha$, there exists $R$ such that $Q(R-\delta)>\alpha-\eps$. Since $Q_k \to Q$ pointwisely and $Q_k$'s are increasing function, for large enough $k$, pick $y_k$ so that

		$$\int_{B_R(y_k)} U^k > \alpha-\eps.$$

		Furthermore, since $\lim_{R\to\infty} Q(R)=\alpha$, we can find $R^k\to\infty$ such that
		$$Q_k(R^k+\delta) < \alpha+\eps.$$
		Let $R_1^k \in [R,R + \frac{1}{3}(R^k-R)]$ and $R_2^k\in [R+\frac{2}{3}(R^k-R),R^k]$ where $R_1^k<R_2^k$ to be specified later.

		Define continuous functions $q_1^k$ and $q_2^k$ so that
		\begin{equation*}
				(q_1^k)'=\begin{cases}
						(q^k)' , & z\in B_{R_1^k}(y_k),\\
						0, & \text{ otherwise },
				\end{cases}
		\end{equation*}
		and
		\begin{equation*}
				(q_2^k)'=\begin{cases}
						0 , & z\in B_{R_2^k}(y_k),\\
						(q^k)', & \text{ otherwise }.
				\end{cases}
		\end{equation*}
		More specifically,
		\begin{equation}
			q_1^k(z) := \begin{cases}
									q^k(y_k-R_1^k),  &z\le y_k -R_1^k,\\
									q^k(z),  & z\in B_{R_1^k}(y_k),\\
									q^k(y_k+R^k_1),  &z\ge y_k+R_1^k.
							\end{cases}
		\end{equation}

		\begin{equation}
			q_2^k(z) := \begin{cases}
									q^k(z)-q^k(y_k-R_2^k),  & z\le y_k -R_2^k,\\
									0,  & z\in B_{R_2^k}(y_k),\\
									q^k(z) - q^k(y_k + R_2^k), &z\ge y_k+R_2^k.
							\end{cases}
		\end{equation}

		By direct computation, we have that
		$$\frac{1}{2}\int\limits_\R ((q^k)')^2 -\frac{1}{2}\int\limits_\R ((q^k_1)')^2 -\frac{1}{2}\int\limits_\R ((q^k_2)')^2 =\frac{1}{2}\int\limits_{B_{R_2}(y_k)\backslash B_{R_1}(y_k)} ((q^k)')^2\ge 0$$ and $$\dist(\supp((q_1^k)'),\supp((q_2^k)'))\to \infty$$ as $k\to\infty$.

		We now need to choose good $R_1^k$ and $R_2^k$.

		{\bf Step 3.} We next get some estimate for $U_j^k$, for $j=1,2$. First, from Cauchy-Schwarz inequality and since $\ell>0$,
		\begin{align*}
				U^k_j(z) &:= U(q^k_j;z) =\int\limits_{\ell}^\delta V\bigg(\frac{q_j^k(z+\xi) - q_j^k(z)}{m(\xi)}\bigg)k(\xi) \, d\xi\\
						& \le \int_{\ell}^{\delta}V\bigg(\frac{\xi^{1/2}(\int_0^\xi \vert (q^k_j)'(z+s)\vert^2 ds)^{1/2}}{m(\xi)} \bigg)k(\xi)\,  d\xi \\
						& \le C_\ell \int_\ell^\delta \bigg(\int_0^\delta \vert (q^k_j)'(z+s)\vert^2 ds\bigg)^{1/2} \frac{\xi^{1/2}k(\xi)}{m(\xi)}\,  d\xi \\
					%&\le \int_{\ell}^{\delta} V\bigg(\eta^{k+}_j(z;\xi)\bigg)\frac{k(\xi)}{m_2(\xi)} d\xi = \int_{\ell}^\delta V\bigg(\int_0^\xi q'(z+s)ds \bigg)\frac{k(\xi)}{m_2(\xi)} d\xi \\
					%& \le  V\bigg(-\delta^{1/2}(\int_0^\delta \vert q'(z+s)\vert^2ds)^{1/2} \bigg)\int_0^\delta\frac{k(\xi)}{m_2(\xi)}d\xi \\
					%&\le C_V\bigg(\int_0^\delta \vert q'(z+s)\vert^2ds \bigg)^{1/2} \int_0^\delta \frac{k(\xi)}{m_2(\xi)}d\xi =
					&\le M_\ell\bigg(\int_0^\delta \vert (q^k)'(z+s)\vert^2ds \bigg)^{1/2}
			\end{align*}
		%where $\eta^+(z) = q(z+\xi)-q(z)$. %and $\eta^-(z) = q(z)-q(z-\xi)$.
            where $C_\ell$ is the local Lipschitz constant of $V$ depending on $C$ in the hypothesis and $\ell$. (We remark that this is the key new estimate that makes the rest of the proof work almost exactly as in \cite{F-W} again.)

		{\bf Step 4.}  Let
		$$D_{i,k}^{-} := (y_k - R_i^k - \delta, y_k- R_i^k],$$
		$$D_{i,k}^{+} := [y_k - R_i^k , y_k- R_i^k+ \delta),$$
		$$E_{i,k}^{-} := (y_k + R_i^k - \delta,y_k + R_i^k], $$
		$$E_{i,k}^{+} := [y_k + R_i^k ,y_k + R_i^k + \delta). $$

		\begin{figure}[h!]
		\begin{tikzpicture}
				\draw[latex-latex] (-7,0) -- (7,0) ; %edit here for the axis
				%\foreach \x in  {-3,-2,-1,0,1,2,3} % edit here for the vertical lines
				%\draw[shift={(\x,0)},color=black] (0pt,3pt) -- (0pt,-3pt);
				%\foreach \x in {-3,-2,-1,0,1,2,3} % edit here for the numbers
				%\draw[shift={(\x,0)},color=black] (0pt,0pt) -- (0pt,-3pt) node[below]
				%{$\x$};
				\draw[shift={(0,0)},color=black] (0pt,0pt) -- (0pt,-3pt) node[below] {$y_k$};
				\draw[shift={(-5,0)},color=black]  node[below] {$y_k-R_2^k$};
				\draw[shift={(-2,0)},color=black]  node[below] {$y_k-R_1^k$};
				\draw[shift={(5,0)},color=black]  node[below] {$y_k+R_2^k$};
				\draw[shift={(2,0)},color=black]  node[below] {$y_k+R_1^k$};

				\draw[shift={(1.5,0)},color=black]  node[above] {$E^{-}_{1,k}$};
				\draw[shift={(4.5,0)},color=black]  node[above] {$E^{-}_{2,k}$};
				\draw[shift={(5.5,0)},color=black]  node[above] {$E^{+}_{2,k}$};
				\draw[shift={(-1.5,0)},color=black]  node[above] {$D^{+}_{1,k}$};
				\draw[shift={(-2.5,0)},color=black]  node[above] {$D^{-}_{1,k}$};
				\draw[shift={(-5.5,0)},color=black]  node[above] {$D^{-}_{2,k}$};

				\draw[*-o] (-4.9,0) -- (-6.0,0);
				\draw[very thick] (-5,0) -- (-5.9,0);
				\draw[o-] (-1.0,0) -- (-2.00,0);
				\draw[*-o] (-1.9,0) -- (-3.0,0);
				\draw[very thick] (-1.1,0) -- (-2.9,0);
				\draw[o-*] (1.0,0) -- (2.1,0);

				\draw[very thick] (1.1,0) -- (2,0);
				\draw[o-] (4.0,0) -- (5.00,0);
				\draw[*-o] (4.9,0) -- (6.0,0);
				\draw[very thick] (4.1,0) -- (5.9,0);
		\end{tikzpicture}
		\caption{Intervals}
		\end{figure}
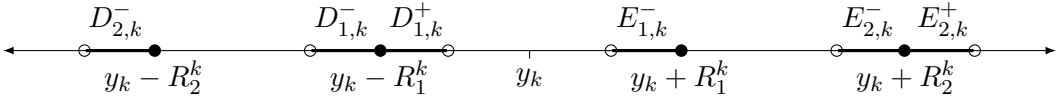

		We have that, $z<y_k - R_1^k$ implies $(q_1^k)'=0$. So,
		 \begin{align*}
			\int_{D_{1,k}^{-}} U_1^k(z) dz  &\le M_\ell\int_{D_{1,k}^{-}}\bigg(\int\limits_{z}^{z+\delta} \vert (q^k_1)'(s)\vert^2ds\bigg)^{1/2}  dz\\
				&\le M_\ell\int_{D_{1,k}^{-}}\bigg(\int_{D_{1,k}^{+}} \vert (q^k_1)'(s)\vert^2ds\bigg)^{1/2}  dz\\
				&= M_\ell\delta\bigg(\int_{D_{1,k}^{+}} \vert (q^k_1)'(s)\vert^2ds\bigg)^{1/2}
		\end{align*}
		where the constant $M_\ell$ may differ from line to line.
		Similarly, we have that

		 \begin{align*}
			\int_{E_{1,k}^{-}} U_1^k &\le M_\ell \bigg(\int_{E_{1,k}^{-}}((q^k)')^2\bigg)^{1/2}, \\
			\int_{D_{2,k}^{-}} U_2^k &\le M_\ell \bigg(\int_{D_{2,k}^{-}}((q^k)')^2\bigg)^{1/2}, \\
			\int_{E_{2,k}^{-}} U_2^k &\le M_\ell \bigg(\int_{E_{2,k}^{+}}((q^k)')^2\bigg)^{1/2},
	\end{align*}

		where we can take $M_\ell$ to be the same for all estimates.

        {\bf Step 5.} Now, we partition $[R+\delta, R +\frac{1}{3}(R^k-R)]$ into $\frac{R^k-R}{3\delta} -1$ intervals and use the fact that  $\int\limits_{\R} ((q^k)')^2 \le C^2$ to get the following.
			\begin{align*}
			 \bigg(\frac{(R^k-R)}{3\delta}-1\bigg)&\min_{R^k_1 \in [R+\delta,R+\frac{1}{3}(R^k-R)]}\int\limits_{ E_{1,k}^{-}} U^k_1 \le \int_{[R,R+\frac{1}{3}(R^k-R)} U_1^k\\
			 &\le M_\ell\bigg(\int\limits_{[R,R+\frac{1}{3}(R^k-R)+\delta]}((q^k)')^2\bigg)^{1/2} \le M_\ell C.
	\end{align*}

	Thus,
	\begin{equation}
			\min_{R_1^k \in [R+\delta,R+\frac{1}{3}(R^k-R)]}\int\limits_{E_{1,k}^{-}} U_1^k\le \tilde{\delta}(k)
	\end{equation}
	where $$\tilde{\delta}(k):= \frac{M_\ell C}{\frac{R^k-R}{3\delta}-1}\xrightarrow{k\to\infty} 0.$$

	Similarly, the above works if we replace $E^{-}_{1,k}$ by $D^{-}_{1,k}$ and so
	\begin{equation}
		\min_{R_1^k \in [R+\delta,R+\frac{1}{3}(R^k-R)]}\int\limits_{E_{1,k}^{-} \bigcup D_{1,k}^{-}} U_1^k\le 2\tilde{\delta}(k).
\end{equation}

We also have the following
	\begin{equation}
			\begin{aligned}
			\min_{R_2^k \in [R+\frac{2}{3}(R^k-R),R^k]}\int\limits_{D_{2,k}^{-} \bigcup E_{2,k}^{-}} U_2^k &\le 2\tilde{\delta}(k).
		\end{aligned}
		\end{equation}

		{\bf Step 6.} Choose $R_1^k$ and $R_2^k$ so that the minima above are obtained. We then have that

			\begin{align*}
			\int_\R \vert U^k - U_1^k - U_2^k\vert &= \int\limits_{D_{1,k}^{-}\bigcup E_{1,k}^{-}}\vert U^k-U_1^k\vert
									\qquad + \int\limits_{D_{2,k}^{-}\bigcup E_{2,k}^{-}}\vert U^k-U_2^k\vert \\
									& \qquad+ \bigg(\int\limits_{y_k-R_2^k}^{y_k-R_1^k-\delta}+\int\limits_{y_k+R_1^k}^{y_k+R_2^k-\delta}\bigg) U^k \\
									&\le \int\limits_{D_{1,k}^{-}\bigcup E_{1,k}^{-}}  (U_1^k + U^k)
									%&\qquad + \bigg(\int_{D_{1,k}^{-}} +\int_{E_{1,k}^{-}}\bigg) U^k \\
										\qquad+  \int\limits_{D_{2,k}^{-}\bigcup E_{2,k}^{-}} U_2^k + U^k\\
										%&\qquad+ \bigg( \int_{D_{2,k}^{-}} +\int_{E_{2,k}^{-}}\bigg) U_2^k\\
										&\qquad + \bigg( \int\limits_{y_k-R_2^k}^{y_k-R_1^k-\delta} +\int\limits_{y_k+R_1^k}^{y_k+R_2^k-\delta}\bigg) U^k\\
									&\le 4\tilde{\delta}(k) + \int_{B_{R^k+\delta}(y_k)}U^k -\int_{B_{R-\delta}(y_k)} U^k\\
									&< 4\tilde{\delta}(k) + (\alpha + \eps)  - (\alpha-\eps) = 4\tilde{\delta}(k) + 2\eps.
			\end{align*}

		Furthermore,
		\begin{equation}
				\begin{aligned}
			\int_\R U_1^k &= \int\limits_{D_{1,k}^{-} \bigcup E_{1,k}^{-}} U_1^k \qquad + \int\limits_{y_k-R_1^k}^{y_k+R_1^k-\delta}U^k_1
			\end{aligned}
		\end{equation}

		But we know that, from the definitions of $R, R^k$ and $R_1^k$, for large enough $k$ so that $R^k_1-\delta \ge R$,
		$$ \int\limits_{y_k-R_1^k}^{y_k+R_1^k-\delta}U^k \in (\alpha-\eps,\alpha+\eps).$$
		So,
		$$\bigg\vert \int_\R U_1^k dz - \alpha\bigg\vert \le \eps.$$
		Similarly, we have, for large enough $k$,
		\begin{equation}
			\begin{aligned}
			\int_\R U_2^k &= \int\limits_{D_{2,k}^{-}\bigcup E_{2,k}^{-}}  U_1^k \qquad + \bigg( \int\limits_{-\infty}^{y_k-R_2^k-\delta} +\int\limits_{y_k+R_2^k}^\infty \bigg) U^k_2
			\end{aligned}
		\end{equation}
		and
		$$\bigg( \int\limits_{-\infty}^{y_k-R_2^k-\delta} +\int\limits_{y_k+R_2^k}^\infty \bigg) U^k \in ((K-\alpha)-\eps,(K+\alpha)-\eps).$$
		So,
		$$\bigg\vert \int_\R U_2^k dz - (K-\alpha)\bigg\vert \le \eps.$$
		This proves the last item in step 1 and hence the lemma.
\end{proof}

%------------------------- COMPACT SUPPORT----------------------------------

\begin{remark}\label{technical}
	For each minimizing sequence $\{q^k\}\subseteq \cA_K$ of $T$, by replacing $q^k$ by
	$$\tilde{q}^k =\begin{cases}
        q^{k}(-a), & z\le -a \\
		q^k(z), & -a\le z\le a\\
		q^k(a) + q_{\Lambda,1}(z-a-1), & z\ge a
    \end{cases} $$ for sufficiently large $a$, and $\Lambda$ chosen to ensure that $\cE^\ell(\tilde{q}^k) = K$, we can assume each function of the minimizing sequence has a compact-support derivative.
\end{remark}
\hfill

\begin{lemma}\label{mon}
		The map $K\mapsto T_K, K\in [0,\infty)$ is monotone increasing and continuous.
\end{lemma}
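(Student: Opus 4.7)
My plan is to prove both the monotonicity and the continuity by scaling arguments: given any admissible $q$, the one-parameter family $\lambda q$ allows us to interpolate between different energy levels, and the key estimates come from the convexity and superquadraticity of $V$ (here meaning the symmetrized $\bar V$).

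\smallskip
\noindent\emph{Monotonicity.} Given $0 \le K_1 \le K_2$, take any $q \in \mathcal{A}^\ell_{K_2}$; by Remark~\ref{technical} we may assume $q'$ has compact support. The function $\phi(\lambda) := \mathcal{E}^\ell(\lambda q)$ is continuous on $[0,1]$ (dominated convergence, using $V(0)=0$ and compact $z$-support) with $\phi(0)=0$ and $\phi(1)=K_2$. The intermediate value theorem yields $\lambda_* \in [0,1]$ with $\phi(\lambda_*) = K_1$, and then $\lambda_* q \in \mathcal{A}^\ell_{K_1}$ with $T(\lambda_* q) = \lambda_*^2 T(q) \le T(q)$. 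Taking the infimum over $q$ gives $T_{K_1} \le T_{K_2}$.

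\smallskip
\noindent\emph{Upper semi-continuity.} Fix $\varepsilon>0$ and $K > 0$, and choose a near-minimizer $q^* \in \mathcal{A}^\ell_K$ with $T(q^*) \le T_K + \varepsilon$ and $(q^*)'$ compactly supported. The map $\phi_*(\lambda) := \mathcal{E}^\ell(\lambda q^*)$ is a continuous, strictly increasing bijection from $[0,\infty)$ to $[0,\infty)$ (strict monotonicity follows from superquadraticity and $q^* \not\equiv 0$). Hence for $K_n \to K$ the unique $\mu_n$ with $\phi_*(\mu_n) = K_n$ satisfies $\mu_n \to 1$, so $T_{K_n} \le T(\mu_n q^*) = \mu_n^2 T(q^*)$ and $\limsup T_{K_n} \le T_K + \varepsilon$. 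Letting $\varepsilon \to 0$ gives $\limsup T_{K_n} \le T_K$. The case $K=0$ is trivial since $\mathcal{E}^\ell(q) = 0$ forces $q \equiv 0$, so $T_0 = 0$.

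\smallskip
\noindent\emph{Lower semi-continuity, and the main obstacle.} This is the delicate step: we take near-minimizers $q_n \in \mathcal{A}^\ell_{K_n}$ with $T(q_n) \le T_{K_n} + 1/n$, and rescale by letting $\lambda_n > 0$ solve $\phi_n(\lambda_n) = K$ for $\phi_n(\lambda) := \mathcal{E}^\ell(\lambda q_n)$. The difficulty is that $\lambda_n$ now depends on the varying $q_n$, and we need $\lambda_n \to 1$ uniformly. The key observation is that $\phi_n$ is \emph{convex} in $\lambda$, because $\lambda \mapsto \bar V(\lambda s)$ is the composition of the convex function $\bar V$ with an affine map. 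Combined with $\phi_n(0) = 0$ and $\phi_n(1) = K_n$, this makes $\lambda \mapsto \phi_n(\lambda)/\lambda$ nondecreasing on $(0,\infty)$, and comparing values at $\lambda = 1$ and $\lambda = \lambda_n$ yields the $q_n$-independent bound
\[
|\lambda_n - 1| \le \frac{|K - K_n|}{K_n}.
\]
For $K > 0$ this forces $\lambda_n \to 1$, so $T_K \le T(\lambda_n q_n) = \lambda_n^2 T(q_n) \le \lambda_n^2(T_{K_n} + 1/n)$ gives $T_K \le \liminf T_{K_n}$. This convexity estimate — sidestepping any compactness or $H$-boundedness input — is the crux of the argument.
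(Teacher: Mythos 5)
Your monotonicity argument is essentially the paper's own scaling argument. For continuity, however, you take a genuinely different route. The paper fixes a near-minimizer $q_K$ at level $K$ and splices a small ramp $q_{\Lambda,1}$ onto it far to the right, choosing the splice point by the intermediate value theorem so that the total energy is exactly $K+r$; this yields the one-sided estimate $T_{K+r}-T_K\le \tfrac12\Lambda_1(r)^2$ with a modulus independent of $K$, which together with monotonicity gives continuity (in fact uniformly on $[0,\infty)$). You instead prove upper and lower semicontinuity at each fixed $K>0$ by rescaling near-minimizers, and your key point --- that $\lambda\mapsto\mathcal{E}^\ell(\lambda q)$ is convex with value $0$ at $\lambda=0$, so $\mathcal{E}^\ell(\lambda q)/\lambda$ is nondecreasing and the rescaling factor obeys the $q$-independent bound $|\lambda_n-1|\le|K-K_n|/K_n$ --- is correct and cleanly replaces the splicing construction; pointwise continuity is all that is needed downstream (e.g.\ in Lemma \ref{reformulate1}), so your weaker conclusion suffices there. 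What the paper's construction buys in exchange is the $K$-independent modulus, and with it continuity at the endpoint.

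That endpoint is the one genuine (though minor) gap: the lemma asserts continuity on $[0,\infty)$, and your argument does not give right-continuity at $K=0$. Your upper-semicontinuity step degenerates there, since the only admissible function at level $0$ is $q^*\equiv 0$, so $\phi_*$ is identically zero and has no inverse; computing $T_0=0$ does not by itself show $T_{K_n}\to 0$ as $K_n\downarrow 0$. The fix is exactly the paper's device: for small $K_n>0$ choose $\Lambda_n$ with $\mathcal{E}^\ell(q_{\Lambda_n,1})=K_n$ (possible since $\Lambda\mapsto\mathcal{E}^\ell(q_{\Lambda,1})$ is continuous, increasing, and vanishes at $\Lambda=0$), whence $T_{K_n}\le\tfrac12\Lambda_n^2\to 0$. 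You should also record, in the lower-semicontinuity step, why $\mathcal{E}^\ell(\lambda q_n)$ is finite and continuous for $\lambda>1$ so that the equation $\mathcal{E}^\ell(\lambda_n q_n)=K$ is solvable; for fixed $\ell>0$ this follows from the quadratic bound $V(x)\le\tfrac12 M x^2$ on bounded sets and the Cauchy--Schwarz estimate used in Lemma \ref{ineq:liminf}, which controls $\int_\R U(\lambda q_n;z)\,dz$ by $T(q_n)$ --- a detail at the same level as what the paper itself leaves implicit.
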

\begin{proof} The argument of this lemma is basically the same as in Lemma 1d in \cite{F-W}.

		The monotone part is based on a simple scaling argument.
		Let $\alpha\le K$, $q\in \cA_K$. Because $\Eell(\Lambda q)=0$ when $\Lambda=0$ and $\Eell(\Lambda q)=K$ when $\Lambda=1$, there exists $\Lambda_0\in [0,1]$ so that $\Eell(\Lambda_0 q)=\alpha$. We have that
		$$T_\alpha \le T(\Lambda_0 q) = \Lambda_0^2 T(q) \le T(q).$$
		This works for all $q\in \cA_K$. Thus, $T_\alpha\le T_K.$

				To see $K\mapsto T_K$ is continuous, we proceed as follows. Since $T_K$ is monotone in $K$, it suffices to show that there exists  $\eta(r)\to 0$ as $r\to 0^+$ so that
				for all $K$,
				$$T_{K+r}-T_K \le \eta(r).$$

		Fix $K$ and $r>0$. Let $\eps>0$, $q_K \in \cA_K$ such that $T(q_K)\le T_K+\eps$. Consider the function
		$$q(z):= \begin{cases}
				q_K(z), &z\le a\\
				q_K(a)+ q_{\Lambda,1}(z-a), & z> a
		\end{cases}, $$ where $\Lambda$ and $a$ will be specified later.

		Observe that $q\in H$ and
		$$\Eell(q) = \int_{-\infty}^{a-\delta} U(q_K;\cdot) +\int_{a-\delta}^a U(q;\cdot) + \Eell(q_{\Lambda,1}).$$

		Also,
		$$T(q)- T_K \le T(q)- T(q_K)+\eps \le T(q_{\Lambda,1})+\eps = \frac12\Lambda^2+\eps.$$

		 Now, define
		$$\Lambda_1(r):=\inf\{ \Lambda>0: \Eell(q_{\Lambda,1})=2r\}. $$

		Since $\Eell(q_{\Lambda,1})$ is increasing and continuous in $\Lambda$ and $\Eell(q_{0,1})=0$, pick $0<\Lambda \le \Lambda_1(r)$ so that $r<\Eell(q_{\Lambda,1})<K+r$. Then, we have that
		$$\Eell(q)\to K + \Eell(q_{\Lambda,1})>K+r$$ as $a\to\infty$ and
		$$\Eell(q)\to \Eell(q_{\Lambda,1})< K+r$$ as $a\to-\infty$. By continuity of the integral, there exists $a\in \R$ such that
		$$\Eell(q) = K+r.$$

		Thus,
		$$T_{K+r} - T_K \le T(q)- T_K \le T(q_{\Lambda,1})+\eps \le \frac{1}{2} \Lambda_1(r)^2 +\eps.$$
		Since $\eps$ is arbitrary,
		$$T_{K+r} - T_K \le \frac{1}{2}\Lambda_1^2(r).$$
		Observe that $\Lambda_1(r)\to 0$ as $r\to0$ independently of $K$. Thus, we define $$\eta(r):=\frac{1}{2}\Lambda_1^2(r)$$ and the result follows.
\end{proof}

%-------------------------------Reformulate-------------------------------------

\begin{lemma}\label{reformulate1} Let $U$ be as above and $K>0$ be fixed. Then
the following are equivalent:
		\begin{enumerate}[i.]
			\item No minimizing sequence $\{ q^k\}\subseteq \cA_K$ splits, i.e., satisfies Lemma \ref{concentration-compactness}.iii.
			\item $T$ satisfies a subadditivity condition
			\begin{equation}
					T_K < T_\alpha + T_{K-\alpha}, \hspace{0.3in} \forall \alpha\in (0,K). \label{subadditivity} \tag{S}
				\end{equation}
		\end{enumerate}
\end{lemma}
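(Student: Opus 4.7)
The plan is to prove the two implications separately, using the continuity of $K\mapsto T_K$ (Lemma \ref{mon}), the splitting alternative in Lemma \ref{concentration-compactness}, and Remark \ref{technical} (which lets me assume minimizing sequences have derivatives of compact support).

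For (ii) $\Rightarrow$ (i), I would argue by contradiction. Suppose the subadditivity (S) holds but some minimizing sequence $\{q^k\}\subseteq\cA_K$ for $T$ satisfies Lemma \ref{concentration-compactness}.iii, with splitting parameter $\alpha\in(0,K)$. A standard diagonal extraction over the $\varepsilon\to0$ quantifier produces companion sequences $\{q_1^k\},\{q_2^k\}\subseteq W^{1,2}_{\loc}(\R)$ with
\begin{equation*}
\int_\R U_1^k \to \alpha, \qquad \int_\R U_2^k\to K-\alpha, \qquad T(q^k)\ge T(q_1^k)+T(q_2^k).
\end{equation*}
Since $T$ and $U$ are invariant under an additive constant, vertically shifting each $q_i^k$ so that $q_i^k(0)=0$ places $q_i^k\in\cA_{\int_\R U_i^k}$, so $T(q_i^k)\ge T_{\int_\R U_i^k}$. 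By continuity of $K\mapsto T_K$,
\begin{equation*}
T_K=\lim_k T(q^k)\ge\lim_k\bigl(T_{\int_\R U_1^k}+T_{\int_\R U_2^k}\bigr)=T_\alpha+T_{K-\alpha},
\end{equation*}
which contradicts (S).

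For (i) $\Rightarrow$ (ii), I would argue contrapositively. First, the non-strict inequality $T_K\le T_\alpha+T_{K-\alpha}$ holds unconditionally for any $\alpha\in(0,K)$: given near-minimizing $q_1\in\cA_\alpha$ and $q_2\in\cA_{K-\alpha}$ with compactly supported derivatives (Remark \ref{technical}), translating $q_2$ so that the gap between the two derivative-supports exceeds $\delta$ and then concatenating produces $q\in\cA_K$ with $T(q)=T(q_1)+T(q_2)$; the key point is that because $\xi\in[\ell,\delta]$ in $\Eell$, the constant interface contributes nothing to the potential energy, and one computes $U(q;\cdot)=U(q_1;\cdot)+U(q_2(\cdot-g);\cdot)$ pointwise. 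Now suppose (ii) fails, so $T_K=T_\alpha+T_{K-\alpha}$ for some $\alpha\in(0,K)$. Taking minimizing sequences $\{q_1^k\},\{q_2^k\}$ for $T_\alpha,T_{K-\alpha}$ with compactly supported derivatives, and concatenating with gaps $g_k\to\infty$, yields a minimizing sequence $\{q^k\}\subseteq\cA_K$ for $T_K$ that manifestly satisfies Lemma \ref{concentration-compactness}.iii (the pieces $q_1^k,q_2^k$ themselves serve as splitting functions, with $\|U^k-U_1^k-U_2^k\|_{L^1}=0$, $|\int U_i^k-\alpha_i|=0$, and zero kinetic-energy excess). This contradicts (i).

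The main delicate point is in the forward direction: one must confirm that the truncated pieces $q_i^k$ furnished by Lemma \ref{concentration-compactness} actually lie in $\cA_{\int_\R U_i^k}$, so the inequality $T(q_i^k)\ge T_{\int_\R U_i^k}$ is available. This hinges on the $W^{1,2}$ bound and $L^1$ integrability of $U(q_i^k)$ already encoded in the splitting statement, together with the vertical shift to normalize $q_i^k(0)=0$. The technically important feature of the reverse direction is the lower cut-off $\ell>0$, which is what guarantees exactly additive energies upon gluing and thereby makes strict subadditivity both necessary and sufficient to prevent splitting.
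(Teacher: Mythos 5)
Your proposal is correct and follows essentially the same route as the paper: the implication (ii) $\Rightarrow$ (i) via the splitting pieces, the bound $T(q^k)\ge T(q_1^k)+T(q_2^k)$ and continuity of $K\mapsto T_K$ (Lemma \ref{mon}), and the implication (i) $\Rightarrow$ (ii) by gluing compactly-supported-derivative minimizing sequences for $T_\alpha$ and $T_{K-\alpha}$ with a growing gap, exactly as in the paper's use of Remark \ref{technical}. Your added observations (the diagonal extraction in $\eps$, the vertical renormalization to land in $\cA_{\int U_i^k}$, and the exact additivity of the energies across a gap wider than $\delta$) simply make explicit details the paper leaves implicit.
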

\begin{proof}
		The proof of this goes almost exactly the same as in \cite{F-W}.

		($(ii)\implies (i)$). Suppose, by contradiction, that there exists a minimizing sequence $\{q^k\}\subseteq \cA_K$ that splits for some $\alpha\in (0,K)$. Define $$\alpha_k:= \Eell(q^k_1),$$ and $$\beta_k:= \Eell(q^k_2).$$
		As $k\to\infty$, by continuity of $T_K$, we have that
		$$T_K \ge \liminf_{k\to\infty}(T(q_1^k)+T(q_2^k)) \ge \liminf_{k\to\infty}(T_{\alpha_k} + T_{\beta_k}) = T_\alpha + T_{K-\alpha},$$ quod est absurdum.

		($(i)\implies (ii)$). Suppose, by contradiction, that (\ref{subadditivity}) does not hold, i.e., $\exists \alpha \in (0,K)$ such that $$T_K\ge T_\alpha + T_{K-\alpha}.$$
		We want to construct a minimizing sequence that splits. Let $\{q_\alpha^k\} \subseteq \cA_{\alpha}$ and $\{q_{K-\alpha}^k\} \subseteq \cA_{K-\alpha}$ be minimizing sequence of $T$ under the respective constraints. By remark \ref{technical}, we can assume that the supports of $(q^k_\alpha)'$ and $(q^k_{K-\alpha})'$ are contained in $B_{R_k}(0)$ for some $R_k>0$. Then the sequence $$q^k(z):= q^k_{K-\alpha}(z+R_k+k) + q^k_{K-\alpha}(z-R_k-k)+C_k$$ where $C_k$ is chosen to make $q^k(0)=0$ works as desired.
\end{proof}

Next, we introduce the quantity
$$[q(x)]_{\ell}=\sup_{\ell\le\xi\le\delta}\bigg\vert \frac{q(x+\xi)-q(x)}{m(\xi)}\bigg\vert,$$
and note that this bounds the argument of $V$ in the expression for $U(q;z)$ defined in
\eqref{d:Uk}. As in Step 3 of the proof of Lemma~\ref{concentration-compactness},
by Cauchy-Schwarz we find the bound
\[
 \Vert [q]_\ell \Vert_{\infty} \le \frac{\sqrt{2\delta T(q)}}{m(\ell)}.
 \]

\begin{lemma} [Uniform modulus of continuity of $U$] \label{unif_cont}
		Let $M>0$. There exists a constant $C_1$ such that for all $q\in W^{1,2}_\loc(\R)$ with $T(q)\le M^2$,
		$$\vert U(q;z_1)-U(q;z_2)\vert \le C_1\vert z_1-z_2\vert^{1/2}$$
		for any $z_1,z_2\in \R$.
\end{lemma}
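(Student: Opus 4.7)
The plan is to reduce the modulus-of-continuity estimate for $U(q;\cdot)$ to a Hölder-$\tfrac12$ estimate for $q$ itself (obtained from $L^2$-boundedness of $q'$ via Cauchy--Schwarz), composed with a local Lipschitz bound for $V$. The only care needed is to check that the arguments of $V$ stay in a compact set depending on $M$ and $\ell$, so that the local Lipschitz constant is uniform across the class $\{q : T(q)\le M^2\}$.

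First I would write
\[
U(q;z_1)-U(q;z_2) = \int_{\ell}^{\delta}\Bigl[V\bigl(A_1(\xi)\bigr)-V\bigl(A_2(\xi)\bigr)\Bigr]k(\xi)\,d\xi,
\qquad A_i(\xi):=\frac{q(z_i+\xi)-q(z_i)}{m(\xi)}.
\]
By the bound $\|[q]_\ell\|_\infty \le \sqrt{2\delta T(q)}/m(\ell) \le \sqrt{2\delta}\,M/m(\ell)$ recalled just before the lemma, both $A_1(\xi)$ and $A_2(\xi)$ lie in a fixed compact interval $I_{M,\ell}\subset\mathbb{R}$ whenever $T(q)\le M^2$. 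Since $V\in C^2$, it is Lipschitz on $I_{M,\ell}$ with some constant $L=L(M,\ell)$, so
\[
\bigl|V(A_1(\xi))-V(A_2(\xi))\bigr|\,k(\xi) \le L\,\frac{k(\xi)}{m(\xi)}\,\bigl|\bigl(q(z_1+\xi)-q(z_2+\xi)\bigr)-\bigl(q(z_1)-q(z_2)\bigr)\bigr|.
\]

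Next I would use Cauchy--Schwarz twice: for any $a,b\in\mathbb{R}$,
\[
|q(a)-q(b)| = \Bigl|\int_b^a q'(s)\,ds\Bigr| \le |a-b|^{1/2}\,\|q'\|_{L^2(\R)} \le \sqrt{2}\,M\,|a-b|^{1/2}.
\]
Applying this to the two pairs $(z_1+\xi,z_2+\xi)$ and $(z_1,z_2)$ yields
\[
\bigl|\bigl(q(z_1+\xi)-q(z_2+\xi)\bigr)-\bigl(q(z_1)-q(z_2)\bigr)\bigr|
\le 2\sqrt{2}\,M\,|z_1-z_2|^{1/2},
\]
independently of $\xi$.

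Combining the two estimates and integrating in $\xi$,
\[
\bigl|U(q;z_1)-U(q;z_2)\bigr| \le 2\sqrt{2}\,L(M,\ell)\,M\,\Bigl(\int_{\ell}^{\delta}\frac{k(\xi)}{m(\xi)}\,d\xi\Bigr)\,|z_1-z_2|^{1/2},
\]
and the integral factor is finite by hypothesis (H\ref{A2}). Setting $C_1$ equal to the product of the constants gives the claim. The only mild obstacle is verifying the $L^\infty$ bound for $A_i(\xi)$, which however is exactly the observation recorded immediately before the lemma and has already been used in Step~3 of Lemma~\ref{concentration-compactness}; everything else is a direct application of Cauchy--Schwarz and the $C^2$-regularity of $V$.
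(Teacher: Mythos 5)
Your proposal is correct and follows essentially the same route as the paper: both bound the difference of $V$-values by a local Lipschitz constant for $V$ (uniform in $q$ thanks to the bound $\Vert [q]_\ell\Vert_\infty \le \sqrt{2\delta T(q)}/m(\ell)$ stated before the lemma), then obtain the $|z_1-z_2|^{1/2}$ factor from Cauchy--Schwarz applied to integrals of $q'$, and finally integrate in $\xi$ using that $k(\xi)/m(\xi)$ is controlled on $[\ell,\delta]$. The only cosmetic difference is that you apply Cauchy--Schwarz to the two differences $q(z_1+\xi)-q(z_2+\xi)$ and $q(z_1)-q(z_2)$ separately, whereas the paper applies it once to $\int_{z_1}^{z_2}[q'(s+\xi)-q'(s)]\,ds$; the estimates are equivalent.
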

\begin{proof}
		Let $z_1, z_2\in \R$, writing $\eta(z,\xi)=q(z+\xi)-q(z)$ we have that
				\begin{align*}
						\vert U(q;z_1) - U(q;z_2)\vert
								&= \bvert \int_{\ell}^\delta \bigg[V\bigg(\frac{q(z_1+\xi)-q(z_1)}{m(\xi)}\bigg) - V\bigg(\frac{q(z_2+\xi)-q(z_2)}{m(\xi)}\bigg) \bigg]k(\xi) \, d\xi\bvert\\
								& \le \int_{\ell}^\delta \bigg\vert \int_{\eta(z_2,\xi)/m(\xi)}^{\eta(z_1,\xi)/m(\xi)} V'(s)ds\bigg\vert k(\xi) \, d\xi\\
								&\le \int_{\ell}^\delta \tilde{C} \bigg\vert \frac{\eta(z_1,\xi)}{m(\xi)} -\frac{\eta(z_2,\xi)}{m(\xi)}\bigg\vert k(\xi) \, d\xi\\
								&\le \tilde{C}'\int_{\ell}^\delta \bigg\vert \int_{z_1}^{z_2} [q'(s+\xi)-q'(s)]ds\bigg\vert \, d\xi\\
								& \le \tilde{C}'\int_{\ell}^\delta \vert z_1-z_2\vert^{1/2}\Vert q'\Vert_{L^2} \, d\xi\\
								& = (\delta-\ell)\tilde{C}'M\vert z_1-z_2\vert^{1/2} = C_1\vert z_1-z_2\vert^{1/2}
				\end{align*}

		where   $\tilde{C}= V'(\Vert[q]_{\ell}\Vert_\infty)$.
\end{proof}

Now, for convenience, we introduce the following notation:

$$\Nell = \int_\ell^\delta \frac{\xi^2 k(\xi)}{m(\xi)^2} \, d\xi . $$
\begin{lemma}\label{ineq:liminf}
		Define
		$$T_{K,\eps} = \inf\{ T(q): q\in \cA_K, \Vert[q]_{\ell}\Vert_\infty\le \eps \}.$$
				Then
				\begin{equation}\label{e:Tke-eq}
				\liminf_{\eps\to 0}T_{K,\eps} =\frac{K}{\Nell V''(0)}.
				\end{equation}
\end{lemma}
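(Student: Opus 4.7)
The plan is to prove the two-sided bound separately, with the key heuristic being that $\|[q]_\ell\|_\infty \to 0$ forces the argument of $V$ in $U(q;z)$ to be small, so $V$ may be replaced by its quadratic Taylor expansion $\tfrac12 V''(0) s^2$, which turns the problem into a Fourier-multiplier-style comparison of $\|q'\|_{L^2}^2$ against $\int_\R\int_\ell^\delta (q(z+\xi)-q(z))^2 k(\xi)/m(\xi)^2\,d\xi\,dz$.

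\medskip

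\textbf{Upper bound.} I would test with the trapezoidal functions $q_{\Lambda,L}$ from the introduction. For $\xi\in[\ell,\delta]$, the quotient $\xi/m(\xi)$ is bounded by $\delta/m(\ell)$ using monotonicity of $m$, so $\|[q_{\Lambda,L}]_\ell\|_\infty\le\Lambda\delta/m(\ell)$, which is small when $\Lambda$ is small. Splitting $\int_\R U(q_{\Lambda,L};z)\,dz$ into the linear interior $z\in[\delta,L-\delta]$ (where $q_{\Lambda,L}(z+\xi)-q_{\Lambda,L}(z)=\Lambda\xi$ for every admissible $\xi$) and the two boundary strips (of total length $O(\delta)$ and contributing $O(\Lambda^2)$), and using the Taylor bound $V(s)=\tfrac12 V''(0)s^2+O(|s|^{2+\gamma_1}+|s|^{2+\gamma_2})$ derived by integrating (H1), I obtain
\[
\int_\R U(q_{\Lambda,L};z)\,dz=\tfrac12 V''(0)N_\ell \Lambda^2 L(1+O(\Lambda^{\gamma_1}))+O(\Lambda^2).
\]
Solving this $=K$ for $L$ and recalling $T(q_{\Lambda,L})=\Lambda^2 L/2$ yields $T(q_{\Lambda,L})=K/(N_\ell V''(0))+o(1)$ as $\Lambda\to 0$, proving $\liminf_{\eps\to 0}T_{K,\eps}\le K/(N_\ell V''(0))$.

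\medskip

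\textbf{Lower bound.} For $q\in\cA_K$ with $\|[q]_\ell\|_\infty\le\eps$, the integrand $(q(z+\xi)-q(z))/m(\xi)$ lies in $[-\eps,\eps]$, so the same Taylor bound gives
\[
V\!\left(\frac{q(z+\xi)-q(z)}{m(\xi)}\right)\le \tfrac12 V''(0)\left(\frac{q(z+\xi)-q(z)}{m(\xi)}\right)^{\!2}\!\bigl(1+C(\eps^{\gamma_1}+\eps^{\gamma_2})\bigr).
\]
Multiplying by $k(\xi)$, integrating in $\xi$ and $z$, and using the Cauchy–Schwarz estimate
\[
\int_\R (q(z+\xi)-q(z))^2\,dz\le \xi^2\|q'\|_{L^2}^2=2\xi^2 T(q)
\]
(which follows from $q(z+\xi)-q(z)=\int_0^\xi q'(z+s)\,ds$ and Fubini), I get
\[
K=\int_\R U(q;z)\,dz\le V''(0)N_\ell T(q)\bigl(1+C(\eps^{\gamma_1}+\eps^{\gamma_2})\bigr),
\]
i.e.\ $T(q)\ge K/(N_\ell V''(0))\cdot(1+o(1))$. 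Taking $\inf$ over admissible $q$ and then $\eps\to 0$ gives the matching inequality.

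\medskip

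\textbf{Expected main obstacle.} The only delicate point is keeping the boundary contributions from the test function (and the error term from the non-quadratic part of $V$) quantitatively small relative to the main quadratic term; since $T(q_{\Lambda,L})$ is comparable to $\Lambda^2 L$ while the boundary error is $O(\Lambda^2)$, I need $L\to\infty$, which is automatic from the relation $L\sim 2K/(V''(0)N_\ell\Lambda^2)$. The rest is just bookkeeping using (H1) and the finiteness of $N_\ell$ ensured by (H2).
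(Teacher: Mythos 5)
Your proposal is correct and follows essentially the same route as the paper: the lower bound via the near-zero quadratic bound on $V$ together with Cauchy--Schwarz and Fubini to compare $\int_\R U(q;\cdot)$ with $\Nell T(q)$, and the upper bound via the test functions $q_{\Lambda,L(\Lambda)}$ with $L(\Lambda)$ fixed by the energy constraint and $\Lambda\to 0$. The only cosmetic slip is that integrating (H\ref{A1}) yields only the upper bound $V(s)\le \tfrac12 V''(0)s^2 + C(|s|^{2+\gamma_1}+|s|^{2+\gamma_2})$; the matching lower-bound direction you invoke in the test-function computation comes from superquadraticity (or simply Taylor with $V\in C^2$), exactly as the paper uses it.
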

\begin{proof}
		First, we prove the following inequality
		\begin{equation}\label{ineq:lim}
			\liminf_{\eps\to 0}T_{K,\eps} \ge\frac{K}{\Nell V''(0)}.
		\end{equation}
		Let $\{q^k\} \subseteq \cA_K$ be a sequence such that $\Vert[q^k]_{\ell}\Vert_\infty\to 0$. Let $R_{\ell}^k:=[-\Vert[q^k]_{\ell}\Vert_\infty, \Vert[q^k]_{\ell}\Vert_\infty]$. Then, let $M_\ell=\sup_{x\in R_{\ell}^k}\vert V''(x)\vert$,
		\begin{equation*}
				\begin{aligned}
						U(q^k;z) &= \int_{\ell}^\delta  V\bigg(\frac{q^k(z+\xi)-q^k(z)}{m(\xi)}\bigg) k(\xi) \, d\xi\\
								&\le \frac{1}{2} M_{\ell} \int_{\ell}^\delta \bigg(\frac{q^k(z+\xi)-q^k(z)}{m(\xi)} \bigg)^2 k(\xi) \, d\xi \\
                                &\le \frac{1}{2} M_{\ell} \int_{\ell}^\delta \bigg(\int_{z}^{z+\xi} \vert {q^k}'(s)\vert^2 ds \bigg) \frac{\xi k(\xi)}{m(\xi)^2} \, d\xi.
				\end{aligned}
		\end{equation*}
		Thus,
		\begin{equation*}
				\begin{aligned}
                    \int_{\R} U(q^k;z)dz &\le \frac{1}{2} M_\ell \int_{\R} \int_{\ell}^\delta \bigg( \int_{z}^{z+\xi} \vert {q^k}'(s)\vert^2 ds \bigg) \frac{\xi k(\xi)}{m(\xi)^2} \, d\xi \, dz \\
                    &= \frac{1}{2} M_\ell \int_{\ell}^\delta \int_{\R} \bigg(\int_{z}^{z+\xi}\vert {q^k}'(s)\vert^2 \, ds  \bigg)\frac{\xi k(\xi)}{m(\xi)^2}\, dz \, d\xi \\
                    &= \frac{1}{2}M_\ell \int_{\ell}^\delta \frac{\xi^2 k(\xi)}{m(\xi)^2}\int_{\R} \vert {q^k}'(s)\vert^2 \, ds d\xi\\
						&= M_\ell \Nell T(q^k).
				\end{aligned}
		\end{equation*}
		Therefore, by assumption, $$\liminf_{k\to\infty} T(q^k) \ge \frac{K}{\Nell V''(0)}.$$
		Since $\{q^k\}$ is arbitrary, the inequality \eqref{ineq:lim}  follows.

		Next, we prove the equality \eqref{e:Tke-eq} by employing the piecewise linear function $q_{\Lambda, L}$. We note that,
		$$T(q_{\Lambda,L}) = \frac{1}{2} \Lambda^2 L$$
		and

		\begin{equation} \label{eq:qlambdal}
				\int_{\R} U(q_{\Lambda,L})=\int_{\ell}^\delta \bigg[(L-\xi)V(\frac{\Lambda\xi}{m(\xi)})k(\xi) + g(\xi)  \bigg]d\xi %= \frac{1}{2}L(\delta^2-\ell^2)V(\Lambda) -\frac{1}{3}(\delta^3-\ell^3)V(\Lambda) + C
		\end{equation}

		where $g\ge0$ is integrable and nonvanishing and doesn't depend on $L$. Furthermore, for each $\Lambda$, we can choose an $L(\Lambda)$ so that $\int_{\R}U(q_{\Lambda,L})=K$ and if $\Lambda\to 0$ then $L(\Lambda)\to\infty$. Thus, we have, for some $C>0$,
		\begin{align*}
				%\frac{\frac{1}{2}L(\Lambda)(\delta^2-\ell^2)V(\Lambda) -\frac{1}{3}(\delta^3-\ell^3)V(\Lambda)}{\frac{1}{2}\Lambda^2L(\Lambda)} &\le   \
				\frac{K}{T(q_{\Lambda, L(\Lambda)})} =\frac{{\displaystyle \int_{\ell}^\delta \bigg[(L-\xi)V(\frac{\Lambda\xi}{m(\xi)})k(\xi)  \bigg]d\xi} + C}{\frac{1}{2}\Lambda^2L(\Lambda)}.
		\end{align*}
		Using Taylor expansion on the left and right sides and letting $\Lambda\to0,$ we find that
		\begin{equation}\label{limit_T}
			K= \Nell V''(0) \lim_{\Lambda\to 0} T(q_{\Lambda,L(\Lambda)}) \ge \Nell V''(0)\limsup_{\eps\to 0}T_{K,\eps}.
		\end{equation}

		Combined with the inequality \eqref{ineq:lim}, we find that indeed
		\eqref{e:Tke-eq} holds.
\end{proof}

\begin{lemma}\label{reformulate2}
		The following are equivalent:
			\begin{enumerate}[i.]
				\item No minimizing sequence $\{ q^k\} \subseteq \cA_K$ vanishes, i.e., satisfies Lemma \ref{concentration-compactness}.ii.
				\item There exists $\eps(K)>0$ such that every minimizing sequence $\{q^k\}\subseteq \cA_K$ satisfies
				$$\liminf_{k\to\infty} \Vert [q^k]_{\ell}\Vert_\infty >\eps.$$
				\item $T$ satisfies the energy inequality
				\begin{equation}
						\Nell V''(0)\cdot T_K < K. \label{energy}\tag{E}
				\end{equation}
			\end{enumerate}
\end{lemma}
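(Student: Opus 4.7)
I propose to establish the three equivalences by proving $(i)\Leftrightarrow(ii)$ and $(ii)\Leftrightarrow(iii)$, using Lemma \ref{ineq:liminf} as the bridge between the energy inequality and the behaviour of minimizing sequences. The a priori bound $T_K \le K/(N_\ell V''(0))$ built into Lemma \ref{ineq:liminf} already shows that (iii) is just the strict version of an automatic inequality, so the essential task is to relate this strictness to the behaviour of the seminorm $\Vert[q^k]_\ell\Vert_\infty$ along minimizing sequences.

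For three of the four implications I argue by contrapositive. If (ii) fails then a standard diagonal extraction produces a single minimizing sequence $\{q^k\}\subseteq \cA_K$ with $\Vert[q^k]_\ell\Vert_\infty \to 0$. On one hand, since $V$ is even, nonnegative and nondecreasing on $[0,\infty)$, one has $U^k(z) \le V(\Vert[q^k]_\ell\Vert_\infty) \int_\ell^\delta k(\xi)\,d\xi \to 0$ uniformly in $z$, so $\sup_y \int_{B_R(y)} U^k \to 0$ for every $R$ and $\{q^k\}$ vanishes, giving $(i)\Rightarrow(ii)$. On the other hand, the Taylor bound $\int_\R U^k\,dz \le M_\ell N_\ell T(q^k)$ with $M_\ell \to V''(0)$, exactly as in the first part of Lemma \ref{ineq:liminf}, forces $K \le V''(0) N_\ell T_K$, contradicting (iii) and giving $(iii)\Rightarrow(ii)$. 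Conversely, if (iii) fails then $T_K = K/(N_\ell V''(0))$ and the piecewise linear family $\{q_{\Lambda,L(\Lambda)}\}$ from the second part of Lemma \ref{ineq:liminf} is already a minimizing sequence whose seminorm $\Vert[q_{\Lambda,L(\Lambda)}]_\ell\Vert_\infty \le \Lambda\,\delta/m(\ell)$ tends to $0$, contradicting (ii); hence $(ii)\Rightarrow(iii)$.

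The remaining direction $(ii)\Rightarrow(i)$ is the main step. Let $\{q^k\}$ be any minimizing sequence; by (ii), up to subsequence I can pick $z_k \in \R$ and $\xi_k \in [\ell,\delta]$ with $|q^k(z_k+\xi_k)-q^k(z_k)|/m(\xi_k) > \eps$. The key auxiliary estimate is that $a_k(\xi) := (q^k(z_k+\xi)-q^k(z_k))/m(\xi)$ is H\"older-$1/2$ on $[\ell,\delta]$ with a constant uniform in $k$: Cauchy--Schwarz gives $|q^k(z_k+\xi)-q^k(z_k+\xi')| \le |\xi-\xi'|^{1/2}\,\Vert(q^k)'\Vert_{L^2}$, and $m$ is Lipschitz and bounded below by $m(\ell) > 0$ on $[\ell,\delta]$. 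Consequently $|a_k(\xi)| > \eps/2$ on a subinterval $I_k \subset [\ell,\delta]$ of length $\rho > 0$ independent of $k$, which yields
\[
U^k(z_k) \ge V(\eps/2)\, k(\ell)\, \rho > 0.
\]
Lemma \ref{unif_cont} then propagates this lower bound to a neighbourhood of $z_k$ of uniform size, so $\int_{B_R(z_k)} U^k \ge c > 0$ for some $R$ independent of $k$, and $\{q^k\}$ does not vanish.

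\textbf{Main obstacle.} The delicate step is the uniform lower bound on $U^k(z_k)$. In the discrete FPU setting of \cite{F-W} a single large jump immediately produces a single large summand in the energy, but here $U^k$ is an integral in $\xi$, so a pointwise-in-$\xi$ large value of $a_k$ is useless without promoting it to a lower bound on a set of positive $\xi$-measure. The uniform H\"older continuity of $a_k$ in $\xi$ that effects this promotion is precisely what $m(\ell) > 0$---the truncation of short bonds---provides, matching the authors' earlier remark that they do not see how the nonvanishing reformulation would work without such a truncation.
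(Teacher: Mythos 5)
Your proposal follows essentially the paper's own route: Lemma \ref{ineq:liminf} is the bridge between \eqref{energy} and the seminorm behaviour of minimizing sequences, the implication (i)$\Rightarrow$(ii) is handled by the same contrapositive observation that $\Vert[q^k]_\ell\Vert_\infty\to0$ forces $U^k\to0$ uniformly and hence vanishing, and (ii)$\Rightarrow$(i) is proved, as in the paper, by producing a pointwise lower bound $U^k(z_k)\ge c>0$ uniform in $k$ and spreading it over a ball of fixed radius with Lemma \ref{unif_cont}. The only real variation is (ii)$\Rightarrow$(iii): the paper argues directly that (ii) forces $T_K<T_{K,\hat\eps}$ and then sends $\hat\eps\to0$, whereas you argue by contrapositive, noting that $T_K\le K/(\Nell V''(0))$ always holds and that in the case of equality the family $q_{\Lambda,L(\Lambda)}$ from Lemma \ref{ineq:liminf} is itself a minimizing sequence in $\cA_K$ with $\Vert[q_{\Lambda,L(\Lambda)}]_\ell\Vert_\infty\le \Lambda\delta/m(\ell)\to0$; this is a correct and arguably cleaner alternative, since it sidesteps justifying the strict inequality $T_K<T_{K,\hat\eps}$.

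One step needs repair. In (ii)$\Rightarrow$(i) you assert that $m$ is Lipschitz on $[\ell,\delta]$ so that $a_k(\xi)=(q^k(z_k+\xi)-q^k(z_k))/m(\xi)$ is uniformly H\"older-$1/2$ in $\xi$. Hypothesis (H\ref{A2}) only guarantees that $m$ is odd, vanishes only at $0$ and is non-decreasing; it need not be Lipschitz, or even continuous, on $[\ell,\delta]$, so the H\"older bound on $a_k$ is unjustified as stated (a jump of $m$ inside $(\ell,\delta)$ would break it). The fix is exactly the paper's device: work with the numerator rather than the quotient. From $|q^k(z_k+\xi_k)-q^k(z_k)|>\eps\, m(\xi_k)\ge \eps\, m(\ell)$ and the Cauchy--Schwarz estimate $|q^k(z_k+\xi)-q^k(z_k+\xi_k)|\le|\xi-\xi_k|^{1/2}\Vert(q^k)'\Vert_{L^2}$, with $\Vert(q^k)'\Vert_{L^2}$ uniformly bounded along a minimizing sequence, you obtain $|q^k(z_k+\xi)-q^k(z_k)|>\eps\, m(\ell)/2$ on a $\xi$-interval of length $\rho$ independent of $k$, and then $|a_k(\xi)|\ge \eps\, m(\ell)/(2m(\delta))$ there using only monotonicity of $m$; the conclusion becomes $U^k(z_k)\ge V\bigl(\eps\, m(\ell)/(2m(\delta))\bigr)k(\ell)\rho>0$, and the rest of your argument (propagation via Lemma \ref{unif_cont}, failure of vanishing) goes through unchanged.
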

\begin{proof}
		$((ii)\Leftarrow (iii)).$ This is an easy consequence of Lemma \ref{ineq:liminf}.

		$((ii)\Rightarrow (iii)).$ Since we assume that $\Vert[q^k]_{\ell}\Vert_\infty >\eps$ for every
		minimizing sequence, necessarily from \eqref{e:Tke-eq},
		$$T_K < T_{K,\hat{\eps}} $$ for each $\hat{\eps}<\eps$. So, because $T_{K,\hat\eps}$
		increases as $\hat\eps$ decreases, we deduce
		$$T_K < \lim_{\hat{\eps}\to 0} T_{K,\hat{\eps}}= \frac{K}{\Nell V''(0)}.$$

		$((i)\implies (ii))$. Suppose we have that there exists a minimizing sequence $\{q^k\}$ with $\Vert[q^k]_{\ell}\Vert_\infty\to 0$.
		Then for all $y\in \R$ and $R>0$,

			\begin{align*}
				\int_{B_R(y)} U(q^k;\cdot)  &= \int_{B_R(y)} \int_{\ell}^\delta V\bigg(\frac{q^k(x+\xi)-q^k(x)}{m(\xi)} \bigg)k(\xi) \,d\xi \,dx\\
					&\le \int_{B_R(y)} C\int_{\ell}^\delta \bigg(\frac{q^k(x+\xi)-q^k(x)}{m(\xi)}\bigg)^2k(\delta) \, d\xi \, dx\\
					&\xrightarrow{k\to\infty} 0,
			\end{align*}

	which implies we have vanishing; i.e., (i) fails.

		$(ii)\implies (i)$.
		{\bf Step 1.} Let $q\in \cA_K$ such that $\Vert[q]_{\ell}\Vert_\infty>\eps$. We can then pick $y_0, \xi_0$ so that
		$$\bvert \frac{q(y_0+\xi_0)-q(y_0)}{m(\xi_0)}\bvert>\frac{\eps}{2}.$$

		Since $\xi_0\ge \ell$,
		$$|q(y_0+\xi_0)-q(y_0)| > \frac{m(\ell)\eps}{2}.$$
		By continuity, we can pick $\delta'$ so that $\forall \xi \in B_{\delta'}(\xi_0)$, $|q(y_0+\xi)-q(y_0)|>\frac{m(\ell)\eps}{4}.$ Without loss of generality, assume that $y+\xi_0+\delta' <\delta$ and $y_0+\xi_0-\delta' >\ell$. Thus,
		\begin{equation*}
				\begin{aligned}
						U(q;y_0)= & \int_{\ell}^\delta V(\frac{q(y_0+\xi)-q(y_0)}{m(\xi)}) k(\xi) \, d\xi \\
						%\ge &  \int_{\ell}^\delta  V(\frac{q(y_0+\xi)-q(y_0)}{m(\delta)})k(\ell) d\xi\\
						\ge & \int_{y_0+\xi_0-\delta'}^{y_0+\xi_0+\delta'} V(\frac{m(\ell)\eps}{4m(\delta)})k(\ell) \, d\xi\\
						=& 2\delta'V(\frac{m(\ell)\eps}{4m(\delta)})k(\ell) >0.
				\end{aligned}
		\end{equation*}

		Thus, for each $\ell, \eps$, there exists a $C_{\ell,\eps}$ such that, for each $q$ such that $\Vert[q]_{\ell}\Vert_\infty>\eps$, there exists a $y_0$ such that
		\begin{equation}\label{est:Ubelow}
				U(q;y_0) \ge C_{\ell,\eps}.
		\end{equation}

		{\bf Step 2.} Let $\eps>0$, $C_{\ell,\eps}$ be as in step 1. Let $q$ be such that $\Vert[q]_{\ell}\Vert_\infty>\eps$. Thus, by step 1, there exists $y_0$ such that $U(q;y_0)>C_{\ell,\eps}$. By Lemma \ref{unif_cont} we then have, there exists an $R$ so that for $x\in B_R(y_0)$
				$$U(q;x) \ge \frac{C_{\ell,\eps}}{2}.$$
		Thus,
				$$\int_{B_R(y_0)} U(q;x) \ge RC_{\ell,\eps}.$$

		So, we have shown that for any $q$ such that $\Vert[q]_{\ell}\Vert_\infty>\eps$, there exist $R$ and $\eps_1$ dependent only on $\eps$ and $\ell$ so that for some $y_0$,
		$$\int_{B_R(y_0)}U(q;z)\, dz \ge \eps_1.$$
		This implies (i).
\end{proof}

%%%%% SUFFICIENT CONDITION FOR EXISTENCE OF MINIMUM%%%%
\begin{proposition}\label{prop:energy-subadd}
	Let $U$ be as above and $K>0$ be fixed. Assume
	\begin{equation}
		T_K < T_\alpha + T_{K-\alpha}, \hspace{0.3in} \forall \alpha\in (0,K). \tag{S}
	\end{equation}
	and
	\begin{equation}
		\Nell V''(0)\cdot T_K < K. \tag{E}
	\end{equation}
	Then there exists a minimizer of $T$ on $\cA_K$.
\end{proposition}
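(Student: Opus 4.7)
My plan is to extract a convergent subsequence from a minimizing sequence by means of concentration--compactness, then verify that the limit achieves both the constraint and the infimum. Take $\{q^k\} \subseteq \cA_K$ with $T(q^k) \to T_K$. Then $\Vert(q^k)'\Vert_{L^2}$ is bounded and $\int_\R U^k = K$, so the hypotheses of Lemma~\ref{concentration-compactness} hold. Along a subsequence, exactly one of compactness, vanishing, or splitting must occur. Hypothesis (E) combined with Lemma~\ref{reformulate2} excludes vanishing; hypothesis (S) combined with Lemma~\ref{reformulate1} excludes splitting. Thus only the compactness alternative is possible: there exist $y_k \in \R$ such that $U(q^k;\,\cdot + y_k)$ is tight.

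Next I would replace $q^k$ by $\tilde q^k(z) := q^k(z + y_k) - q^k(y_k)$. Since $V$ sees only differences, $U(\tilde q^k; z) = U(q^k; z + y_k)$ is tight at the origin, and $\tilde q^k \in H$ with $T(\tilde q^k) = T(q^k)$. Dropping tildes, extract a further subsequence so that $(q^k)' \rightharpoonup p$ weakly in $L^2(\R)$. Pairing $(q^k)'$ with $\chi_{[0,x]} \in L^2$ gives $q^k(x) = \int_0^x (q^k)'(s)\,ds \to \int_0^x p(s)\,ds =: q(x)$ pointwise, so $q \in H$ and $q' = p$. The Cauchy--Schwarz bound $\vert q^k(x) - q^k(y)\vert \le \Vert(q^k)'\Vert_{L^2}\vert x - y\vert^{1/2}$ gives a uniform $1/2$-H\"older estimate, so by Arzel\`a--Ascoli the convergence $q^k \to q$ is locally uniform on $\R$. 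Weak lower semicontinuity of the $L^2$ norm then yields $T(q) \le \liminf_k T(q^k) = T_K$.

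The remaining step is to verify $\Eell(q) = K$, which together with $T(q) \le T_K$ forces $q \in \cA_K$ to be a minimizer. For fixed $R > 0$, locally uniform convergence of $q^k$ on $[-R-\delta,\,R+\delta]$ together with the bound $m(\xi) \ge m(\ell) > 0$ for $\xi \in [\ell, \delta]$ implies that the arguments $(q^k(z+\xi) - q^k(z))/m(\xi)$ converge uniformly for $(z,\xi) \in [-R,R] \times [\ell,\delta]$ and stay in a fixed bounded set on which $V$ is uniformly continuous. Integrating against the finite measure $k(\xi)\,d\xi$ on $[\ell,\delta]$ yields $U(q^k; z) \to U(q; z)$ uniformly on $[-R,R]$, so $\int_{-R}^R U(q^k) \to \int_{-R}^R U(q)$. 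Combined with tightness---given $\eps > 0$, choose $R$ so that $\int_{\vert z\vert > R} U(q^k; z)\,dz < \eps$ for all $k$---this delivers $\vert K - \int_{-R}^R U(q; z)\,dz\vert \le \eps$, whence $\Eell(q) = K$ upon sending $\eps \to 0$.

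The principal obstacle is this last paragraph: weak convergence in $H$ alone is not enough to pass to the limit in a nonlocal integral over all of $\R$. The tightness granted by the compactness alternative, together with the fact that the truncation $\xi \ge \ell$ keeps the difference quotients $(q^k(z+\xi)-q^k(z))/m(\xi)$ controlled (which is exactly why we passed from Problem~\ref{generalization} to Problem~\ref{approximate} in the first place), is what makes this limit passage possible.
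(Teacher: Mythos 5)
Your proposal is correct and follows essentially the same route as the paper: rule out vanishing via (E) and Lemma~\ref{reformulate2}, rule out splitting via (S) and Lemma~\ref{reformulate1}, recenter using the compactness alternative, extract a weak limit in $H$, use weak lower semicontinuity for $T(q)\le T_K$, and combine locally uniform convergence (your H\"older/Arzel\`a--Ascoli step is the same mechanism as the paper's compact embedding $W^{1,2}(B_R)\hookrightarrow L^\infty(B_R)$) with tightness to get $\Eell(q)=K$. No gaps.
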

\begin{proof}
		Let $\{q^k\}\subseteq \cA_K$ be a minimizing sequence of $T$. By lemmas \ref{reformulate1} and \ref{reformulate2}, we have that, up to a subsequence, $\{q^k\}$ satisfies the compactness case of the concentration-compactness lemma. By replacing $q^k$ by $q^k(y_k+\cdot) -q^k(y_k)$, we can assume that $q^k$'s have centers at $0$ and $q^k(0)=0$. Since $\{q^k\}$ is bounded in $H$, there exists a subsequence that weakly converges to $q\in H$. By weak lower semicontinuity of norm, $$T(q)\le \liminf T(q^k)= \inf_{\cA_K}T.$$
		To see that $\Eell(q)=K$, we note that $W^{1,2}(B_R(0))$ is compactly embedded in $L^\infty(B_R(0))$. Therefore, $q^k\to q$ in $L^\infty(B_{R}(0))$.  This implies $U(q^k;\cdot)\to U(q;\cdot)$ in $L^\infty(B_{R-\delta}(0))$. Therefore,
		$$\int_{B_R(0)}U(q^k;\cdot) \to \int_{B_R(0)} U(q;\cdot)$$ for all $R>0$.

		On the other hand, by Lemma \ref{concentration-compactness} case i, for every $\eps'>0$ there exists an $R$ such that for every $k\in \N$
		$$\int_{\R\backslash B_{R}} U(q^k;\cdot) \le \eps'.$$
		Using a standard $\eps/2$ argument, we see that
		$$K=\lim \int_\R U(q^k;\cdot) = \int_\R U(q;\cdot).$$
		So, $q$ is a minimizer of $T$ on $\cA_K$.
\end{proof}

\begin{lemma}\label{lem:energy}
		There exists $K_0>0$ such that for all $K>K_0$, (\ref{energy}) holds.
\end{lemma}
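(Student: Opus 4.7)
The plan is to exhibit, for each sufficiently large $K$, an explicit competitor $q\in\cA_K$ whose kinetic energy lies strictly below $K/(\Nell V''(0))$. The natural choice is the piecewise linear ramp $q_{\Lambda,L}$ introduced in the notations subsection. Revisiting the computation leading to \eqref{eq:qlambdal}, for any $\Lambda>0$ and $L>\delta$ one has
$$T(q_{\Lambda,L})=\tfrac{1}{2}\Lambda^2 L,\qquad \Eell(q_{\Lambda,L})=L\,A(\Lambda)+B(\Lambda),$$
where $A(\Lambda):=\int_\ell^\delta V\!\bigl(\Lambda\xi/m(\xi)\bigr)k(\xi)\,d\xi$ and $B(\Lambda)$ is a nonnegative, $L$-independent remainder absorbing the $-\xi V(\cdot)k(\cdot)$ contribution and the boundary $g$-term from \eqref{eq:qlambdal}.

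For each fixed $\Lambda>0$ and every $K>B(\Lambda)$, solving $\Eell(q_{\Lambda,L})=K$ yields the explicit choice $L(\Lambda,K)=(K-B(\Lambda))/A(\Lambda)$, and the resulting ramp lies in $\cA_K$ since its derivative has compact support. This produces the upper bound
$$T_K\,\le\, T(q_{\Lambda,L(\Lambda,K)})\,=\,\frac{\Lambda^2\,(K-B(\Lambda))}{2\,A(\Lambda)}.$$
Substituting this into \eqref{energy}, dividing by $K$, and letting $K\to\infty$, the lemma reduces to finding some $\Lambda_0>0$ satisfying the strict inequality
$$\int_\ell^\delta\left[V\!\left(\frac{\Lambda_0\,\xi}{m(\xi)}\right)-\tfrac{1}{2}V''(0)\!\left(\frac{\Lambda_0\,\xi}{m(\xi)}\right)^{\!2}\right]k(\xi)\,d\xi\,>\,0,$$
after which the $O(1/K)$ correction from $B(\Lambda_0)$ becomes negligible, yielding a threshold $K_0$ such that \eqref{energy} holds for every $K>K_0$.

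The only remaining content is the existence of such a $\Lambda_0$, which is exactly what the superquadraticity hypothesis (H1) supplies: on the half-line where $V$ is superquadratic---after symmetrization, all of $\R$---one has $V(x)/x^2\to\infty$ as $|x|\to\infty$, so for each $\xi\in[\ell,\delta]$ the bracketed integrand is positive for all sufficiently large $\Lambda_0$, while the subtracted quadratic piece grows only like $\Lambda_0^2$. Hence any sufficiently large $\Lambda_0$ works. I do not anticipate a serious obstacle here; the argument is essentially a one-parameter scaling computation, and the only care needed is the bookkeeping of the remainder $B(\Lambda)$ and verifying that $L(\Lambda_0,K)>\delta$ once $K$ is large, so that $q_{\Lambda_0,L(\Lambda_0,K)}$ really is a nondegenerate ramp.
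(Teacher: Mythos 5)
Your proposal is correct and follows essentially the same route as the paper: the same ramp competitor $q_{\Lambda,L}$, the same reduction to the strict gap $\int_\ell^\delta\bigl[V(\Lambda\xi/m(\xi))-\tfrac12 V''(0)\Lambda^2\xi^2/m(\xi)^2\bigr]k(\xi)\,d\xi>0$, and the same absorption of the $L$-independent remainder by taking $K$ (equivalently $L$) large. Two small corrections to your justification. First, (H1)'s superquadraticity does not give $V(x)/x^2\to\infty$ as $|x|\to\infty$; in the Friesecke--Wattis sense it only says $V(x)/x^2$ is (strictly) increasing in $|x|$, which together with $V(0)=V'(0)=0$, $V''(0)>0$ yields $V(x)>\tfrac12 V''(0)x^2$ for every $x\ne0$. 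That weaker fact already makes your bracketed integral positive for \emph{every} $\Lambda_0>0$ (this is exactly how the paper argues, at an arbitrary fixed $\Lambda$), so your conclusion stands, but the divergence of $V(x)/x^2$ should not be invoked since it would exclude admissible potentials. Second, your remainder $B(\Lambda)=\int_\ell^\delta\bigl[g(\xi)-\xi V(\Lambda\xi/m(\xi))k(\xi)\bigr]d\xi$ is in fact nonpositive, not nonnegative: by convexity of $V$ with $V(0)=0$ one has $g(\xi)=2k(\xi)\int_0^\xi V(\Lambda s/m(\xi))\,ds\le \xi V(\Lambda\xi/m(\xi))k(\xi)$. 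This is harmless, since you treat $B(\Lambda_0)$ only as an $O(1/K)$ correction (when $B\le0$ the needed inequality requires the strict margin $\Nell V''(0)\Lambda_0^2<2A(\Lambda_0)$ plus $K$ large, which is precisely what you do), but the sign claim as stated is wrong.
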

\begin{proof}
		 Fix $\Lambda>0$. Let $q_{\Lambda, L}$ be defined as above. Then, from (\ref{eq:qlambdal}),
		\begin{align*}
			\Eell(q_{\Lambda,L}) &\ge \int_{\ell}^\delta \bigg[(L-\xi)V(\frac{\Lambda\xi}{m(\xi)})k(\xi)   \bigg]d\xi\\
				&> \int_\ell^\delta \bigg[(L-\xi)\frac{1}{2}V''(0)\frac{\Lambda^2\xi^2}{m(\xi)^2}   k(\xi)  \bigg]d\xi   \\
				&= \frac{1}{2}L\Lambda^2 V''(0)\int_\ell^\delta \bigg[ \frac{\xi^2 k(\xi)}{m(\xi)^2} - \frac{\xi^3 k(\xi)}{m(\xi)^2L}\bigg] d\xi\\
					%&= \frac{L\Lambda^2(\delta^2-\ell^2)}{2}(1 - \frac{4}{3}\frac{\delta^3-\ell^3}{L(\delta^2 - \ell^2)})\frac{V(\Lambda)}{\Lambda^2} \\
					%&= (\delta^2-\ell^2)T(q_{\Lambda,L})(1 - \frac{4}{3}\frac{\delta^3-\ell^3}{L(\delta^2 - \ell^2)})\frac{V(\Lambda)}{\Lambda^2} \\
					%&>  \frac{1}{2}(\delta^2-\ell^2)T(q_{\Lambda,L})(1 - \frac{4}{3}\frac{\delta^3-\ell^3}{L(\delta^2 - \ell^2)})V''(0)
					&= T(q_{\Lambda,L})V''(0)\int_\ell^\delta \bigg[ \frac{\xi^2 k(\xi)}{m(\xi)^2} - \frac{\xi^3 k(\xi)}{m(\xi)^2L}\bigg] d\xi
		\end{align*}
		by superquadratic property of $V$.

		So, there exists $L_0$ such that $\forall L>L_0$,
		$$\Eell(q_{\Lambda, L})> \Nell V''(0)T(q_{\Lambda,L}). $$
		Let $K_0 := \Eell(q_{\Lambda,L_0})$. Since
		$L\mapsto\Eell(q_{\Lambda,L})$  is increasing,
		for each $K>K_0$, there is an $L>L_0$ so that $K=\Eell(q_{\Lambda,L})$, hence
%    for all $K>K_0$,
		$$K>\Nell V''(0)T_K.$$
\end{proof}

\begin{lemma}\label{lem:subadditivity}
		Let $K_0$ be as in Lemma \ref{lem:energy}, then for all $K>2K_0$, (\ref{subadditivity}) holds.
\end{lemma}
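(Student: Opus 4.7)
\textit{Plan.} My plan is to prove a stronger intermediate fact — that $K\mapsto T_K/K$ is non-increasing on $(0,\infty)$ and strictly decreasing whenever the smaller argument exceeds $K_0$ — and then to deduce (S) by the standard chord argument: the inequalities $T_\alpha>(\alpha/K)T_K$ and $T_{K-\alpha}>((K-\alpha)/K)T_K$ sum to give $T_K<T_\alpha+T_{K-\alpha}$. The key input is an amplitude-rescaling argument based on the superquadraticity of $V$, with the strict version powered by the energy inequality (E) from Lemma~\ref{lem:energy}.

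For $0<K_1<K_2$ and a near-minimizer $q$ of $T_{K_1}$ satisfying $T(q)\le T_{K_1}+\varepsilon$, I would pick $\nu\ge 1$ with $\Eell(\nu q)=K_2$; superquadraticity forces $\Eell(\nu q)\ge \nu^2\Eell(q)=\nu^2 K_1$, hence $\nu^2\le K_2/K_1$, so
\[
T_{K_2}\le T(\nu q)=\nu^2 T(q)\le \tfrac{K_2}{K_1}(T_{K_1}+\varepsilon),
\]
and $\varepsilon\to 0$ yields $T_{K_2}/K_2\le T_{K_1}/K_1$. To upgrade this to a strict inequality whenever $K_1>K_0$, I would use (E) together with Lemma~\ref{reformulate2}: any minimizing sequence $\{q^k\}$ for $T_{K_1}$ then satisfies the uniform lower bound $\Vert[q^k]_\ell\Vert_\infty\ge\epsilon_0>0$, and combined with the a priori upper bound $\Vert[q^k]_\ell\Vert_\infty\le\sqrt{2\delta T(q^k)}/m(\ell)$ and the localization worked out in Steps~1--2 of the proof of Lemma~\ref{reformulate2}, the difference quotient $u=(q^k(\cdot+\xi)-q^k(\cdot))/m(\xi)$ stays in a fixed bounded annulus $\epsilon_0/2\le|u|\le M$ on a set of $(z,\xi)$-measure bounded below uniformly in $k$. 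Strict superquadraticity of the symmetrized $\overline V$ then keeps $V(\nu u)-\nu^2 V(u)$ above a positive constant on this set, producing a uniform gap $\nu_k^2\le K_2/K_1-\delta_0$ for some $\delta_0>0$ and hence $T_{K_2}/K_2<T_{K_1}/K_1$ in the limit.

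Assembling the pieces for $K>2K_0$ and $\alpha\in(0,K)$: if $\alpha>K_0$ the strict inequality $T_\alpha/\alpha>T_K/K$ follows at once from the strict step, and if $\alpha\le K_0$ I pick $\eta>0$ with $\alpha\le K_0+\eta<K$ and compose the non-increasing step between $\alpha$ and $K_0+\eta$ with the strict step between $K_0+\eta$ and $K$; the analogous inequality for $K-\alpha$ is handled identically, and summing the two delivers (S). The main obstacle in this plan is the uniform quantitative gap $\delta_0$ in the strict-monotonicity step: the pointwise strict inequality $\nu_k^2<K_2/K_1$ coming from strict superquadraticity must be promoted to a uniform positive gap along the entire minimizing sequence, and this uniformity rests on the joint two-sided control of the difference quotients coming from (E) (via Lemma~\ref{reformulate2}) and the modulus of continuity in Lemma~\ref{unif_cont}. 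I note that this route actually establishes (S) for every $K>K_0$, slightly strengthening the stated threshold $K>2K_0$.
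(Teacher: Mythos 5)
Your proposal is correct and takes essentially the same route as the paper: the monotonicity of $K\mapsto T_K/K$ together with the chord argument is exactly the Lions-type scaling condition ($\tilde S$) the paper verifies, and your strictness mechanism --- (\ref{energy}) via Lemma~\ref{reformulate2} giving a uniform lower bound on $\Vert[q^k]_{\ell}\Vert_\infty$, then a quantitative strict-superquadraticity gain forcing the rescaling factor strictly below $\sqrt{K_2/K_1}$ --- is the paper's $\theta_0$, $\alpha_0$, $\lambda_0$ computation in different packaging (note your uniform gap $\delta_0$ likewise requires restricting the scaling factor to an interval bounded away from $1$, as in the paper's definition of $\theta_0$, since the gain degenerates as $\nu\to1$). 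Your remark that this yields (S) for all $K>K_0$ is consistent with the method; the paper's threshold $2K_0$ enters only through requiring $\alpha\ge K/2>K_0$ when invoking ($\tilde S$).
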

\begin{proof}
		As Lions pointed out in \cite{L}, to prove that a function $h:[0,K]\to\R$ satisfies $h(K)<h(\alpha)+ h(K-\alpha)$ for all $\alpha\in (0,K)$, it suffices to show the following:
		\begin{equation}\tag{$ \tilde{S}$}
				\begin{cases}
						h(\theta \alpha)\le \theta h(\alpha), & \forall \alpha\in (0,\frac{K}{2}), \forall \theta\in(1,\frac{K}{\alpha}],\\
						h(\theta\alpha) < \theta h(\alpha), &\forall \alpha\in[\frac{K}{2},K), \forall \theta \in (1,\frac{K}{\alpha}].
				\end{cases}
		\end{equation}

		We want to check that $h(\alpha)=T_\alpha$ satisfies the above properties.

		Let $\alpha\in (0,K)$ and $\theta\in (1,K/\alpha]$. We first consider the case $\alpha\ge K/2$. By the concentration-compactness principle and Lemma \ref{lem:energy}, there exists $\eps$ and $C$ such that
		$$T_{\alpha} = \inf\{T(q):q\in \cA_{\alpha,\eps,C}\} $$
		where $$\cA_{\alpha,\eps,C}:= \{q\in H: \Eell(q)=\alpha,\ \Vert [q]_{\ell}\Vert_{\infty}\ge \eps,\ \Vert q\Vert \le C \}. $$
		By (\ref{est:Ubelow}), there exists $\alpha_0>0$ such that, for $A:={\{z:[q(z)]_{\ell}\ge\eps/2\}}$,
		$$\int_A U(q;z) \ge \alpha_0 $$ for all $u\in \cA_{\alpha,\eps,C}$.

		 Let $q\in \cA_{\alpha,\eps,C}$. Since $\Eell(q)=\alpha$ and for $\lambda=\sqrt{\theta}$,
		$$\Eell(\lambda q) = \Eell(\sqrt{\theta} q) \ge \theta\Eell(q)=\theta \alpha.$$ By the intermediate value theorem, there exists $\lambda=\lambda(\theta,q)\in [1,\sqrt{\theta}]$ such that
		$$\Eell(\lambda q)=\theta \alpha.$$
		We claim that $\lambda<\sqrt{\theta}$. To see this, suppose by contradiction, $\lambda=\sqrt{\theta}$. Let $$\theta_0:= \min\bigg\{\frac{V(\lambda r)}{\lambda^2 V(r)}: \vert r\vert \in [\frac{\eps}{2},C], \lambda\in [\frac{1+\sqrt{\theta}}{2},\sqrt{\theta}] \bigg\}.$$

		Note that $\theta_0>1$.
		We then have
		\begin{equation*}
				\begin{aligned}
						\theta\alpha &= \Eell(\sqrt{\theta} q) = \int_{A^C} U(\sqrt{\theta} q;\cdot) +\int_{A} U(\sqrt{\theta} q;\cdot)\\
								&\ge \theta\int_{\R\backslash A} U(q;\cdot) +\theta_0\theta\int_{A} U(q;\cdot)\\
								%&\ge \theta(\alpha-I_\eps) + \theta\theta_0 I_\eps\\
								&\ge \theta[\alpha + (\theta_0-1)\alpha_0],
				\end{aligned}
		\end{equation*}
		which is a contradiction.

		Define
		$$\lambda_0^2:= \frac{\theta \alpha}{\alpha + (\theta_0-1)\alpha_0}<\theta.$$
		The same calculation above shows that
		$$\lambda(\theta,q) \le \lambda_0. $$
		Thus,
		\begin{equation*}
				\begin{aligned}
						T_{\theta\alpha}    &\le \inf\bigg\{T(\lambda(\theta,q)q):q\in \cA_{\alpha,\eps,C} \bigg\}\le \lambda_0^2\inf\bigg\{T(q): q\in \cA_{\alpha,\eps,C} \bigg\}\\
						& =\lambda_0^2T_\alpha <\theta T_\alpha.
				\end{aligned}
		\end{equation*}
\end{proof}
Combining Proposition \ref{prop:energy-subadd}, Lemma \ref{lem:energy} and Lemma \ref{lem:subadditivity}, we have shown the following:
\begin{proposition}
		There exists a $K_0$ such that for all $K> K_0$, there exists $q\in \cA_K$ that minimizes the problem
		$$\min \bigg\{ T(q): q\in \cA_K \bigg\}.$$
\end{proposition}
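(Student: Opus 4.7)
The plan is to simply stitch together the three preceding results. First, I would apply Lemma~\ref{lem:energy} to extract a threshold $K_0^*$ such that the energy inequality \eqref{energy} holds for every $K > K_0^*$. Second, I would apply Lemma~\ref{lem:subadditivity} with this same $K_0^*$ to obtain the strict subadditivity \eqref{subadditivity} for every $K > 2K_0^*$. Setting $K_0 := 2K_0^*$ ensures that both \eqref{energy} and \eqref{subadditivity} hold simultaneously whenever $K > K_0$.

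At that point I would invoke Proposition~\ref{prop:energy-subadd}, whose hypotheses are exactly \eqref{energy} and \eqref{subadditivity}, to conclude existence of a minimizer of $T$ on $\cA_K$ for every $K > K_0$. Since Proposition~\ref{prop:energy-subadd} is itself obtained via the concentration--compactness Lemma~\ref{concentration-compactness} together with the reformulations in Lemmas~\ref{reformulate1} and \ref{reformulate2}, no further work is required here.

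There is no real obstacle: the proof is a one-line synthesis, and the content lies entirely in the lemmas already established. The only small bookkeeping point is to notice that Lemma~\ref{lem:subadditivity} demands $K > 2K_0^*$ while Lemma~\ref{lem:energy} only required $K > K_0^*$; taking the larger threshold is what guarantees that both structural inequalities \eqref{energy} and \eqref{subadditivity} are available when we feed them into Proposition~\ref{prop:energy-subadd}.
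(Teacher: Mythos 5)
Your proposal is correct and is exactly the paper's argument: the proposition is obtained by combining Lemma~\ref{lem:energy}, Lemma~\ref{lem:subadditivity}, and Proposition~\ref{prop:energy-subadd}, with the threshold taken as twice the constant from Lemma~\ref{lem:energy} so that both \eqref{energy} and \eqref{subadditivity} hold simultaneously. Your bookkeeping remark about the factor of $2$ is the only subtlety, and you handled it as the paper implicitly does.
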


\section{Properties of minimizers} \label{properties}
We now study the properties of minimizers $q=q^\ell$ for the truncated problem (Problem \ref{approximate}) where $\ell \in (0,\delta)$ is fixed. To make the notations less cluttered, we continue to suppress explicit dependence on $\ell$ throughout this section.

\subsection{Euler-Lagrange equation\label{E-L}}
This subsection is devoted to showing that the Euler-Lagrange equation of the truncated minimization problem (Problem \ref{approximate})  is equation (\ref{eq:travelling-wave-cutoff}).

\begin{proposition}
	Let $q\in \cA_K$ be such that
	$$T(q)=\inf_{\cA_K} T.$$ Then $q$ satisfies (\ref{eq:travelling-wave-cutoff}) and $q\in C^2$.
\end{proposition}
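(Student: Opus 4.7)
The plan is the standard Lagrange multiplier derivation for a constrained variational problem, followed by a bootstrap regularity argument.

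First, I would verify that the functionals $T$ and $\Eell$ are Gateaux differentiable at $q$ along test directions $\phi\in C_c^\infty(\R)$. For $T$ this is immediate, giving $\delta T(q)[\phi] = \int_\R q'\phi'\,dz$. For $\Eell$, differentiating under the integral sign (justified by (H1), which gives polynomial control on $V''$ together with $q'\in L^2$ implying $q\in L^\infty_{\rm loc}$) yields
\[
\delta \Eell(q)[\phi]=\int_\R\int_\ell^\delta f\!\bigl(q(z+\xi)-q(z),\xi\bigr)\,[\phi(z+\xi)-\phi(z)]\,d\xi\,dz.
\]
Fubini's theorem and the change of variables $z\mapsto z-\xi$ in the $\phi(z+\xi)$ piece recast this as $\int_\R \phi(z)\,G(z)\,dz$, where
\[
G(z):=\int_\ell^\delta\bigl[f(q(z)-q(z-\xi),\xi)-f(q(z+\xi)-q(z),\xi)\bigr]\,d\xi.
\]

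Next, I would apply the Lagrange multiplier principle: since $q$ minimizes $T$ on the level set $\{\Eell=K\}$ and $\delta\Eell(q)$ is not identically zero (otherwise $q$ would be a critical point of $\Eell$ on $H$, which is impossible because $\Eell(\Lambda q)$ is strictly monotone in $\Lambda$ near $\Lambda=1$ by the superquadraticity and non-degeneracy of $V$), there exists $\mu\in\R$ with $\delta T(q)=\mu\,\delta\Eell(q)$. That is,
\[
\int_\R q'(z)\phi'(z)\,dz = \mu \int_\R \phi(z)\,G(z)\,dz
\qquad \text{for all } \phi\in C_c^\infty(\R).
\]
This identifies $q'$ as a weak antiderivative of $-\mu G$. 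To pin down the sign of $\mu$, I would test against the rescaling $q_\Lambda(z):=\Lambda q(z)$: differentiating $T(q_\Lambda)$ and $\Eell(q_\Lambda)$ at $\Lambda=1$ gives $2T(q)=\mu\,\langle \delta\Eell(q),q\rangle$, and the right side is strictly positive because the symmetrized potential $\overline V$ is superquadratic and $q$ is nontrivial (so $\eta f(\eta,\xi)=\eta\overline V'(\eta/m(\xi))k(\xi)/m(\xi)\ge 2\overline W\ge 0$, with strict positivity on a set of positive measure). Hence $\mu>0$ and we set $c^2:=1/\mu>0$.

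Finally, for regularity: the relation $\int q'\phi'=-\int \phi (G/c^2)$ means $q'$ is weakly differentiable with $c^2 q''=G$ in the distributional sense. Since $q\in H\hookrightarrow C^{0,1/2}_{\rm loc}(\R)$ (because $q'\in L^2$), the map $z\mapsto q(z+\xi)-q(z)$ is continuous in $z$ uniformly in $\xi\in[\ell,\delta]$, and $f(\cdot,\xi)$ is continuous, so by dominated convergence $G$ is continuous on $\R$. Therefore $q''$ coincides with a continuous function, giving $q\in C^2(\R)$ and the pointwise Euler--Lagrange equation \eqref{eq:travelling-wave-cutoff}.

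The main obstacle I anticipate is the justification of differentiating $\Eell$ under the integral and the positivity of $\mu$; both hinge on quantitative bounds from (H1)--(H2) that control $f$ and its action on $H$-perturbations, but these are routine once one notes $\Vert[q]_\ell\Vert_\infty\le\sqrt{2\delta T(q)}/m(\ell)$ provides a uniform $L^\infty$ bound on the arguments of $V'$ appearing in the calculation.
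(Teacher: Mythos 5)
Your proposal is correct and follows essentially the same route as the paper: compute the first variations of $T$ and $\Eell$ with dominated convergence justified by (H1)--(H2) and the bound $\Vert[q]_\ell\Vert_\infty\le\sqrt{2\delta T(q)}/m(\ell)$, obtain a multiplier whose sign is fixed by pairing with the scaling direction $q$ (the paper's identity \eqref{distribution_second}, using $V'(x)x>0$), and bootstrap regularity from the continuity of the nonlocal term. The only cosmetic difference is that the paper avoids quoting an infinite-dimensional Lagrange multiplier theorem by restricting to the two-parameter family $q+tq+\eps\zeta$ and applying the finite-dimensional rule, with the $t$-derivative supplying exactly the non-degeneracy you argue for; your version would need only that small packaging step to be fully rigorous.
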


\begin{proof}

	Let $\zeta\in H$ be a non-zero function. Define $\Psi, \Phi:\R^2\to \R_+$ by
	$$ \Psi(t,\eps):= \Eell(q+tq +\eps \zeta),\qquad
	\Phi(t,\eps) : = T(q+tq+\eps\zeta) .$$

First, we claim that
$$\frac{\partial \Psi}{\partial \eps}(0,0) = \int_{\R}\int_{\ell}^\delta V'\bigg(\frac{q(z+\xi)-q(z)}{m(\xi)}\bigg)(\zeta(z+\xi)-\zeta(z))\frac{k(\xi)}{m(\xi)}\, d\xi \, dz. $$

To see this,
let $\eta_1(z,\xi) := (q(z+\xi)-q(z))/m(\xi)$
and $\eta_2(z,\xi):= (\zeta(z+\xi) -\zeta(z))/m(\xi)$.
We only need to find a function in $L^1(\R)$ that dominates the quantity
\begin{equation}\label{eq:dominated}
\frac1{\eps} \left( V(\eta_1+\eps\eta_2) - V(\eta_1)\right)
%\frac{1}{\eps}\bigg(V\bigg(
%\frac{q(z+\xi) - q(z) +\eps\zeta(z+\xi) - \eps\zeta(z)}{m(\xi)}
%\bigg) - V\bigg( \frac{q(z+\xi)-q(z)}{m(\xi)}\bigg)\bigg)
\end{equation}
to be able to pass the derivative inside.
Note that $\eta_1(\cdot,\xi)$ and $\eta_2(\cdot,\xi)$ are uniformly bounded
in $L^2(\R)$, because, for example, by Cauchy-Schwarz,
\[
\int_\R |\zeta(z+\xi)-\zeta(z)|^2 \,dz \le  \int_\R \xi\int_0^\xi \zeta'(z+s)^2\,ds\,dz
= 2\xi^2 T(\zeta).
\]
Because $|V'(t)|\le C_1|t|$ whenever $|t|\le \|\eta_1\|_\infty+\eps \|\eta_2\|_\infty$,
where $C_1$ is a local bound for $V''$, we then find that whenever $|\eps|\le1$,
\begin{align*}
|V(\eta_1+\eps\eta_2)-V(\eta_1)| =
|V(|\eta_1+\eps\eta_2|)-V(|\eta_1|)|
%\le V'(|\eta_1|+|\eta_2|) \eps|\eta_2|
\le C_1(|\eta_1|+|\eta_2|) \eps|\eta_2| \,.
\end{align*}
%
%and let $$C_1:= \sup_{[-\Vert [q]_{\ell}\Vert_\infty - \Vert [\zeta]_{\ell}\Vert_\infty,\Vert [q]_{\ell}\Vert_\infty + \Vert [\zeta]_{\ell}\Vert_\infty]} \vert V''(t)\vert.$$ We have that
%\begin{align*}
%    & \bigg\vert V\bigg(\eta_1(z,\xi) +\eps\eta_2(z,\xi) \bigg) - V\bigg(\eta_1(z,\xi) \bigg)\bigg\vert  \\
%    = & \bigg\vert \int_{\eta_1(z,\xi)}^{\eta_1(z,\xi)+\eps\eta_2(z,\xi)} V'(t)dt\bigg\vert \\
%    \le & C_1\bigg\vert \int_{\eta_1(z,\xi)}^{\eta_1(z,\xi)+\eps\eta_2(z,\xi)}\vert t\vert dt\bigg\vert \\
%    = & C_1\vert (\eta_1(z,\xi)+\eps\eta_2(z,\xi))^2 - (\eta_1(z,\xi))^2\vert \\
%    \le & C \eps|\eta_2(z,\xi)|(|\eta_1(z,\xi)| + |\eta_2(z,\xi)|).
%\end{align*}
Thus, (\ref{eq:dominated}) is dominated by
%$ C|\eta_2(z,\xi)|(|\eta_1(z,\xi)| + |\eta_2(z,\xi)|)
$C(|\eta_1|+|\eta_2|)|\eta_2| \in L^1(\R)$.

Similarly, \begin{equation}
		\frac{\partial\Psi}{\partial t}(0,0) = \int_{\R}\int_{\ell}^{\delta} V'(\frac{q(z+\xi)-q(z)}{m(\xi)})(q(z+\xi)-q(z))\frac{k(\xi)}{m(\xi)} \, d\xi \, dz.
\end{equation}
We also have that, since $V(x)$ is strictly increasing for $x>0$ and strictly decreasing for $x<0$, $V'(x)x>0$ for $x\not=0$. Therefore,  $\frac{\partial \Phi}{\partial t}(0,0) >0$ and so
\begin{equation}
		\nabla \Psi(0,0)\not=0.
\end{equation}

On the other hand, it is easy to see that
$$\frac{\partial \Phi}{\partial \eps}(0,0) = \int_{\R} q'\zeta' \,,\qquad
\frac{\partial \Phi}{\partial t}(0,0) = \int_{\R} (q')^2.$$

We rewrite the problem in terms of minimizing $\Phi$ under the constraint $\Psi=K$. By the hypothesis, we know that $\Phi(0,0)$ is a minimizer of $\Phi$ under the constraint. Because of $\nabla\Psi(0,0)\not=0$, we can apply the Lagrange multiplier rule to deduce that there exists a $\lambda$ such that $(0,0)$ is a critical point of
$\Phi-\lambda\Psi$.
%$$\Phi(t,\eps)-\lambda\Psi(t,\eps).$$
That means
$$0=\nabla \Phi(0,0) -\lambda\nabla\Psi(0,0)$$
In particular, we have
\begin{gather}
	\int_\R \bigg[ q'\zeta' - \lambda\int_{\ell}^{\delta} V'(\frac{q(z+\xi)-q(z)}{m(\xi)})(\zeta(z+\xi)-\zeta(z))\frac{k(\xi)}{m(\xi)} \, d\xi \bigg]dz=0 \label{distribution_der}\\
	\int_\R\bigg[ (q')^2 - \lambda \int_{\ell}^{\delta} V'(\frac{q(z+\xi)-q(z)}{m(\xi)})(q(z+\xi)-q(z))\frac{k(\xi)}{m(\xi)} \, d\xi \bigg]dz=0. \label{distribution_second}
\end{gather}

Equation (\ref{distribution_second}) tells us that $\lambda$ is greater than 0 and independent of $\zeta$.
Rewriting (\ref{distribution_der}), for any $\zeta \in C_c^\infty(\R)$ we have
\begin{align*}
		\int_{\R} q'\zeta' dz %= & \lambda \int_\R \int_{\ell}^\delta V'(\frac{q(z+\xi)-q(z)}{\xi})(\zeta(z+\xi)-\zeta(z))d\xi \\
		%=& \lambda \int_\R \int_{\ell}^\delta \bigg[V'(\frac{q(z) - q(z-\xi)}{\xi}) - V'(\frac{q(z+\xi)-q(z)}{\xi}) \bigg]\zeta(z) d\xi dz \\
		= \lambda \int_\R \int_{\ell}^\delta \bigg[f(q(z) - q(z-\xi),\xi) - f(q(z+\xi)-q(z),\xi) \bigg] d\xi \, \zeta(z) \, dz.
\end{align*}

 So, $-\lambda \int_{\ell}^\delta \bigg[f(q(z) - q(z-\xi),\xi) - f(q(z+\xi)-q(z),\xi) \bigg] d\xi $ is the distributional second derivative of $q$. But since $\lambda U'(q(z))$ is continuous, $q''$ exists in classical sense and is continuous. So, $q\in C^2(\R)$. This means that $q$ solves (\ref{eq:travelling-wave-cutoff}) where $c=\lambda^{-1/2}$.
\end{proof}

\subsection{Monotone travelling wave in the approximate problem \label{mon-existence}}
\begin{lemma}\label{lem:monotone}
		Let $V$ be as above. Then, any minimizer $q$ of $T$ over $\cA_K$ is monotone.
		%$$V(x)= \frac{x^2}{2} + \frac{\vert x\vert ^3}{3}.$$
\end{lemma}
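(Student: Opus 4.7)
My plan is to argue by contradiction via a symmetric rearrangement of $q'$. Suppose $q\in\cA_K$ minimizes $T$ but fails to be monotone, so both $A^+:=\{q'>0\}$ and $A^-:=\{q'<0\}$ have positive Lebesgue measure. I would introduce the rearranged function $\tilde q(z):=\int_0^z |q'(s)|\,ds$, which lies in $H$, is nondecreasing, and satisfies $T(\tilde q)=T(q)$ since $(\tilde q')^2=(q')^2$ pointwise. The elementary bound
\[
|\tilde q(z+\xi)-\tilde q(z)|=\int_z^{z+\xi}|q'|\,ds\ge |q(z+\xi)-q(z)|,
\]
together with $\overline V$ being even and strictly increasing on $[0,\infty)$ (via strict convexity and $\overline V'(0)=0$), yields $\Eell(\tilde q)\ge \Eell(q)=K$, with equality at a bond $(z,z+\xi)$ iff $q'$ does not change sign on $(z,z+\xi)$.

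The argument then splits into two cases. If $\Eell(\tilde q)>K$, continuity and monotonicity of $\lambda\mapsto\Eell(\lambda\tilde q)$ (it vanishes at $\lambda=0$ and strictly exceeds $K$ at $\lambda=1$) together with the intermediate value theorem provide a $\lambda_*\in(0,1)$ with $\Eell(\lambda_*\tilde q)=K$; then $T(\lambda_*\tilde q)=\lambda_*^2 T(q)<T(q)$ contradicts the minimality of $q$. Otherwise equality holds for a.e.\ $(z,\xi)\in\R\times[\ell,\delta]$, which forces the essential distance between $A^+$ and $A^-$ to strictly exceed $\delta$, since no bond of length in $[\ell,\delta]$ may meet both sign-regions on a set of positive measure.

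In that separated regime, I decompose $q=q_++q_-+\text{const}$ with $q_\pm':=q'\mathbf{1}_{A^\pm}$; because no bond of length in $[\ell,\delta]$ sees both $A^+$ and $A^-$, the bond integrand splits cleanly and yields
\[
\Eell(q)=\Eell(q_+)+\Eell(q_-),\qquad T(q)=T(q_+)+T(q_-).
\]
Positivity of $\overline V$ away from the origin forces $\alpha:=\Eell(q_+)\in(0,K)$, and the strict subadditivity from Lemma~\ref{lem:subadditivity} then produces the contradiction $T(q)=T(q_+)+T(q_-)\ge T_\alpha+T_{K-\alpha}>T_K=T(q)$. The main obstacle I anticipate is the ``equality'' dichotomy in the previous paragraph: verifying that vanishing of the pointwise energy gain on a full-measure family of bonds really does force separation by more than $\delta$. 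The strict convexity of $\overline V$ (highlighted in the remark preceding this lemma) is what drives this, since strict monotonicity of $\overline V$ on $[0,\infty)$ converts any genuine straddle of both sign-regions on a positive-measure set of bonds into a strict energy increase. A minor secondary point is that $\Eell(\tilde q)$ could a priori be infinite, but this does not disturb the scaling step since $\lambda\mapsto \Eell(\lambda\tilde q)$ is monotone increasing, vanishes at $\lambda=0$, and is continuous on $[0,1)$ by dominated convergence.
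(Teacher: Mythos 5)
Your first half (the rearrangement $\tilde q(z)=\int_0^z|q'|$, the inequality $\tilde q(z+\xi)-\tilde q(z)\ge|q(z+\xi)-q(z)|$, and the scaling dichotomy: either $\Eell(\tilde q)>K$ and $\lambda_*\tilde q$ beats $q$, or equality holds on essentially every bond, forcing the sign sets of $q'$ to be separated by at least $\delta$) is exactly the paper's argument. Where you genuinely diverge is the endgame. The paper stays self-contained: it excises the flat plateau of $\tilde q$ between the last decreasing and first increasing portion (the function $\tilde{\tilde q}$), shows by superadditivity/strict convexity of $\overline V$ (namely $\overline V(x+y)\ge \overline V(x)+\overline V(y)$ for $x,y\ge0$, strict unless one term vanishes) that this strictly increases $\Eell$ while leaving $T$ unchanged, and then rescales to contradict minimality. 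You instead split $q=q_++q_-$ along the sign sets, use the $\delta$-separation to decouple both $T$ and $\Eell$ across the two pieces, and invoke the strict subadditivity $T_K<T_\alpha+T_{K-\alpha}$ of Lemma~\ref{lem:subadditivity}. This is a valid and arguably cleaner route, but it buys less generality: (S) is only established for $K>2K_0$, so your proof covers the regime in which the paper actually uses the lemma, whereas the paper's plateau-excision argument applies to any minimizer at any $K$ for which one exists and needs neither (S) nor (E). Two small points to tighten: the equality case forces $\mathrm{dist}(A^+,A^-)\ge\delta$, not strictly greater than $\delta$ (your own argument only needs that no bond of length $\le\delta$ meets both sets, which does follow from $\mathrm{dist}\ge\delta$ once you use that $A^\pm$ are open, $q'$ being continuous by the Euler--Lagrange regularity); and your worry about $\Eell(\tilde q)=\infty$ is moot here, since $\ell>0$ keeps $m(\xi)\ge m(\ell)>0$ and the $L^\infty$ and $L^2$ bounds on $\tilde q(\cdot+\xi)-\tilde q(\cdot)$ coming from $T(\tilde q)=T(q)<\infty$ already give $\Eell(\tilde q)<\infty$.
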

\begin{proof}
		This proof is done by applying a scaling argument repeatedly.
		Let $q\in \cA_K$ be a minimizer of $T$. Consider the function
		$$\tilde{q}(z):= \int_0^z \vert q'(s)\vert\,ds.$$
		We claim that for all $z\in \R$ and $\xi\in [\ell,\delta]$,
\begin{equation}\label{e:tildeqdiff}
\tilde{q}(z+\xi)-\tilde{q}(z) = \vert q(z+\xi)-q(z)\vert.
\end{equation}
		We only need to show $\tilde{q}(z+\xi)-\tilde{q}(z) \le \vert q(z+\xi)-q(z)\vert$ since the other inequality is obvious by definition. Suppose there exists a $z_0$ and $\xi_0$ so that $$\tilde{\eta}(z_0,\xi_0):=\tilde{q}(z_0+\xi_0)-\tilde{q}(z_0) > \vert q(z_0+\xi_0)-q(z_0)\vert=:\eta(z_0,\xi_0).$$
		Since $\tilde{\eta}$ and $\eta$ are continuous, there exists a $\delta'>0$ such that on $A:=(z_0-\delta',z_0+\delta')\times(\xi_0-\delta',\xi_0+\delta')$,
		$$\tilde{\eta}(z,\xi) >\eta(z,\xi).$$
		Without loss of generality, assume that $\xi_0-\delta'>\ell$. Thus, on $A$, we have that
		$$V(\frac{\tilde{q}(z+\xi)-\tilde{q}(z)}{m(\xi)}) > V(\frac{q(z+\xi)-q(z)}{m(\xi)}). $$

		and, therefore,
		$$\Eell(\tilde{q})>\Eell(q).$$

		So, there exists $\lambda\in (0,1)$ such that
		$$\Eell(\lambda \tilde{q}) = \Eell(q)=K.$$
		But, we have
		$$T(\lambda \tilde{q}) = \lambda^2 T(\tilde{q})< T(\tilde{q}) =T(q),$$
		which contradicts the optimality of $q$.
	 This proves the claim \eqref{e:tildeqdiff}.

		Now, suppose $q$ is not monotone. Since $q\in C^2$, there exists $a,b\in \R$ such that $q'(a)<0$ and $q'(b)>0$.
Without loss we assume $a<b$. Since, for $z\in \R$ and $\xi\in[\ell,\delta]$,
		$$\int_{z}^{z+\xi} \vert q'\vert =\bigg\vert\int_{z}^{z+\xi}q' \bigg\vert, $$
on each interval $[z,z+\delta]$, either $q'\ge 0$ or $q'\le 0$. We infer that $b-a\ge \delta$.
Let
\[
z_1= \sup\{z\in(a,b):q'(z)<0\},\qquad
z_2 = \inf\{z\in(z_1,b):q'(z)>0\}.
\]
Then $z_2-z_1\ge\delta$ and $q'(z)=0$ for all $z\in(z_1,z_2)$.
%and for $z_1,z_2\in(a,b)$ such that $\vert z_1-z_2\vert\le \delta$, $q'(z)=0$ for $z\in(z_1,z_2)$. Choose such $z_1, z_2$ so that $\dist(\{z_j\}, \{z:q'(z)\not=0\})=0$ for both $j=1,2$.
		Define
		$$\tilde{\tilde{q}}(z):=\begin{cases}
			\tilde{q}(z), & z\le z_1 \\
			\tilde{q}(z+(z_2-z_1)), & z>z_1
		\end{cases}. $$
		(The basic idea is we want to get rid of this constant part of $\tilde{q}$.)
		We have that

		\begin{align*}
 &     \Eell(\tilde{\tilde{q}}) - \Eell(\tilde{q}) = \\
%\qquad &= \int_{\R} \int_{\ell}^\delta \bigg[V\bigg(\frac{\tilde{\tilde{q}}(z+\xi)-\tilde{\tilde{q}}(z)}{m(\xi)}\bigg) - V\bigg(\frac{\tilde{q}(z+\xi)-\tilde{q}(z)}{m(\xi)}\bigg)\bigg]k(\xi) d\xi\, dz\\
				&=\int_{\ell}^{\delta} \bigg[\int_\R V\bigg(\frac{\tilde{\tilde{q}}(z+\xi)-\tilde{\tilde{q}}(z)}{m(\xi)} \bigg) -V\bigg(\frac{\tilde{q}(z+\xi)-\tilde{q}(z)}{m(\xi)}\bigg) dz\bigg]k(\xi) \, d\xi \\
				&= \int_{\ell}^\delta \bigg[\int_{z_1-\xi}^{z_1} V\bigg(\frac{\tilde{q}(z+\xi+(z_2-z_1)) -\tilde{q}(z)}{m(\xi)} \bigg)dz \\
					&\hspace{1cm} -\int_{z_1-\xi}^{z_1} V\bigg(\frac{\tilde{q}(z_1)-\tilde{q}(z)}{m(\xi)} \bigg)dz -\int_{z_2-\xi}^{z_2} V\bigg(\frac{\tilde{q}(z+\xi)-\tilde{q}(z_1)}{m(\xi)}\bigg)dz\bigg]k(\xi) \, d\xi \\
				&= \int_{\ell}^\delta \int_{z_1-\xi}^{z_1}\bigg[
 V\bigg(\frac{\tilde{q}(z+\xi+(z_2-z_1)) -\tilde{q}(z)}{m(\xi)} \bigg)
%dz \\ &\hspace{1cm} -\int_{z_1-\xi}^{z_1}
%dz  -\int_{z_1-\xi}^{z_1}
\\ &\hspace{2cm}
- V\bigg(\frac{\tilde{q}(z_1)-\tilde{q}(z)}{m(\xi)} \bigg)
-V\bigg(\frac{\tilde{q}(z+\xi +(z_2-z_1))-\tilde{q}(z_1)}{m(\xi)} \bigg)\bigg]dz\,k(\xi)\, d\xi \\
				%&= \int_{\ell}^\delta \bigg[\int_{z_1-\xi}^{z_1} V\bigg(\frac{\tilde{q}(z+\xi+(z_2-z_1)) -\tilde{q}(z)}{m(\xi)} \bigg)dz \\
					 % &\hspace{1cm} -\int_{z_1-\xi}^{z_1} V\bigg(\frac{\tilde{q}(z_1)-\tilde{q}(z)}{m(\xi)} \bigg)dz - \int_{z_1-\xi}^{z_1} V\bigg(\frac{\tilde{q}(z+\xi +(z_2-z_1))-\tilde{q}(z_1)}{m(\xi)} \bigg)dz\bigg]k(\xi) d\xi \\
					&\ge 0.
		\end{align*}
		Here, due to the strict convexity of $V$, the equality occurs if and only if for all $(z,\xi)\in [z_1-\xi,z_1]\times[\ell,\delta]$ either $$r_1(z,\xi):=\tilde{q}(z_1)-\tilde{q}(z)=0$$ or $$r_2(z,\xi):=\tilde{q}(z+\xi+(z_2-z_1))-\tilde{q}(z_1)=0.$$

		By the way %our assumption that $q'(a)<0, q'(b)>0$ and the way
we define $z_1, z_2$, neither of these conditions can hold. Thus,
%both $r_1(z,\xi)$ and $r_2(z,\xi)$ are strictly positive on $[z_1-\xi,z_1]\times[\ell,\delta]$. Thus,
		$$\Eell(\tilde{\tilde{q}})> \Eell(\tilde{q}). $$
		Again, let $\lambda\in(0,1)$ so that $\Eell(\lambda\tilde{\tilde{q}})=\Eell(\tilde{q})=K.$ But then,
		$$T(\lambda \tilde{\tilde{q}})=\lambda^2 T(\tilde{\tilde{q}}) < T(\tilde{\tilde{q}})=T(\tilde{q})=T(q),$$
		which is a contradiction.
\end{proof}

\begin{proof}[Proof of Theorem~\ref{thm1}]
		Combine Lemma \ref{lem:monotone}, the Euler-Lagrange equation, and the argument in subsection \ref{sub:existence}, we achieve the desired result.
\end{proof}

\begin{remark}\label{sign} {\it If $q$ is a solution to the approximating symmetrized problem, then $-q$ is also a solution. We will assume $q$ to be monotone increasing to make the subsequence analysis notationally easier.}
\end{remark}

\section{Passing to the limit: proof of Theorem~\ref{thm2}} \label{limiting}

Now we turn our attention to step 3 in the strategy outlined in Subsection \ref{strategy}. Recall $\cE^{\ell}$ in \eqref{potential_energy} is the potential energy associated with
cutoff parameter $\ell$. Our goal is to extract a subsequential limit from the minimizers $q^\ell$ of Problem \ref{approximate}.

We need an improved version of Lemma \ref{lem:energy}. Recall $T^\ell_K$ is defined in \eqref{kinetic_energy_K}.
		\begin{lemma}\label{lem:energy_revisit}
				For some $\ell_0, K_0>0$, there exists a $C>0$ independent of $\ell$ (and $K_0$) so that for all $K>K_0$
and all $\ell\in(0,\ell_0)$,
				$$K - C> \Nell V''(0)T_K^{\ell}\,,
%\bigg(\int_\ell^\delta \frac{\xi^2 k(\xi)}{m(\xi)^2} d\xi\bigg)
\quad\mbox{where}\quad \Nell = \int_\ell^\delta \frac{\xi^2 k(\xi)}{m(\xi)^2}\, d\xi\,. $$
%for all $\ell\le \ell_0$.
		\end{lemma}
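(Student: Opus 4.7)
The plan is to sharpen the argument of Lemma \ref{lem:energy} by quantifying the strict inequality $V(x) > \tfrac{1}{2}V''(0)x^2$ coming from superquadraticity. I would use the same test function $q_{1,L}$ (so $\Lambda = 1$ is held fixed) and write $V(x) = \tfrac{1}{2}V''(0)x^2 + R(x)$, where $R \ge 0$ and $R(x) > 0$ for $x \ne 0$. Decomposing $\cE^\ell(q_{1,L})$ accordingly and retaining every quadratic correction yields the exact identity
\begin{align*}
\cE^\ell(q_{1,L}) - \Nell V''(0)\,T(q_{1,L})
&= \int_\ell^\delta (L-\xi)\,k(\xi)\,R\!\left(\tfrac{\xi}{m(\xi)}\right)\!d\xi \\
&\quad + 2\int_\ell^\delta k(\xi)\!\int_0^\xi R\!\left(\tfrac{t}{m(\xi)}\right)dt\,d\xi
 - \tfrac{V''(0)}{6}\,\widetilde{N}_\ell,
\end{align*}
where $\widetilde{N}_\ell := \int_\ell^\delta \xi^3 k(\xi)/m(\xi)^2\,d\xi$. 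The first two terms on the right are nonnegative, and only the third is potentially problematic as $\ell\to 0$.

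The next step is to establish uniform-in-$\ell$ bounds $\widetilde{N}_\ell \le \widetilde{N}_0$ and $A_\ell := \int_\ell^\delta k(\xi) V(\xi/m(\xi))\,d\xi \le A_0$ for $\ell \in (0,\delta/2)$. The bound on $\widetilde{N}_\ell$ follows from $m(\xi) \le m(\delta)$ together with (H2). For $A_\ell$ I would use the polynomial upper bound $V(x) \le \tfrac{1}{2}V''(0)x^2 + C_1|x|^{\gamma_1+2} + C_2|x|^{\gamma_2+2}$ obtained by integrating the hypothesis of (H1) twice; the $|x|^{\gamma_2+2}$-contribution is controlled directly by (H2) via $\xi^{\gamma_2} \le \delta^{\gamma_2}$, and the $|x|^{\gamma_1+2}$-contribution by H\"older interpolation between the finite quantities $\int_0^\delta \xi^2 k/m^2\,d\xi$ and $\int_0^\delta \xi^{\gamma_2+2} k/m^{\gamma_2+2}\,d\xi$. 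Convexity of $V$ on $[0,\infty)$ with $V(0)=V'(0)=0$ also gives $2\int_0^\xi V(t/m(\xi))\,dt \le \xi V(\xi/m(\xi))$, so writing $\cE^\ell(q_{1,L}) = L A_\ell + B_\ell$ yields $B_\ell \le 0$, and hence $L \ge K/A_\ell \ge K/A_0$ whenever $\cE^\ell(q_{1,L}) = K$.

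Finally, the dominant positive contribution to the deficit is extracted by restricting the first integral of the identity above to $\xi \in [\delta/2,\delta]$. Since $m$ is nondecreasing, $\xi/m(\xi) \ge \delta/(2m(\delta)) =: x_* > 0$ on this interval, and strict superquadraticity gives $R(\xi/m(\xi)) \ge R(x_*) =: r_* > 0$. With $\kappa_0 := \int_{\delta/2}^\delta k(\xi)\,d\xi > 0$, this yields
\[
\cE^\ell(q_{1,L}) - \Nell V''(0)\,T(q_{1,L}) \ge (L-\delta)\,r_* \kappa_0 - \tfrac{V''(0)}{6}\,\widetilde{N}_0.
\]
Fixing any $C > 0$, choosing $L_*$ so that $(L_* - \delta)\,r_*\kappa_0 - \tfrac{V''(0)}{6}\widetilde{N}_0 \ge C$, and then setting $K_0 := L_* A_0$, one gets $L \ge L_*$ whenever $K > K_0$ and $\ell \in (0,\delta/2)$, and therefore $\Nell V''(0) T_K^\ell \le \Nell V''(0)\,T(q_{1,L}) \le K - C$, as required. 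The main technical hurdle is the uniform integrability bound on $A_\ell$ as $\ell \to 0$ (since $\xi/m(\xi)$ may blow up), where the H\"older interpolation step is the most delicate point; once that is in hand, the remainder is a careful bookkeeping of constants.
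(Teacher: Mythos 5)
Your proposal is correct, but the mechanism by which you produce the uniform constant $C$ differs from the paper's. The paper also works with the ramp $q_{\Lambda,L}$, but it takes $C$ from the \emph{corner} contribution $g(\xi)=2k(\xi)\int_0^\xi V(\Lambda t/m(\xi))\,dt$ in \eqref{eq:qlambdal}: since $g\ge 0$ is independent of $L$ and of $\ell$, its integral $C_\ell=\int_\ell^\delta g$ is monotone in $\ell$, so $C_\ell\ge C_{\ell_0}>0$ for all $\ell<\ell_0$, and the bulk term is then handled exactly as in Lemma \ref{lem:energy} (the strict gap $V(x)>\tfrac12V''(0)x^2$ absorbs the $\xi^3k/(m^2L)$ correction for $L$ large, uniformly in $\ell$). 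You instead discard the corner term (via $B_\ell\le 0$, which is a correct consequence of convexity and $V(0)=0$) and extract $C$ from the bulk superquadratic excess $\int_{\delta/2}^\delta (L-\xi)k\,R(\xi/m)\,d\xi$, which grows linearly in $L$; this forces you to prove the uniform-in-$\ell$ bound $A_\ell\le A_0$ so that $\cE^\ell(q_{1,L})=K$ implies $L\ge K/A_0$. That bound is genuine extra work the paper never needs, though it is easier than you suggest: since $m$ is nondecreasing, $m(\xi)^{-\gamma_1}\le m(\delta)^{\gamma_2-\gamma_1}m(\xi)^{-\gamma_2}$, so the $\gamma_1$-term follows from (H\ref{A2}) directly (no H\"older interpolation is required), and indeed the paper uses exactly this observation later when asserting $\int_0^\delta g(1/m)\,\xi^2k/m^2\,d\xi<\infty$. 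What your route buys is an explicit, quantitative dependence ($C$ arbitrary, $K_0=L_*A_0$ computable); what the paper's route buys is brevity and no integrability issue near $\xi=0$. One small repair: your step $R(\xi/m(\xi))\ge R(x_*)$ tacitly uses monotonicity of $R$ on $[0,\infty)$, which follows from the scaling form of superquadraticity ($V(\lambda x)\ge\lambda^2V(x)$ for $\lambda\ge1$, as used in Lemma \ref{lem:subadditivity}); alternatively, note that for $\xi\in[\delta/2,\delta]$ the argument $\xi/m(\xi)$ stays in the compact set $[\delta/(2m(\delta)),\,\delta/m(\delta/2)]$, so one may take $r_*$ to be the minimum of $R$ there, which is positive by continuity and strict superquadraticity, with no monotonicity needed.
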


		\begin{proof}
				Fix $\lambda$. Let $L^{\ell} > 3\delta$. Recall from the argument following equation (\ref{eq:qlambdal}) that
						\begin{align*}
								\int_{\R} U(q_{\Lambda,L^{\ell}};\cdot)
&=\int_{\ell}^\delta \bigg[(L^{\ell}-\xi)V(\frac{\Lambda\xi}{m(\xi)})k(\xi) + g(\xi)  \bigg]d\xi\\
										&= \int_{\ell}^\delta \bigg[(L^{\ell}-\xi)V(\frac{\Lambda\xi}{m(\xi)})k(\xi)   \bigg]d\xi + C_\ell\,,
						\end{align*}
where $C_\ell>0$.
						By the way we choose $L^{\ell}$, $g(\xi)$ is independent of $\ell$ and therefore $C_\ell>C_{\ell_0}>0$ for $0<\ell<\ell_0$. Following the same analysis as in Lemma \ref{lem:energy}, we have that
						\begin{equation}
								\cE(q_{\Lambda, L^{\ell}}) - {C_{\ell_0}}> T(q_{\Lambda, L^\ell}) V''(0)\int_\ell^\delta \bigg[ \frac{\xi^2 k(\xi)}{m(\xi)^2} - \frac{\xi^3 k(\xi)}{m(\xi)^2L^{\ell}}\bigg] d\xi.
						\end{equation}
						Again, following the same analysis as in Lemma \ref{lem:energy}, our conclusion holds.
		\end{proof}

		 Pick $K_0$ as in Lemma \ref{lem:energy_revisit}. For small enough $\ell_0$, we have that for $K>2K_0$ and for all $\ell\in(0, \ell_0]$, there exists a minimizer $q^{\ell}$ of $T$ over the constraint $\cE^{\ell}(q)=K$.

\begin{proposition}
		For $0<\ell_1<\ell_2<\ell_0$, $T(q^{\ell_1})\le T(q^{\ell_2})$.
\end{proposition}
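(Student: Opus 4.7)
The plan is to compare $q^{\ell_2}$, a minimizer for the constraint $\cE^{\ell_2} = K$, against a rescaled version of itself that satisfies the (more restrictive) constraint $\cE^{\ell_1} = K$. The key observation is that since $\overline W \ge 0$, shrinking the cutoff parameter from $\ell_2$ to $\ell_1$ enlarges the range of integration, so
\[
\cE^{\ell_1}(q) \;=\; \int_\R\!\!\int_{\ell_1}^\delta \overline W\bigl(q(z+\xi)-q(z),\xi\bigr)\,d\xi\,dz \;\ge\; \cE^{\ell_2}(q)
\]
for every admissible $q$. Applied to $q = q^{\ell_2}$ this gives $\cE^{\ell_1}(q^{\ell_2}) \ge \cE^{\ell_2}(q^{\ell_2}) = K$.

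Next I would produce an admissible competitor for Problem~\ref{approximate} at level $\ell_1$ by scaling. Consider the map $\lambda \mapsto \cE^{\ell_1}(\lambda q^{\ell_2})$ on $[0,1]$. At $\lambda=0$ it equals $0$, and at $\lambda=1$ it is $\ge K$ by the observation above. Continuity in $\lambda$ follows from dominated convergence (the dominating function is built from the same Cauchy--Schwarz estimates used throughout Section~\ref{existence}: the difference quotients $(q^{\ell_2}(\cdot+\xi)-q^{\ell_2}(\cdot))/m(\xi)$ are uniformly controlled for $\xi\ge\ell_1>0$, and $V$ is $C^2$ with the growth bound in (H1)). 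By the intermediate value theorem there is some $\lambda \in (0,1]$ with $\cE^{\ell_1}(\lambda q^{\ell_2}) = K$. Thus $\lambda q^{\ell_2} \in \cA^{\ell_1}_K$.

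Now I use the fact that $T$ is homogeneous of degree two under scaling: $T(\lambda q^{\ell_2}) = \lambda^2 T(q^{\ell_2}) \le T(q^{\ell_2})$ since $\lambda \le 1$. Because $q^{\ell_1}$ minimizes $T$ over $\cA^{\ell_1}_K$,
\[
T(q^{\ell_1}) \;=\; T^{\ell_1}_K \;\le\; T(\lambda q^{\ell_2}) \;=\; \lambda^2 T(q^{\ell_2}) \;\le\; T(q^{\ell_2}),
\]
which is the desired inequality.

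The only nontrivial point is verifying that $\lambda q^{\ell_2}$ is truly an admissible competitor, i.e.\ that the scaled potential energy is finite and continuous in $\lambda$; this is where one must be a little careful, but it is entirely analogous to the scaling arguments already used in Lemma~\ref{mon} and Lemma~\ref{lem:subadditivity}, so it presents no new obstacle. Everything else is a one-line application of homogeneity of $T$ and minimality of $q^{\ell_1}$.
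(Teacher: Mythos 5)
Your proposal is correct and rests on the same key observation as the paper, namely that enlarging the range of integration gives $\cE^{\ell_1}(q^{\ell_2})\ge\cE^{\ell_2}(q^{\ell_2})=K$, so that $q^{\ell_2}$ (suitably compared) is admissible at level $\ell_1$. The only difference is cosmetic: where you rescale $q^{\ell_2}$ by $\lambda\in(0,1]$ via the intermediate value theorem and use $T(\lambda q)=\lambda^2 T(q)$, the paper simply sets $K^*:=\cE^{\ell_1}(q^{\ell_2})\ge K$ and cites the monotonicity of $K\mapsto T^{\ell_1}_K$ from Lemma~\ref{mon} --- whose proof is exactly the scaling argument you wrote out inline.
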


\begin{proof}
For each $\ell$, let $q^{\ell}$ be a minimizer of $T^{\ell}$ under the constraint $\cE^{\ell}(q) = K$.
Then
\begin{align*}
	\cE^{\ell_2}(q^{\ell_2}) &= \int_\R \int_{\ell_2}^\delta V(\frac{q^{\ell_2}(z+\xi) - q^{\ell_2}(z)}{m(\xi)}) k(\xi) \, d\xi\, dz \\
		&\le \int_\R \int_{\ell_1}^\delta V(\frac{q^{\ell_2}(z+\xi) - q^{\ell_2}(z)}{m(\xi)}) k(\xi) \, d\xi\, dz \\
        &= \cE^{\ell_1}(q^{\ell_2})=: K^*\ge K.
\end{align*}
Note that from Lemma \ref{mon}, we have that for each fixed $\ell$, $K\mapsto T_K^{\ell}$ is monotone increasing.
Therefore, $$T(q^{\ell_2}) \ge T_{K^*}^{\ell_1} \ge T_K^{\ell_1} = T(q^{\ell_1}).$$
\end{proof}
We infer that $\{q^{\ell}\}$ is a bounded sequence in $H$ and so there exists a subsequence $\{q^{\ell_i}\}$ that weakly converges to some $q\in H$. In particular, for all $\zeta\in H$,
$$\int_\R (q^{\ell_i})'\zeta' \xrightarrow{i\to\infty}\int_\R q' \zeta'.$$
Because $q^\ell(0)=0$ it is straightforward to infer that $\{q^{\ell_i}\}$ also converges
weakly  to $q$ in $W^{1,2}([a,b])$ for every $a\le b\in \R$.

We want next to show $q$ is a non-trivial minimizer of $T$ subject to $\cE^0(q)=K$.

We introduce some notation to make the following analysis a little more tractable. Write
$$g(x):= c_1\vert x\vert^{\gamma_1} + c_2\vert x\vert^{\gamma_2}$$ where $c_i, \gamma_i$ ($i\in \{1,2\}$) are from Hypothesis (H\ref{A1}). We note that, by Hypothesis (H\ref{A2}),
$$\int_0^\delta g\bigg(\frac{1}{m(\xi)}\bigg)\frac{\xi^2 k(\xi)}{m(\xi)^{2}} \, d\xi <\infty.$$

	\begin{proposition}\label{p:recenter}
		Up to re-centering and taking the limit of $\{q^{\ell_i}\}$ again, $q$ is non-trivial.
	\end{proposition}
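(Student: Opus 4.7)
The plan is to apply (an adaptation of) the concentration-compactness dichotomy of Lemma~\ref{concentration-compactness} to the non-negative measures $\mu_i := U^{\ell_i}(q^{\ell_i};\cdot)\,dz$, each of which has total mass $K$, while the $q^{\ell_i}$ stay bounded in $H$ (by the previous proposition). Since the dichotomy only uses the concentration function $Q_i(R)=\sup_y \mu_i(B_R(y))$ of scalar measures, which is insensitive to the precise form of $U^{\ell_i}$, the trichotomy carries through even as $\ell_i$ varies. Along a subsequence, exactly one of compactness, splitting, or vanishing occurs.

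The main obstacle is to rule out vanishing. I would argue that if $Q_i(R)\to 0$ for every fixed $R$, then monotonicity of $q^{\ell_i}$ combined with a localized lower bound on $U^{\ell_i}$ forces $\epsilon_i := \sup_z\bigl(q^{\ell_i}(z+\delta)-q^{\ell_i}(z)\bigr)\to 0$. Indeed, if along a subsequence $q^{\ell_i}(z_i+\delta)-q^{\ell_i}(z_i)\ge \epsilon_0$, one uses monotonicity together with a pigeonhole argument to find a point $z^\dagger$ and a fixed sub-interval $I\subset[\ell_0,\delta/2]$ (eventually valid once $\ell_i<\ell_0$) on which the difference quotient $(q^{\ell_i}(z^\dagger+\xi)-q^{\ell_i}(z^\dagger))/m(\xi)$ is bounded below by a constant independent of $i$, so that $U^{\ell_i}(z^\dagger)\ge V_0 K_0>0$ uniformly. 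Combined with the uniform Hölder-$\tfrac12$ bound on $q^{\ell_i}$ from the $H$-bound (which prevents $q^{\ell_i}$ from jumping on arbitrarily small scales), this pointwise bound transfers to a uniform lower bound on $\mu_i(B_{r_0}(z^\dagger))$ for some $r_0>0$ independent of $i$, contradicting vanishing.

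Granted $\epsilon_i\to 0$, monotonicity yields $\eta(z,\xi):=(q^{\ell_i}(z+\xi)-q^{\ell_i}(z))/m(\xi)\le \epsilon_i/m(\xi)$ for $\xi\in[\ell_i,\delta]$, together with the alternative pointwise bound $\eta\le M\sqrt{\xi}/m(\xi)$ coming from Cauchy-Schwarz and the $H$-bound. Using the growth estimate $V''(x)\le V''(0)+g(x)$ from (H\ref{A1}) with $g(x)=c_1|x|^{\gamma_1}+c_2|x|^{\gamma_2}$, and the integrability $\int_0^\delta \xi^2 k(\xi)/m(\xi)^{\gamma_j+2}\,d\xi<\infty$ ($j=1,2$) from (H\ref{A2}), a Taylor-expansion argument in the spirit of Lemma~\ref{ineq:liminf} yields
\[
K \;=\; \int_\R U^{\ell_i}(q^{\ell_i};z)\,dz \;\le\; \bigl(V''(0)+o(1)\bigr)\,N_{\ell_i}\, T(q^{\ell_i}) \quad\text{as }i\to\infty,
\]
with an error of order $\epsilon_i^{\gamma_1}+\epsilon_i^{\gamma_2}$. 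Combined with the opposite estimate $V''(0)\,N_{\ell_i}\,T(q^{\ell_i})\le K-C$ from Lemma~\ref{lem:energy_revisit}, with $C>0$ independent of $i$, the limit $i\to\infty$ gives $K\le K-C$, a contradiction. Hence vanishing cannot occur.

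In either remaining case there exist recentering points $y_i$ and a radius $R$ with $\mu_i(B_R(y_i))\ge K/2$ for all large $i$. Setting $\tilde q^{\ell_i}(\cdot):=q^{\ell_i}(\cdot+y_i)-q^{\ell_i}(y_i)$ and passing to a further weakly convergent subsequence $\tilde q^{\ell_i}\rightharpoonup \tilde q$ in $H$, the compact embedding $W^{1,2}(B_{R+\delta}(0))\hookrightarrow L^\infty(B_{R+\delta}(0))$ provides uniform convergence on $B_{R+\delta}(0)$; dominated convergence, using monotonicity of $\tilde q^{\ell_i}$ to keep the arguments of $V$ locally bounded, then gives $\int_{B_R(0)} U^0(\tilde q;z)\,dz\ge K/2>0$, so $\tilde q$ cannot vanish. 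The hardest step is the localized lower bound on $U^{\ell_i}$ in the vanishing argument, because the direct modulus-of-continuity constant from Lemma~\ref{unif_cont} degenerates as $\ell_i\to 0$; this must be replaced by an argument that uses monotonicity of $q^{\ell_i}$ and the Cauchy-Schwarz Hölder bound to produce a fixed-scale positive lower bound on the integrated mass rather than on pointwise values of $U^{\ell_i}$.
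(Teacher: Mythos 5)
Your proof is correct in substance, and its computational heart coincides with the paper's: the decisive inequality is exactly the chain $K=\int_\R U^{\ell_i}(q^{\ell_i};\cdot)\le V''(0)N_{\ell_i}T(q^{\ell_i})+g\bigl(\sup_z(q^{\ell_i}(z+\delta)-q^{\ell_i}(z))\bigr)\,T(q^{\ell_0})\int_0^\delta g(1/m(\xi))\,\xi^2k(\xi)/m(\xi)^2\,d\xi$, obtained from the Taylor bound in (H\ref{A1}), monotonicity, the submultiplicativity $g(ab)\le g(a)g(b)$ (this is where $c_1,c_2\ge1$ enters), and the integrability in (H\ref{A2}); played against Lemma \ref{lem:energy_revisit} it forces $\sup_z(q^{\ell_i}(z+\delta)-q^{\ell_i}(z))\ge C_*>0$ uniformly in $i$. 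Where you differ is the packaging: the paper uses this bound \emph{directly} --- recenter so that $q^{\ell_i}(\delta)-q^{\ell_i}(0)\ge C_*/2$, then local uniform convergence (from weak convergence in $H$ plus the compact embedding) immediately gives a non-constant limit --- whereas you route the argument through a concentration--compactness trichotomy for the measures $U^{\ell_i}(q^{\ell_i};\cdot)\,dz$, rule out vanishing by contradiction, and then recover non-triviality from a mass lower bound in a fixed ball. Your detour is legitimate (the trichotomy only uses the concentration functions, so varying $\ell_i$ is harmless), but it is redundant: once your energy estimate gives $\epsilon_i\ge C_*$, the pigeonhole/monotonicity step and the entire dichotomy can be dropped, and you avoid the extra work of passing the potential-energy density to the limit. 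Two details in your version need repair if you keep it: in the splitting case the retained mass in a fixed ball is only of size about $\alpha$, not $K/2$ (any fixed positive amount suffices, so this is cosmetic); more seriously, in the final step the phrase ``monotonicity keeps the arguments of $V$ locally bounded'' is false as stated --- the arguments $(\tilde q^{\ell_i}(z+\xi)-\tilde q^{\ell_i}(z))/m(\xi)$ blow up as $\xi\to0$ since only the numerator is controlled --- so the dominated-convergence step must instead split off $\xi\in(0,\xi_0)$ and use the uniform smallness of that tail coming from (H\ref{A1})--(H\ref{A2}) and the uniform bound on $T(q^{\ell_i})$ (the same estimate as above), with convergence applied only on $\xi\in[\xi_0,\delta]$. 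With those fixes your argument goes through; it simply proves more than is needed compared with the paper's direct recentering.
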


	\begin{proof}
		 % We begin with some generic computations. First,
		 % \begin{align*}
		 %     & \int_{\R} \int_{\ell}^\delta \frac{(q(z+\xi)-q(z))^3}{\xi^2} d\xi dz \\
		 %     \le & \int_{\R} \int_0^\delta \frac{(q(z+\xi)-q(z))^2}{\xi^2} (q(z+\xi)-q(z)) d\xi dz \\
		 %     \le & \int_{\R} (q(z+\delta)-q(z)) \int_0^\delta \frac{(q(z+\xi)-q(z))^2}{\xi^2} d\xi dz \tag{*}\\
		 %     = & \int_{\R} (q(z+\delta)-q(z)) \int_0^\delta \frac{(\int_0^1 q'(z+ \xi\tau)\xi d\tau)^2}{\xi^2} d\xi dz \\
		 %     \le & \sup_{z\in \R} (q(z+\delta) - q(z))\int_{\R}\int_0^\delta \int_0^1 q'(z+\xi\tau)^2 d\tau d\xi dz\\
		 %     = & \sup_{z\in \R} (q(z+\delta) - q(z)) \delta 2T(q).
		 % \end{align*}

			%Furthermore,
			%\begin{align*}
			%  \int_{\R}\int_{\ell}^\delta \frac{(q(z+\xi)-q(z))^2}{\xi} d\xi dz
			%  \le &   \int_{\R}\int_{\ell}^\delta \frac{(\int_0^\xi q'(z+\theta)d\theta)^2}{\xi} d\xi dz \\
			%  = & \int_{\R} \int_{\ell}^\delta \frac{(\int_0^1q'(z+\xi \tau)\xi d\tau)^2}{\xi} d\tau dz \\
			%  = & \int_{\R} \int_{\ell}^\delta \xi \bigg(\int_0^1 q'(z+\xi \tau)d\tau\bigg)^2 d\xi dz \\
			%  \le & \int_{\R} \int_{\ell}^\delta \xi \int_0^1 q'(z+\xi \tau)^2 d\tau d\xi dz \tag{**}\\
			%  = & (\delta^2 -\ell^2)T(q).
			%\end{align*}

			Combine Hypothesis (H\ref{A1}) and Lemma \ref{lem:energy_revisit}, we have that there exists $C>0$ so that, for every $\ell_i$,
			\begin{align*}
				K = & \int_\R \int_{\ell_i}^\delta V\bigg(\frac{q^{\ell_i}(z+\xi) - q^{\ell_i}(z)}{m(\xi)}\bigg) k(\xi) \, d\xi \, dz\\
						%\le& \int_\R \int_{\ell_i}^\delta \frac{1}{2} V''\bigg(\frac{q^{\ell_i}(z+\xi) - q^{\ell_i}(z)}{m(\xi)}\bigg)\bigg(\frac{q^{\ell_i}(z+\xi) - q^{\ell_i}(z)}{m(\xi)}\bigg)^2 k(\xi) d\xi \, dz  \\
						\le& \int_\R \int_{\ell_i}^\delta \frac{1}{2} \bigg(V''(0)+g\bigg(\frac{q^{\ell_i}(z+\xi) - q^{\ell_i}(z)}{m(\xi)}\bigg) \bigg)\bigg(\int_0^1 (q^{\ell_i})'(z+\xi s)^2 ds \bigg) \frac{\xi^2 k(\xi)}{m(\xi)^{2}} \, d\xi\, dz \\
						%\le &\frac{2}{3} \sup_{z\in \R} (q^{\ell_i}(z+\delta) - q^{\ell_i}(z)) \delta T(q^{\ell_i}) + K-C \\
						\le&\ V''(0)T(q^{\ell_i})\int_0^\delta \frac{\xi^2 k(\xi)}{m(\xi)^{2}} \, d\xi
						+\sup_{z\in \R} g(q^{\ell_i}(z+\delta) - q^{\ell_i}(z))
						T(q^{\ell_i})\int_0^\delta g\bigg(\frac{1}{m(\xi)}\bigg)\frac{\xi^2 k(\xi)}{m(\xi)^{2}} \, d\xi
\\
						\le&\  K-C+ \sup_{z\in \R} g(q^{\ell_i}(z+\delta) - q^{\ell_i}(z))  T(q^{\ell_0})\int_0^\delta g\bigg(\frac{1}{m(\xi)}\bigg)\frac{\xi^2 k(\xi)}{m(\xi)^{2}} d\xi
			\end{align*}
			Thus,
			\begin{equation}
				\sup_{z\in \R} (q^{\ell_i}(z+\delta)-q^{\ell_i}(z)) \ge  C_* >0
%\bigg( \frac{C}{N\int_0^\delta \frac{\xi^2 k(\xi)}{m(\xi)^{\alpha+2}} d\xi}\bigg)^{1/\gamma} >0
			\end{equation}
			where
\[
g(C_*) T(q^{\ell_0})\int_0^\delta g\bigg(\frac{1}{m(\xi)}\bigg)\frac{\xi^2 k(\xi)}{m(\xi)^{2}} d\xi \, > \frac{C}{2} >0.
\]
%$N = \sup_{\ell} T(q^{\ell})$.
%The RHS of the above is independent of $\ell$ and is strictly greater than 0.

%      So, there is a $\tilde{C}$ independent of $\ell_i$ so that
Now we can re-center $q^{\ell_i}$ to get
			$q^{\ell_i}(\delta)-q^{\ell_i}(0) > \frac12{C_*} >0$
			for all $\ell_i$. Take the weak limit again to arrive at a non-trivial limit.
	\end{proof}

We now need to show that $q$ satisfies (\ref{eq:limiting}) for some $c\not=0$.

		\begin{lemma}
				 For some $C>0$, $C\le\lambda_{\ell_i}\le T(q^{\ell_0})/K$ for all $i$.
		\end{lemma}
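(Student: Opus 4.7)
The plan is to read off the Lagrange multiplier from the identity~\eqref{distribution_second} evaluated at the test function $\zeta = q^{\ell_i}$, namely
\[
2T(q^{\ell_i}) \;=\; \lambda_{\ell_i}\!\int_\R\!\int_{\ell_i}^{\delta} V'(s)\,s\,k(\xi)\,d\xi\,dz, \qquad s := \frac{q^{\ell_i}(z+\xi)-q^{\ell_i}(z)}{m(\xi)},
\]
and to sandwich the double integral on the right between $K$ from below (yielding the upper bound on $\lambda_{\ell_i}$) and an $i$-uniform multiple of $T(q^{\ell_i})$ from above (yielding the lower bound).

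For the upper bound, convexity of $V$ with $V(0)=V'(0)=0$ gives the tangent-line inequality $V(s)\le sV'(s)$ for every $s\in\R$. Integrating against $k(\xi)\,d\xi\,dz$ and using $\cE^{\ell_i}(q^{\ell_i})=K$ yields $\int\!\int V'(s)s\,k(\xi)\,d\xi\,dz \ge K$, so $\lambda_{\ell_i}\le 2T(q^{\ell_i})/K\le 2T(q^{\ell_0})/K$ by the monotonicity $T(q^{\ell_i})\le T(q^{\ell_0})$ just established.

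The lower bound is the main step, and it is where Hypothesis~(H\ref{A1}) together with the monotonicity of $q^{\ell_i}$ (Lemma~\ref{lem:monotone}) come in. Integrating the estimate for $V''$ in~(H\ref{A1}) and using $V'(0)=0$ gives $|V'(s)|\le(V''(0)+g(s))|s|$ with $g(x)=c_1|x|^{\gamma_1}+c_2|x|^{\gamma_2}$ as introduced above, hence $V'(s)s\le (V''(0)+g(s))s^2$. The Cauchy-Schwarz argument from Lemma~\ref{ineq:liminf} applied to the $V''(0)s^2$ piece yields $\int\!\int s^2 k(\xi)\,d\xi\,dz\le 2\Nell T(q^{\ell_i})\le 2 N_0 T(q^{\ell_0})$ where $N_0:=\int_0^\delta \xi^2 k(\xi)/m(\xi)^2\,d\xi<\infty$ by~(H\ref{A2}). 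For the $g(s)s^2$ piece I would exploit monotonicity: assuming by Remark~\ref{sign} that $q^{\ell_i}$ is increasing, one has $0\le q^{\ell_i}(z+\xi)-q^{\ell_i}(z)\le M_i:=\sup_z\bigl(q^{\ell_i}(z+\delta)-q^{\ell_i}(z)\bigr)$ whenever $\xi\in[\ell_i,\delta]$, and since $g$ is non-decreasing this gives the pointwise bound $g(s)\le c_1 M_i^{\gamma_1}/m(\xi)^{\gamma_1}+c_2 M_i^{\gamma_2}/m(\xi)^{\gamma_2}$. Combining this with the same Cauchy-Schwarz estimate for $s^2$ and invoking~(H\ref{A2})---which, as in Proposition~\ref{p:recenter}, makes $A_j:=\int_0^\delta \xi^2 k(\xi)/m(\xi)^{\gamma_j+2}\,d\xi$ finite for $j=1,2$---yields $\int\!\int g(s)s^2 k(\xi)\,d\xi\,dz\le 2T(q^{\ell_i})(c_1 M_i^{\gamma_1}A_1+c_2 M_i^{\gamma_2}A_2)$. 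Finally, Cauchy-Schwarz gives $M_i\le\sqrt{2\delta T(q^{\ell_0})}$ uniformly in $i$, so collecting all terms yields $\int\!\int V'(s)s\,k(\xi)\,d\xi\,dz\le C''T(q^{\ell_i})$ with $C''$ independent of $i$, whence $\lambda_{\ell_i}\ge 2/C''>0$.

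The main obstacle is this lower bound, specifically the $\ell_i$-uniform control of the $g(s)s^2$ term. Without monotonicity, the best pointwise bound on $|s|$ available from finite kinetic energy is $\sqrt{\xi}\,\|(q^{\ell_i})'\|_{L^2}/m(\xi)$, and $\xi/m(\xi)^2$ typically blows up as $\xi\to 0^+$, so the resulting constant would degenerate with $\ell_i$. Monotonicity of the truncated minimizers replaces this by the tame bound $M_i/m(\xi)$, whose $\xi$-singularity is exactly compensated by the finiteness of $\int_0^\delta \xi^2 k(\xi)/m(\xi)^{\gamma_j+2}\,d\xi$ afforded by~(H\ref{A2}).
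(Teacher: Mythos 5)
Your route is the same as the paper's: read $\lambda_{\ell_i}$ off the identity \eqref{distribution_second} (which gives $\int_\R ((q^{\ell_i})')^2=\lambda_{\ell_i}\int_\R\int_{\ell_i}^\delta V'(s)s\,k(\xi)\,d\xi\,dz$), bound the double integral below by the constrained energy for the upper bound, and above by kinetic-energy quantities via (H\ref{A1}), (H\ref{A2}) and Cauchy--Schwarz for the lower bound. Your lower-bound argument is correct and in fact slightly cleaner than the paper's: because you bound the double integral by $C''\,T(q^{\ell_i})$, the kinetic energy cancels and you get $\lambda_{\ell_i}\ge 2/C''$ directly, whereas the paper bounds the double integral by an absolute constant and must additionally invoke the nontriviality of the weak limit $q$ to bound $T(q^{\ell_i})$ below. (One side remark: your closing claim that monotonicity is indispensable here is overstated --- Cauchy--Schwarz over an interval of length at most $\delta$ already gives $|q^{\ell_i}(z+\xi)-q^{\ell_i}(z)|\le\sqrt{2\delta T(q^{\ell_i})}$, hence $|s|\le\sqrt{2\delta T(q^{\ell_0})}/m(\xi)$, which is the same type of bound as $M_i/m(\xi)$; the $1/m(\xi)$ singularity is absorbed by (H\ref{A2}) either way.)

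The genuine gap is in the upper bound: the tangent-line inequality $V(s)\le sV'(s)$, which is all that convexity gives, yields only $\lambda_{\ell_i}\le 2T(q^{\ell_i})/K\le 2T(q^{\ell_0})/K$, because $\int_\R((q^{\ell_i})')^2=2T(q^{\ell_i})$. The lemma asserts $\lambda_{\ell_i}\le T(q^{\ell_0})/K$, and this sharper constant is not cosmetic: it is exactly what is used afterwards to conclude $c^2\ge K/T(q^{\ell_0})>c_0^2$. To recover the stated bound you must use the superquadraticity of the (symmetrized) $V$ rather than mere convexity: $V(\theta x)\ge\theta^2V(x)$ for $\theta\ge1$ means $V(x)/x^2$ is nondecreasing in $|x|$, which gives $sV'(s)\ge 2V(s)$, hence $\int_\R\int_{\ell_i}^\delta V'(s)s\,k\,d\xi\,dz\ge 2\Eell(q^{\ell_i})=2K$ and $\lambda_{\ell_i}\le 2T(q^{\ell_i})/(2K)\le T(q^{\ell_0})/K$. (The paper's own justification, ``Taylor theorem and monotonicity of $V$,'' is equally terse on this point, but superquadraticity is the hypothesis that supplies the needed factor $2$.)
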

		\begin{proof}
		To see the lower bound on $\lambda_i$, we note that $\int ({q^{\ell_i}}')^2$ is decreasing as $\ell_i\to 0$ and that $q$ is non-trivial so that
		$$\liminf_{i\to\infty} \int({q^{\ell_i}}')^2 \ge \int (q')^2 >0.$$

By similar estimates as in the proof of Proposition~\ref{p:recenter}, we find
\begin{align*}
				 \int_\R \int_{\ell_i}^\delta V'
\bigg(\frac{q^{\ell_i}(z+\xi) - q^{\ell_i}(z)}{m(\xi)}\bigg)
\bigg(\frac{q^{\ell_i}(z+\xi) - q^{\ell_i}(z)}{m(\xi)}\bigg)
k(\xi) d\xi \, dz\le C,
\end{align*}
where $C<\infty$ is independent of $i$.
The lower bound on $\lambda_{\ell_i}$ follows by the last two estimates
and the identity (\ref{distribution_second})
with $q$, $\lambda$ replaced by $q^{\ell_i}$, $\lambda_{\ell_i}$.

	To prove the upper bound on $\lambda_{\ell_i}$, we note that, by Taylor theorem and monotonicity of $V$, we have
	$$V'(\frac{q(z+\xi)-q(z)}{m(\xi)})(q(z+\xi)-q(z)) \ge V(\frac{q(z+\xi)-q(z)}{m(\xi)}).$$
The integral over $\xi$ and $z$ gives $K$ on the right hand side, so
	combining this with \eqref{distribution_second}, we get the desired bound.
		\end{proof}
		Due to the bounds in the last lemma, there exists a $\lambda_0>0$ so that, up to a subsequence, $\lim \lambda_i = \lambda_0$.

		We now want to show that the limiting function $q$ satisfies equation~\eqref{eq:limiting}. We need to look at (\ref{distribution_der}) for the proof of this one.

		\begin{proposition}
The limit $q$ is a solution to equation (\ref{eq:limiting})
with $c^2=1/\lambda_0$. Furthermore, $c^2 > c_0^2 = V''(0) \int_0^\delta \frac{k(\xi)\xi^2}{m(\xi)^2} \, d\xi$.
		\end{proposition}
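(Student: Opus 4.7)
The plan is to pass to the limit $i\to\infty$ in the distributional Euler-Lagrange identity \eqref{distribution_der} for $q^{\ell_i}$ with multiplier $\lambda_{\ell_i}$, and then compare $c^2=1/\lambda_0$ with $c_0^2$. Fix $\zeta\in C_c^\infty(\R)$. The left-hand side $\int_\R(q^{\ell_i})'\zeta'$ converges to $\int_\R q'\zeta'$ by weak convergence in $H$, and since $\lambda_{\ell_i}\to\lambda_0>0$ it remains to prove the convergence
\[
\int_\R\!\int_{\ell_i}^\delta V'(\eta^{\ell_i})\bigl(\zeta(z+\xi)-\zeta(z)\bigr)\tfrac{k(\xi)}{m(\xi)}\,d\xi\,dz\;\longrightarrow\;\int_\R\!\int_0^\delta V'(\eta)\bigl(\zeta(z+\xi)-\zeta(z)\bigr)\tfrac{k(\xi)}{m(\xi)}\,d\xi\,dz,
\]
where $\eta^{\ell_i}(z,\xi)=(q^{\ell_i}(z+\xi)-q^{\ell_i}(z))/m(\xi)$ and analogously for $\eta$. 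Pointwise a.e.\ convergence of the integrand follows from the compact Sobolev embedding $H\hookrightarrow L^\infty_\loc$ (hence $q^{\ell_i}\to q$ locally uniformly and $\eta^{\ell_i}\to\eta$ pointwise) together with the continuity of $V'$.

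The crucial step is the construction of an $L^1$ dominator. Since $\zeta$ has compact support, $z\mapsto\zeta(z+\xi)-\zeta(z)$ is supported in a common compact set $\mathcal{S}$ for every $\xi\in[0,\delta]$; on $\mathcal{S}$, the monotonicity of $q^{\ell_i}$ (Lemma \ref{lem:monotone}) combined with locally uniform convergence yields $|q^{\ell_i}(z+\xi)-q^{\ell_i}(z)|\le M$ uniformly in $i$, hence $|\eta^{\ell_i}(z,\xi)|\le M/m(\xi)$. Hypothesis (H\ref{A1}) gives $|V'(\eta)|\le|\eta|(V''(0)+g(|\eta|))$ with $g(x)=c_1|x|^{\gamma_1}+c_2|x|^{\gamma_2}$. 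Rather than the pointwise bound $|\zeta(z+\xi)-\zeta(z)|\le\|\zeta'\|_\infty\xi$ (which leads to the integral $\int_0^\delta\xi\, k(\xi)/m(\xi)^{2+\gamma_j}\,d\xi$, not controlled by (H\ref{A2})), I will integrate in $z$ first using the Cauchy-Schwarz estimates $\|q^{\ell_i}(\cdot+\xi)-q^{\ell_i}(\cdot)\|_{L^2}\le\xi\sqrt{2T(q^{\ell_0})}$ and $\|\zeta(\cdot+\xi)-\zeta(\cdot)\|_{L^2}\le\xi\|\zeta'\|_{L^2}$, trading the single $\xi$-factor for two. Peeling off the nonlinear tail by the pointwise bound $|\eta^{\ell_i}|\le M/m(\xi)$ then yields
\[
\int_\R|V'(\eta^{\ell_i})(\zeta(z+\xi)-\zeta(z))|\,dz\;\tfrac{k(\xi)}{m(\xi)}\;\le\;C\Bigl(\tfrac{\xi^2 k(\xi)}{m(\xi)^2}+\tfrac{\xi^2 k(\xi)}{m(\xi)^{2+\gamma_1}}+\tfrac{\xi^2 k(\xi)}{m(\xi)^{2+\gamma_2}}\Bigr),
\]
which belongs to $L^1((0,\delta),d\xi)$ by (H\ref{A2}) (the $\gamma_1$ integral is controlled by the $\gamma_2$ one because $m$ is bounded on $[0,\delta]$). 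Fubini together with dominated convergence in $\xi$ completes the passage to the limit, and the bootstrap argument already used in Section \ref{E-L} upgrades the distributional identity to a classical $C^2$ solution of \eqref{eq:limiting}.

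For the strict inequality $c^2>c_0^2$, I combine three ingredients. First, Lemma \ref{lem:energy_revisit} supplies a constant $C>0$ independent of $\ell$ with $\Nell V''(0)T(q^\ell)<K-C$; passing to the limit using $\Nell\uparrow N_0$ (monotone convergence) and weak lower semicontinuity of $T$ yields $T(q)\le(K-C)/(N_0 V''(0))<K/(N_0 V''(0))$, strictly. Second, taking the variation $\zeta=q$ (via approximation by compactly supported variations) in the Euler-Lagrange identity produces the virial relation $2T(q)=\lambda_0\int_\R\!\int_0^\delta V'(\eta)\eta\,k(\xi)\,d\xi\,dz$, so $c^2=\int V'(\eta)\eta\,k/(2T(q))$. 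Third, superquadraticity of $V$ in the multiplicative form $V(\lambda s)\ge\lambda^2 V(s)$ for $\lambda\ge1$ (verified directly for Silling's symmetrized $V$) implies the Ambrosetti-Rabinowitz inequality $V'(s)s\ge 2V(s)$, whence $\int V'(\eta)\eta\,k\ge 2K$. Combining, $c^2\ge K/T(q)>V''(0) N_0\cdot K/(K-C)>V''(0) N_0=c_0^2$.

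The main obstacle will be the dominated-convergence step, whose success hinges on the delicate interplay between the monotonicity-driven pointwise bound $|\eta^{\ell_i}|\le M/m(\xi)$, the polynomial envelope of $V'$ from (H\ref{A1}), and the singular weight $k(\xi)/m(\xi)$ near $\xi=0$ that is controlled by (H\ref{A2}) only with the extra $\xi^2$-factor obtained by integrating in $z$ first. A secondary point is whether the hypotheses actually deliver the AR form $V'(s)s\ge 2V(s)$; if only the additive form $V(s)>\tfrac12V''(0)s^2$ is in hand, the factor-of-two gap in $c^2>c_0^2$ may need a separate argument exploiting, for example, strictness in the Cauchy-Schwarz bound $\int\eta^2 k<2T(q)N_0$ for nontrivial monotone $q$.
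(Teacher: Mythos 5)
Your passage to the limit in the Euler--Lagrange identity is essentially the paper's own argument: you land on the same dominating function $C\,\frac{\xi^2 k(\xi)}{m(\xi)^2}\bigl(V''(0)+g(C'/m(\xi))\bigr)$, integrable by (H\ref{A2}), the only difference being how the two factors of $\xi$ are produced (you apply Cauchy--Schwarz in $z$ to both difference quotients; the paper keeps the pointwise bound $|\zeta(z+\xi)-\zeta(z)|\le C\xi$, writes $q^{\ell_i}(z+\xi)-q^{\ell_i}(z)=\xi\int_0^1 (q^{\ell_i})'(z+s\xi)\,ds$, and then uses Cauchy--Schwarz on the $z$-integral over the compact window --- so your parenthetical claim that the pointwise $\zeta$-bound cannot be made to work is not quite right, but your own estimate is correct). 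That half of the proposal is fine, including the bootstrap to $C^2$.

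The proof of $c^2>c_0^2$, however, has a genuine gap. The step ``$\int\!\int V'(\eta)\eta\,k \ge 2K$'' uses $\cE^0(q)\ge K$, which is simply not known at this point: weak convergence plus Fatou only yields $\cE^0(q)\le K$, and ruling out loss of potential energy as $\ell_i\to0$ is exactly the content of the subsequent minimality proposition and of the claim $\cE(q)=K$ in Theorem~\ref{thm2}, so it cannot be invoked here. A secondary, repairable issue is the virial identity $2T(q)=\lambda_0\int\!\int V'(\eta)\eta\,k$: you take $\zeta=q$, which is not an admissible compactly supported test function, and the cutoff/limiting argument (or a passage to the limit in \eqref{distribution_second}) is asserted rather than carried out. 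The paper sidesteps both problems by never leaving the truncated level, where $\cE^{\ell_i}(q^{\ell_i})=K$ holds by construction: the preceding lemma gives the multiplier bound $\lambda_{\ell_i}\le T(q^{\ell_i})/K$ directly from \eqref{distribution_second} and superquadraticity, while Lemma~\ref{lem:energy_revisit} gives $N_{\ell_i}V''(0)\,T(q^{\ell_i})<K-C$ with $C>0$ independent of $\ell$; letting $i\to\infty$ yields $\lambda_0\le (K-C)/\bigl(K\,N_0\,V''(0)\bigr)$, i.e.\ $c^2\ge K\,N_0\,V''(0)/(K-C)>c_0^2$, with no information about $\cE^0(q)$ needed. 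To salvage your limit-level route you would have to first prove $\cE^0(q)\ge K$ (nontrivial at this stage), or else transfer your three ingredients to the $q^{\ell_i}$ as above; your fallback remark about strict Cauchy--Schwarz does not address this, since the missing input is the value of the limiting potential energy, not the factor of two in the Ambrosetti--Rabinowitz inequality (which does hold for the multiplicative form of superquadraticity).
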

		\begin{proof}
	 Fix $\zeta\in C_c^\infty(\R)$ and let $M>0$ be such that $\supp (\zeta(z+\delta) -\zeta(z)) \subseteq (-M,M)$. Since $\zeta$ is smooth and compactly supported, $|\zeta(z+\xi) -\zeta(z)| \le C\xi$ for some $C>0$. By the fact that $q^{\ell_i}$ weakly converges to $q$ in $H$, we have
		\begin{align*}
			\int_{\R} q'(z)\zeta'(z) dz =& \lim_{i\to\infty} \int_\R  {q^{\ell_i}}'(z)\zeta'(z)\, dz\\
				=& \lim_{i\to\infty}\lambda_{\ell_i} \int_{\ell_i}^{\delta} \bigg[\int_{-M}^M V'(\frac{q^{\ell_i}(z+\xi)-q^{\ell_i}(z)}{m(\xi)})(\zeta(z+\xi)-\zeta(z)) \frac{k(\xi)}{m(\xi)}\, dz \bigg]d\xi \\
						=&\ \lambda_0\int_{\R}\bigg[\int_0^\delta V'(\frac{q(z+\xi)-q(z)}{m(\xi)})(\zeta(z+\xi)-\zeta(z)) \frac{k(\xi)}{m(\xi)} \, d\xi \bigg]dz.
		\end{align*}
		The last equality holds by the dominated convergence theorem, since for each $\xi$, we have
		\begin{align*}
			&\int_{-M}^M V'(\frac{q^{\ell_i}(z+\xi) - q^{\ell_i}(z)}{m(\xi)})|\zeta(z+\xi)-\zeta(z)| \frac{k(\xi)}{m(\xi)} \, dz \\
%       \le & \int_{-M}^M V'(\frac{q^{\ell_i}(z+\xi) - q^{\ell_i}(z)}{m(\xi)})(C\xi)\frac{k(\xi)}{m(\xi)} dz \\
			 \le &\ C\int_{-M}^M \bigg(V''(0) + g\bigg( \frac{q^{\ell_i}(z+\xi) - q^{\ell_i}(z)}{m(\xi)}\bigg)\bigg) \bigg(\frac{\xi \cdot \int_0^1 (q^{\ell_i})'(z+s\xi) ds}{m(\xi)}\bigg)\frac{\xi k(\xi)}{m(\xi)^2} \, dz\\
			% = &\ \bigg(V''(0) + \bigg\vert \frac{(\delta T(q^{\ell_0}))^{1/2}}{m(\xi)}\bigg\vert^{\gamma}\bigg) \frac{\xi^2\, k(\xi)}{m(\xi)^2} \int_0^1 \int_{-M}^M (q^{\ell_i})'(z+s\xi) dz\,ds \\
			 \le &\ C\bigg(V''(0) + g\bigg( \frac{(\delta T(q^{\ell_0}))^{1/2}}{m(\xi)}\bigg)\bigg) \frac{\xi^2 \, k(\xi)}{m(\xi)^2} \int_0^1 \int_{-M-\delta}^{M+\delta} (q^{\ell_i})'(\hat{z}) \, d\hat{z}\,ds \\
			 \le &\ C\bigg(V''(0) + g\bigg( \frac{(\delta T(q^{\ell_0}))^{1/2}}{m(\xi)}\bigg)\bigg) \frac{\xi^2\, k(\xi)}{m(\xi)^2} (2(M+\delta))^{1/2} (T(q^{\ell_0}))^{1/2} \\
			 = &\ C_1 \frac{\xi^2 \, k(\xi)}{m(\xi)^2} + C_1 g\bigg( \frac{C_2}{m(\xi)}\bigg)\frac{\xi^2 \, k(\xi)}{m(\xi)^{2}},
		\end{align*}
and this is integrable over $\xi\in(0,\delta)$ by Hypothesis~(H\ref{A2}).

To see that $c^2 > c_0^2$, we notice that, by the previous lemma, $1/c^2=\lambda_0 \le T(q^{\ell_0})/K$. Therefore, by Lemma \ref{lem:energy_revisit}, 
\begin{equation*}
    c^2 \ge \frac{K}{T(q^{\ell_0})} > N_\ell V''(0) = V''(0)\int_0^\delta \frac{\xi^2 k(\xi)}{m(\xi)^2}\, d\xi = c_0^2.
\end{equation*}
	\end{proof}  
    
		\begin{proposition}
			$q$ is a minimizer of problem \ref{generalization} with the symmetrized potential $\overline{W}$
		\end{proposition}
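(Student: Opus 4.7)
To show $q$ is a minimizer of Problem~\ref{generalization} with symmetrized micropotential $\overline{W}$, I need to establish (a) the constraint $\cE^0(q)=K$ and (b) the optimality $T(q) \le T(p)$ for every $p\in H$ with $\cE^0(p)=K$.

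For (b), the plan is a rescaling argument at the level of the truncated problems. Given admissible $p$, monotone convergence yields $\cE^{\ell_i}(p) \nearrow K$, and by strict superquadraticity and symmetry of $\overline{V}$ the map $\mu \mapsto \cE^{\ell_i}(\mu p)$ is strictly increasing with $\cE^{\ell_i}(\mu p) \to \infty$ as $\mu\to\infty$, so there is a unique $\mu_i\ge 1$ with $\cE^{\ell_i}(\mu_i p)=K$, and $\mu_i\to 1$. Since $\mu_i p$ is a competitor in Problem~\ref{approximate}, the optimality of $q^{\ell_i}$ gives $\mu_i^2 T(p) \ge T(q^{\ell_i})$; passing to the limit along the subsequence $\ell_i \to 0$ and invoking weak lower semicontinuity of $T$ along $q^{\ell_i}\rightharpoonup q$ yields $T(p)\ge T(q)$.

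For (a), the bound $\cE^0(q)\le K$ is immediate from Fatou's lemma combined with the pointwise a.e.\ convergence $q^{\ell_i}\to q$, which holds after passing to a further subsequence by Helly's theorem (the $q^{\ell_i}$ are monotone and uniformly bounded on bounded intervals). For the reverse, I would decompose
\[
    \cE^0(q^{\ell_i}) = K + R_i, \qquad R_i := \int_\R\!\!\int_0^{\ell_i}\overline{V}\!\Bigl(\tfrac{q^{\ell_i}(z+\xi)-q^{\ell_i}(z)}{m(\xi)}\Bigr) k(\xi)\,d\xi\,dz,
\]
and show both (i) $R_i\to 0$ and (ii) $\cE^0(q^{\ell_i})\to \cE^0(q)$. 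The tools are the same in both: Hypothesis~(H\ref{A1}) gives the envelope $\overline{V}(x)\le \tfrac12(V''(0)+g(x))x^2$ with $g(x)=c_1|x|^{\gamma_1}+c_2|x|^{\gamma_2}$; monotonicity of $q^{\ell_i}$, Cauchy-Schwarz, and the uniform $H$-bound yield the pointwise bound $|q^{\ell_i}(z+\xi)-q^{\ell_i}(z)|\le M:=\sqrt{2\delta\,T(q^{\ell_0})}$ for $\xi\in(0,\delta)$; and integrating gives the moment estimate $\int_\R(q^{\ell_i}(z+\xi)-q^{\ell_i}(z))^2\,dz \le 2\xi^2 T(q^{\ell_0})$. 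Together these lead to
\[
    R_i \le T(q^{\ell_0})\int_0^{\ell_i}\!\Bigl[V''(0)+g\!\bigl(M/m(\xi)\bigr)\Bigr]\frac{\xi^2 k(\xi)}{m(\xi)^2}\,d\xi,
\]
whose right side vanishes as $\ell_i\to 0$ by Hypothesis~(H\ref{A2}), since the integrals $\int_0^\delta \xi^2 k(\xi)/m(\xi)^{\gamma_j+2}\,d\xi$ are finite and their $(0,\ell_i)$-tails therefore tend to zero. The same envelope furnishes an integrable $\xi$-majorant uniform in $i$; combined with $L^\infty_{\loc}$ convergence of $q^{\ell_i}$ (Sobolev compactness) and dominated convergence, this produces $\cE^0(q^{\ell_i})\to \cE^0(q)$, hence $\cE^0(q)=K$.

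The main obstacle will be controlling $R_i$ uniformly in $i$---precisely the difficulty flagged in the introduction that the difference quotients $(q(\cdot+\xi)-q(\cdot))/m(\xi)$ need not remain bounded as $\xi\to 0$. The resolution hinges on monotonicity of $q^{\ell_i}$ (for the uniform sup-bound $M$) together with Hypothesis~(H\ref{A2}) (for the vanishing of the relevant integral tails).
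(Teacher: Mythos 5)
Your optimality half (part (b)) is correct and takes a genuinely different route from the paper. The paper argues by contradiction: given a hypothetical better competitor $p$ with $\cE(p)=K$ and $T(p)<T(q)$, it first replaces $p$ by a function whose derivative has compact support, then glues a long shallow ramp $q_{\Lambda,L}$ beyond the support to restore the truncation deficit $\eps_1=K-\cE^\ell(p)$, and uses the asymptotics of Lemma \ref{ineq:liminf} (equation \eqref{limit_T}) to make the added kinetic energy $T(q_{\Lambda,L(\Lambda)})\approx \eps_1/(\Nell V''(0))$ smaller than $T(q)-T(p)$, contradicting minimality of $q^\ell$ in Problem \ref{approximate}. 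Your rescaling argument reaches the same inequality $T(p)\ge T(q)$ more directly: $\cE^{\ell_i}(p)\nearrow K$ by monotone convergence, superquadraticity gives $\cE^{\ell_i}(\mu_i p)\ge \mu_i^2\,\cE^{\ell_i}(p)$ and hence $\mu_i\to1$, and weak lower semicontinuity finishes; no compact-support reduction and no ramp asymptotics are needed. That is a clean and valid alternative.

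The gap is in part (a), at the step $\cE^0(q^{\ell_i})\to\cE^0(q)$. Your envelope $\overline V(x)\le \tfrac12\bigl(V''(0)+g(x)\bigr)x^2$, the uniform bound $M$, and Hypothesis (H\ref{A2}) produce a majorant that is integrable in $\xi$, uniformly in $i$ and $z$ --- and this does give $R_i\to0$, which is fine --- but it does not produce a majorant integrable in $z$ over all of $\R$, uniformly in $i$, and the convergence $q^{\ell_i}\to q$ is only in $L^\infty_{\loc}$. Dominated convergence therefore only yields $\int_{B_R}\int_0^\delta\to\int_{B_R}\int_0^\delta$ for each fixed $R$; nothing in your outline excludes a fixed portion of the potential energy of $q^{\ell_i}$ escaping to spatial infinity as $\ell_i\to0$, in which case $\cE^0(q)<K$. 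Monotonicity and the uniform kinetic bound do not rule this out, and the recentering of Proposition \ref{p:recenter} only pins one jump of size $C_*/2$ near the origin; what is needed is a tightness (concentration-compactness) statement for the family $\{q^{\ell_i}\}$ with varying $\ell_i$, or some other mechanism, and it is not supplied. Your Fatou inequality $\cE^0(q)\le K$ is correct, but the reverse inequality remains open as written. (Note that the paper's printed proof of this proposition only establishes the optimality half and does not verify the constraint $\cE^0(q)=K$ either, so you are attempting strictly more than the paper does here --- but the mechanism you propose for that extra step does not close.)
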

		\begin{proof}
			We proceed by contradiction. Suppose that $q$ is not a minimizer of problem \ref{generalization} with the symmetrized potential $\overline{W}$.
			This means, there exists a function $p\in H$ so that $\cE(p)= K$ and
				$$T(p) < T(q)\le T(q^\ell)$$ for all $\ell <\ell_0$.

%      {\it Case 1.} $\cE^\ell(p)= K$. But this means
%        $$T(p) < T(q^\ell)$$ which is a contradiction by definition of $q^\ell$.

%      {\it Case 2.}
Let $\eps_2=T(q)-T(p)>0$. Pick $\ell>0$ small enough,
			$$\frac{K-\cE^\ell(p)}{N_\ell V''(0)} < \eps_2.$$
			Let $\eps_1:=K-\cE^\ell(p) >0$.
			By the same argument as in remark \ref{technical}, we can assume that $p'$ has compact support. Then, let $x_0=2\delta+\sup (\supp p')$ and define a function $\overline{p}$ by
			\begin{equation*}
				\overline{p}(x):= \begin{cases}
						p(x), \hspace{3cm} x\le x_0, \\
						q_{\Lambda,L}(x-x_0) + p(x_0), \hspace{0.5cm} x> x_0,
				\end{cases}
			\end{equation*}
			where we pick $L=L(\Lambda)$ for small $\Lambda$ so that
$\cE^{\ell}(q_{\Lambda, L(\Lambda)}) =\eps_1$.
. Now, we compute
			\begin{align*}
					\cE^\ell(\overline{p}) &= \int_{\R} \int_\ell^\delta V\bigg(\frac{\overline{p}(z+\xi) - \overline{p}(z)}{m(\xi)} \bigg) k(\xi) \, d\xi\, dz = \cE^\ell(p) + \cE^\ell(q_{\Lambda,L(\Lambda)}) = K. \\
			\end{align*}
			Now, by the first equality in \eqref{limit_T} (with $K$ replaced by $\eps_1$), for sufficiently small $\Lambda$ we have %there exist $\Lambda$ and $L(\Lambda)$ so that
			$$ \frac{\eps_1}{N_\ell V''(0)}\approx T(q_{\Lambda, L(\Lambda)}) < \eps_2. $$
			But then,
			$$T(\overline{p}) = T(p)+ T(q_{\Lambda, L(\Lambda)}) < T(q^\ell),$$
			which is a contradiction.
		\end{proof}

		Now, by remark \ref{sign} both $q$ and $-q$ are solution to \eqref{eq:limiting}. So, if the original potential (which has not been symmetrized) is superquadratic on $[0,\infty)$, then $q$ would be a function that satisfies the corresponding (original) travelling wave equation since $q(z+\xi) - q(z)\ge 0$ (which belongs to the domain for the superquadratic portion of the potential).

		Similarly, if the original potential is superquadratic on $(-\infty, 0]$, $\tilde{q}=-q$ would be the function that satisfies the corresponding travelling wave equation since $\tilde{q}(z+\xi) - \tilde{q}(z) \le 0$.

		Combining everything in this subsection, we have proven Theorem~\ref{thm2}.

\section{Low energy \label{low-energy}}

In this section, we prove Theorem~\ref{thm:low-energy}, showing existence of travelling waves for Silling's model in \cite{S} with low potential energy.
(Recall from remark \ref{r:Silling}.
this corresponds to the choice $m(\xi)=\xi$, $k(\xi)=|\xi|$ with symmetrized potential
$V(s) = \frac12 s^2(1+\frac13|s|)$.)
This is done by revisiting the previous calculations with a different approximating function. Namely, instead of looking at the piecewise linear function, we follow \cite{F-W} to consider
$$q_{\Lambda, \beta}(z) := \frac\Lambda{\sqrt{\beta}} \tanh(\beta z).$$

With small $\beta>0$, this turns out to be a better approximation to the solution of the minimizer when the potential energy is low.

We have that
\begin{equation}\label{eq:kinetic_new}
		T(q_{\Lambda,\beta}) = \frac{1}{2} \Lambda^2\int_{\R} \sech^4(z)dz.
\end{equation}

Furthermore,
\begin{align*}
		\phi_{\Lambda,\beta}(z,\xi) & : = q_{\Lambda,\beta}(z+\frac{\xi}{2}) - q_{\Lambda,\beta}(z-\frac{\xi}{2}) = \frac\Lambda{\sqrt{\beta}} \bigg[\tanh(\beta(z+\frac{\xi}{2})) - \tanh(\beta(z-\frac{\xi}{2})) \bigg] \\
		&= \frac\Lambda{\sqrt{\beta}} \frac{2\tanh(\frac{\beta\xi}{2})\sech^2(\beta z)}{1-\tanh^2(\beta z)\tanh^2(\frac{\beta\xi}{2})} \\
		&= \frac\Lambda{\sqrt{\beta}} \bigg[\beta \xi - \frac{(\beta\xi)^3}{12} + O(\beta^5\xi^5)\bigg] \bigg[1 + \frac{1}{4}\beta^2\xi^2\tanh^2(\beta z) + O(\beta^4\xi^4) \bigg]\sech^2(\beta z) \\
		&= \frac\Lambda{\sqrt{\beta}} \bigg[\beta\xi + \frac{1}{4} (\beta\xi)^3\tanh^2(\beta z) - \frac{(\beta\xi)^3}{12} + O(\beta^5\xi^5) \bigg]\sech^2(\beta z).
\end{align*}

By a change of variable, we have that
\begin{align}
\nonumber
		\cE^{\ell}&(q_{\Lambda,\beta}) = \int_{\R} \int_{\ell}^\delta \bigg[\frac{1}{2}\bigg(\frac{\phi_{\Lambda,\beta}(z,\xi)}{\xi} \bigg)^2 + \frac{1}{6}\bigg(\frac{\phi_{\Lambda,\beta}(z,\xi)}{\xi} \bigg)^3 \bigg]\xi \,d\xi \,dz \\
\nonumber
		=&  \frac{\Lambda^2}{2\beta}\int_{\R}\int_{\ell}^\delta \bigg[ \beta\xi + \frac{1}{4}(\beta\xi)^3 \tanh^2(\beta z) - \frac{(\beta\xi)^3}{12} + O(\beta^5\xi^5) \bigg]^2\frac{\sech^4(\beta z)}{\xi} \,d\xi \,dz \\
\nonumber
		& + \frac{\Lambda^3}{6\beta^{3/2}}\int_{\R}\int_{\ell}^\delta \bigg[ \beta\xi + \frac{1}{4}(\beta\xi)^3 \tanh^2(\beta z) - \frac{(\beta\xi)^3}{12} + O(\beta^5\xi^5) \bigg]^3\frac{\sech^6(\beta z)}{\xi^2} \,d\xi \,dz \\
\nonumber
		= & \frac{\Lambda^2}{2\beta}\int_{\R}\int_{\ell}^\delta \bigg[ \beta \xi + 2(\beta \xi)^3(\frac14\tanh^2(z)-\frac{1}{12}) + O(\beta^5\xi^5) \bigg]\sech^4(z)\,d\xi \,dz \\
\nonumber
		&+ \frac{\Lambda^3}{6\beta^{3/2}} \int_{\R} \int_{\ell}^\delta \bigg[\beta^2 \xi + 3\beta^4\xi^3(\frac{1}{4}\tanh^2(z) -\frac{1}{12}) + O(\beta^8 \xi^7) \bigg]\sech^6(z) \,d\xi \,dz \\
\nonumber
		= & \frac{\Lambda^2}{2}\int_{\R} \bigg[\frac{1}{2}(\delta^2-\ell^2) + \frac{\beta^2}{4}(\delta^4 -\ell^4)(\frac{1}{2}\tanh^2(z)-\frac{1}{6}) + O(\delta^6\beta^4) \bigg]\sech^4(z) \,dz \\
\nonumber
		&+ \frac{\Lambda^3}{6} \int_{\R} \bigg[\frac{1}{2}(\delta^2-\ell^2)\beta^{1/2} + \frac{3}{4}\beta^{5/2} (\delta^4-\ell^4)(\frac{1}{4}\tanh^2(z)-\frac{1}{12}) + O(\delta^8\beta^{13/2}) \bigg]\sech^6(z)\,dz  \\
		&= C_1\frac{\Lambda^2}{4} (\delta^2 - \ell^2) + C_2 \frac{\Lambda^2\beta^2}{8}(\delta^4 - \ell^4)
		+ C_3\frac{\Lambda^3\beta^{1/2}}{12}(\delta^2-\ell^2) + O(\Lambda^2\beta^4+\Lambda^3\beta^{5/2}),  \label{e:asymptotic}
\end{align}
		where $C_1, C_3 >0$ and $C_2 <0$.
		 We choose $\Lambda_\ell(\beta)$ so that
		$$\cE^{\ell}(q_{\Lambda_{\ell}(\beta),\beta})=K.$$

		 Note that from \eqref{e:asymptotic}, we have
		$$\lim_{\beta\to 0} \Lambda_{\ell}(\beta) = \sqrt{\frac{4K}{C_1(\delta^2-\ell^2)}}=: M_\ell.$$

		Thus, we can pick $\beta$ small enough so that
%    $$\Lambda_{\ell}(\beta) \approx  \sqrt{\frac{4K}{C_1(\delta^2-\ell^2)}}$$
%    and therefore,
by \eqref{eq:kinetic_new},
		$$K =\frac{\delta^2 -\ell^2}{2} T(q_{\Lambda_\ell(\beta),\beta})
%+ C_2\frac{M^2\beta^2}{8}(\delta^4-\ell^2)
+ C_3\frac{M_\ell^3 \beta^{1/2}}{12}(\delta^2-\ell^2) + o(\beta^{1/2})$$

		%$\Lambda_{\ell}$ decreases as $\ell$ decreases and for $\ell=0$ we can solve for $\Lambda_0>0$ given $\beta$ is small enough.
		So, for any $K>0$, we can find a fixed $\beta>0$ so small that
		$$ K \ge \frac{\delta^2-\ell^2}{2}T(q_{\Lambda_{\ell},\beta}) +C(\beta),$$
		where $C(\beta)>0$ independent of $\ell$. This proves the inequality in Lemma \ref{lem:energy_revisit} for low energy, which is what we need to prove both existence of the minimizer to the approximate problem and to be able to pass to the limit using the similar argument in the previous section.

		\begin{remark}
				Taking $\beta$ small in the expression $\frac{\Lambda}{\sqrt{\beta}}\tanh(\beta z)$ here is similar to taking the length $L$ large in the piecewise linear approximating function in sections~\ref{properties} and \ref{limiting}. Recall that in Lemma~\ref{lem:energy_revisit} we needed to make $L$ large enough to create a constant $C$ independent of $\ell$ because the mass in the middle of the function is independent of $\ell$.
		\end{remark}

		\section{Discussion}\label{discussion}
		While with the $\tanh$ function in section \ref{low-energy}, one could establish existence of travelling waves in low potential energy setting, it is not clear that this argument would work for more general potentials, even when one could have existence for high potential energy. This is hinted in the work of \cite{F-W}---one needs to be more clever to obtain such result with more general potentials.

		An outstanding question is that about the compactness of the derivatives of the travelling waves. We answer this question by the following short argument.
		\\
		\begin{proposition}
				Let $q$ be a $C^2$ and increasing (resp. decreasing) function that solves \eqref{eq:limiting}. Suppose that $f(x,\xi)$ has a sign for $x\in (0,\infty)$ (resp. $x\in (-\infty,0)$). Then $\supp\{q'\}$ is unbounded above and below.
		\end{proposition}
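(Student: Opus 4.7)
The plan is to argue by contradiction, doing the increasing case first (the decreasing one is symmetric). Assume $q$ is increasing and that $M:=\sup\supp q'$ is finite. Since $q'\equiv0$ on $(M,\infty)$ and $q\in C^1$, $q$ is constant on $[M,\infty)$, say equal to $q_\infty$. A preliminary observation is that $q(w)<q_\infty$ for every $w<M$: if $q(w_0)=q_\infty$ for some $w_0<M$, monotonicity would force $q\equiv q_\infty$ on $[w_0,M]$ and hence $q'\equiv 0$ on $[w_0,\infty)$, giving $\sup\supp q'\le w_0<M$, a contradiction.

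Next, evaluate the travelling wave equation \eqref{eq:limiting} at an arbitrary $z\in(M,M+\delta)$. Since both $z$ and $z+\xi$ lie in $[M,\infty)$ for every $\xi\in(0,\delta)$, we have $q(z+\xi)-q(z)=0$ and hence $f(q(z+\xi)-q(z),\xi)=f(0,\xi)=0$ by the antisymmetry condition \eqref{eq:f_rule}. Moreover $q''(z)=0$ on $(M,\infty)$. The equation therefore collapses to
\[
0 \;=\; \int_0^\delta f\bigl(q(z)-q(z-\xi),\,\xi\bigr)\,d\xi.
\]
For $\xi\in(z-M,\delta)$ we have $z-\xi<M$, so by the preliminary observation $q(z)-q(z-\xi)=q_\infty-q(z-\xi)>0$. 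By hypothesis $f(\cdot,\xi)$ has a definite sign on $(0,\infty)$, and because $f(0,\xi)=0$ this sign is strict (under the structure \eqref{Wform} one has $f(\eta,\xi)=V'(\eta/m(\xi))k(\xi)/m(\xi)$, which is strictly positive on $\eta>0$ by the strict monotonicity of $V$ away from $0$). Hence the integrand is strictly positive on a set of positive $\xi$-measure, contradicting the vanishing of the integral.

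The unboundedness below is obtained by the mirror argument: if $m:=\inf\supp q'>-\infty$ then $q\equiv q_{-\infty}$ on $(-\infty,m]$ and, for $z\in(m-\delta,m)$, the only surviving term in \eqref{eq:limiting} is $\int_0^\delta f(q(z+\xi)-q(z),\xi)\,d\xi$, which is strictly positive on $\xi\in(m-z,\delta)$ by the same sign analysis. For the decreasing case one uses instead the hypothesis that $f(\cdot,\xi)$ has a sign on $(-\infty,0)$; the differences $q(z)-q(z-\xi)$ and $q(z+\xi)-q(z)$ that appear are nonpositive, and the rest of the argument is unchanged.

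The only real subtlety is ensuring that "has a sign" genuinely excludes $f(\cdot,\xi)$ vanishing on a nontrivial subset of its positive-argument domain; as noted above this is automatic under the assumptions on $V$ in (H\ref{A1}), but if one wished to weaken the sign hypothesis one would need a small additional argument that the zero set of $f(\cdot,\xi)$ cannot fill the relevant $\xi$-interval of positive measure.
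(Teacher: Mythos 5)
Your proof is correct and takes essentially the same route as the paper: argue by contradiction at a point just beyond the edge of $\supp q'$, where $q''$ and the forward-difference term vanish, so the remaining integral $\int_0^\delta f(q(z)-q(z-\xi),\xi)\,d\xi$ must vanish while being strictly signed on a set of positive measure (you in fact spell out both ends and the strict inequality $q(w)<q_\infty$, which the paper only asserts). One small correction: $f(0,\xi)=0$ does not follow from the antisymmetry \eqref{eq:f_rule} alone, which only gives $f(0,-\xi)=-f(0,\xi)$; it follows instead from the structural form $f(\eta,\xi)=V'(\eta/m(\xi))k(\xi)/m(\xi)$ with $V'(0)=0$, which you invoke anyway in your parenthetical.
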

		\begin{proof}
				We consider only the case $q$ is increasing and $f(x,\xi)>0$ for $x\in (0,\infty)$, $\xi\in(0,\delta)$. The other cases are similar.

				We argue by contradiction. Suppose that $q'$ has compact support. Let $z^+= \sup\{ \supp\{q'\}\}$. Consider $z = z^+ +\delta/2$. By \eqref{eq:limiting} and the assumption, we have
$q(z)<q(z^+)$ for $z<z^+$, hence
				\begin{align*}
						0 = c^2 q''(z) &= \int_0^\delta \bigg(f(q(z+\xi)-q(z),\xi) - f(q(z)-q(z-\xi),\xi)\bigg)  \, d\xi\\
							&= -\int_0^\delta f(q(z)-q(z-\xi),\xi)\, d\xi <0,
				\end{align*}
				which is a contradiction.
		\end{proof}

		While we know that the support of $q'$ is not compact, because of numerical evidence in \cite{S} (that the solutions with compact supports can approximate well the analytical solutions)  and the result of Friesecke and Pego for discrete lattices in \cite{F-P}, we conjecture that the solutions to \eqref{eq:limiting} decay exponentially to 0.

        Finally, we note that our approach is only one possible approach to prove existence of solitary waves in one dimensional peridynamics. Since there are quite a few results about the existence of solitary waves for Fermi-Pasta-Ulam-Tsingou's system, it would be worthwhile trying to adapt other approaches to peridynamics. We mention, in particular, the approach in \cite{herrmann_2010}, in which the author solves a constrained maximization problem and argues that certain maximizers are fixed points of a so-called {\it improvement operator} $\mathcal{T}$, which in turns solve the travelling wave equation. This approach is appealing since it provides good numerical results for Fermi-Pasta-Ulam-Tsingou's system and has a rather clean and interesting abstract framework. 

\section*{Acknowledgments}
		T.-S.V. would like to thank Gautam Iyer and Hung V. Tran for encouragement
		and useful comments.
		This material is based upon work partially supported by
		the National Science Foundation under grants
		DMS-1515400 and DMS-1812609, the Simons Foundation under grant \#395796,
		and the Center for Nonlinear Analysis, through NSF grant OISE-0967140.

\bibliography{bibliography}
\bibliographystyle{plain}
\end{document}